\documentclass[12pt,reqno]{amsart}
\usepackage{amssymb}

\newtheorem{Pa}{Paper}[section]
\newtheorem{theorem}[Pa]{{\bf Theorem}}
\newtheorem{lemma}[Pa]{{\bf Lemma}}
\newtheorem{corollary}[Pa]{{\bf Corollary}}
\newtheorem{remark}[Pa]{{\bf Remark}}
\newtheorem{proposition}[Pa]{{\bf Proposition}}

\newtheorem{definition}[Pa]{{\bf Definition}}

\newcommand{\PR}{{\bf NP}}

\newcommand{\cE}{{\mathcal E}}

\newcommand{\B}{{\mathbb B}}
\newcommand{\rH}{{\rm H}^2}

 \oddsidemargin 0.2in \evensidemargin
0.2in \topmargin -0.5in \textwidth 15.5truecm \textheight 23truecm

\begin{document}

\author[D. Alpay]{Daniel Alpay}
\address{(DA) Department of Mathematics\\
Ben-Gurion University of the Negev\\
Beer-Sheva 84105 Israel} \email{dany@math.bgu.ac.il}

\author[V. Bolotnikov]{Vladimir Bolotnikov}
\address{(VB)Department of Mathematics \\
The College of William and Mary\\
Williamsburg, VA 23187-8795\\
USA} \email{vladi@math.wm.edu}
\author[F. Colombo]{Fabrizio Colombo}
\address{(FC) Politecnico di
Milano\\Dipartimento di Matematica\\Via E. Bonardi, 9\\20133
Milano, Italy}
\email{fabrizio.colombo@polimi.it}
\author[I. Sabadini]{Irene Sabadini}
\address{(IS) Politecnico di
Milano\\Dipartimento di Matematica\\Via E. Bonardi, 9\\20133
Milano, Italy}
\email{irene.sabadini@polimi.it}

\title[Nevanlinna--Pick interpolation problem]{Self-mappings of the
quaternionic unit ball: multiplier properties, Schwarz-Pick inequality,  and
Nevanlinna--Pick interpolation problem}

\begin{abstract}
We study several aspects concerning slice regular functions
mapping the quaternionic open unit ball $\mathbb B$ into itself. We characterize these functions in
terms of their Taylor coefficients  at the origin and identify them as
contractive multipliers of the Hardy space ${\rm H}^2(\mathbb B)$. In addition, we formulate and solve
the Nevanlinna-Pick interpolation problem in the class of such functions presenting necessary and sufficient
conditions for the existence and for the uniqueness of a solution. Finally, we describe all
solutions to the problem in the indeterminate case.
\end{abstract}

\maketitle
\noindent {\it Mathematics Subject Classification 2010}: 30G35, 30E05.
\section{Introduction}
\setcounter{equation}{0}

Let $\mathbb H$ be the algebra of real quaternions
$p=x_0+ix_1+jx_2+kx_3$  where $x_\ell\in\mathbb R$ and $i$, $j$,
$k$ are imaginary units such that $ij=k$, $ki=j$, $jk=i$ and
$i^2=j^2=k^2=-1$.  The conjugate, the absolute value, the real
part and the imaginary part of a quaternion $p$   are defined as
$\bar p=x_0-ix_1-jx_2-kx_3$,
$|p|=\sqrt{x_0^2+x_1^2+x_2^2+x_3^2}$, ${\rm Re}\, p=x_0$ and ${\rm
Im}\, p=ix_1+jx_2+kx_3$, respectively By $\mathbb S$ we denote the
unit sphere of purely imaginary quaternions. Any $I\in\mathbb S$
is such that $I^2=-1$ so that the set ${\mathbb C}_I=\{x+Iy: \,
x,y\in{\mathbb R}\}$ can be identified with the complex plane. We
say that two quaternions $p$ and $q$ are {\em equivalent} if
$p=h^{-1}qh$ for some nonzero $h\in\mathbb H$. Two quaternions
$p$ and $q$ are equivalent if and only if ${\rm Re} \, p ={\rm
Re}\, q$ and $|{\rm Im}\, p|=|{\rm Im}\, q|$ so the set of all
quaternions equivalent to a given $p\in\mathbb H$ form a
$2$-sphere which will be denoted by $[p]$.

\smallskip

Since the algebra $\mathbb H$ is not commutative, function theory
over $\mathbb H$ is quite different from that over the complex
field. There are several notions of regularity for $\mathbb H$-valued functions. The most
notable are due to Moisil \cite{moisil}, Fueter \cite{fuet1, fuet2}, and Brackx, Delanghe, Sommen \cite{bds}.
More recently, upon refining and developing Cullen's approach \cite{cullen}, Gentili and Struppa introduced
in \cite{genstr} the notion of slice regularity which comprises quaternionic polynomials and power
series with quaternionic coefficients on one side. We recall it now.	
\begin{definition}
{\rm Given an open set $\Omega\subset\mathbb H$, a real differentiable function
$f: \, \Omega\to\mathbb H$ is called {\em left slice
regular} (or just {\em slice regular}, in what follows) on $\Omega$ if for every
$I\in\mathbb S$,
\begin{equation}
\frac{\partial}{\partial x}f_I(x+Iy)+I\frac{\partial}{\partial y}f_I(x+Iy)\equiv 0,
\label{1.1}
\end{equation}
where $f_I$ stands for the restriction of $f$ to $\Omega\cap\mathbb C_I$}.
\label{D:lr}
\end{definition}
We will denote by ${\mathcal R}(\Omega, \widetilde\Omega)$ the set of all
functions $f: \, \Omega\mapsto \widetilde\Omega\subset \mathbb H$ which are
(left) slice regular on $\Omega$ and we will write $\mathcal R(\Omega)$ in case
$\widetilde\Omega=\mathbb H$. It is clear that $\mathcal R(\Omega)$
is a right quaternionic vector
space. As was shown in \cite{genstr}, {\em the identity \eqref{3.1}
holds for a fixed $I\in\mathbb S$ if and only if for all
$J\in\mathbb S$ orthogonal to $I$, there exist complex-valued
holomorphic functions $F,G: \, \Omega\cap\mathbb C_I\to\mathbb C_I$
such that $f_I(z)=F(z)+G(z)J\; $ for all $\; z=x+yI\in
\Omega\cap\mathbb C_I$}.

\smallskip

The latter result called "the Splitting Lemma" clarifies the
relation between the restriction of slice regular functions to a complex plane and complex
holomorphy. It allows to get some of the analogs of basic principles of classical complex
analysis (e.g. the uniqueness theorem, the maximum-minimum modulus principle),
in the quaternionic setting. The theory of slice regular
functions is a very active and fast developing area of analysis;
we refer to recent books \cite{css, gss} and references therein.

\smallskip

The parallels with the classical complex analysis become even stronger if one
focuses on functions defined and slice regular on the unit ball $\mathbb B=\{p\in\mathbb H: \, |p|<1\}$.
 Similarly to the complex case, the functions $f\in \mathcal R(\mathbb B)$
admit power series expansion
\begin{equation}
f(p)=\sum_{k=0}^\infty p^kf_k\quad (f_k\in\mathbb H)
\label{1.3}
\end{equation}
where the series on the right converges to $f$ uniformly on compact subsets of $\mathbb B$; on the
other hand, if
$\limsup_k|f_k|^{\frac{1}{k}}\le 1$, the power series as in \eqref{1.3} converges absolutely on
$\mathbb B$ and represents a
slice regular function. We thus may identify the function from $\mathcal R(\mathbb B)$ with power
series of the form \eqref{1.3}
with radius of convergence at least one.

\smallskip

The prominent role played in the classical complex analysis by
analytic self-mappings of the open unit disk is well known. It is thus not surprising that the class
$\mathcal R(\mathbb B, \mathbb B)$ of slice regular self-mappings of the quaternionic unit ball $\mathbb B$
have already attracted much attention. A number of results in this direction
(e.g., M\"obius transformations, Schwarz Lemma, Bohr's inequality)
are presented in \cite[Chapter 9]{gss}. Among other results, we mention realizations for slice regular
functions \cite{acs1}, Schwarz-Pick Lemma \cite{bist}  and Blaschke products \cite{acs2}.

\smallskip

The present paper initiates the systematic study of interpolation theory in the class $\mathcal R(\mathbb B,
\mathbb B)$. In this context, it is convenient to extend the class
$\mathcal R(\mathbb B, \mathbb  B)$ by unimodular constant functions. By the maximum modulus principle,
this extended class
equals $\mathcal R(\mathbb B, \overline{\mathbb B})$, i.e., it consists of functions $f\in\mathcal R(\mathbb
B)$ such that $|f(p)|\le 1$ for all $p\in\mathbb B$. Although the unimodular constant case can be easily
singled out, the results for the extended class $\mathcal R(\mathbb B, \overline{\mathbb B})$ look more
uniform as we will see below.  Our first result
(the analog of the celebrated result of I. Schur \cite{Schur})
characterizes functions from  $\mathcal R(\mathbb B, \overline{\mathbb B})$ in terms of their
Taylor coefficients at the origin.
\begin{theorem}
Let $S$ be slice regular on $\mathbb B$ and let  ${\bf S}_n$ be the lower triangular
Toeplitz matrix given by
\begin{equation}
{\bf S}_n=\left[\begin{array}{cccc}S_{0} & 0 & \ldots & 0
\\ S_{1}& S_{0} & \ddots & \vdots \\ \vdots& \ddots & \ddots & 0
\\ S_{n}&  \ldots & S_{1} & S_{0}\end{array}\right],\quad\mbox{where}\quad S(p)=\sum_{k=0}^\infty
p^kS_k.
\label{1.4}
\end{equation}
The function $S$ belongs to $\mathcal R(\mathbb B, \overline{\mathbb B})$ if and only if the matrix ${\bf
I}_n-{\bf S}_n{\bf S}_n^*$  is positive semidefinite for all integers $n\ge 0$.
\label{T:1.1}
\end{theorem}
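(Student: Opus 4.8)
The plan is to reduce the quaternionic statement to a classical Schur-type criterion by exploiting the power series structure. First I would observe that because $S$ is slice regular on $\mathbb B$, it is determined by its Taylor coefficients $S_k\in\mathbb H$, and the condition $|S(p)|\le 1$ on $\mathbb B$ is what we must translate into positivity of ${\bf I}_n-{\bf S}_n{\bf S}_n^*$. The natural bridge is the Hardy space $\rH(\mathbb B)$ and the characterization of $\mathcal R(\mathbb B,\overline{\mathbb B})$ as the contractive multipliers of $\rH(\mathbb B)$ (announced in the abstract); the Toeplitz matrix ${\bf S}_n$ is exactly the compression of the multiplication operator $M_S$ to the subspace spanned by $1,p,\dots,p^n$, written in the orthonormal basis $\{p^k\}$. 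Thus ${\bf I}_n-{\bf S}_n{\bf S}_n^*\ge 0$ for all $n$ says precisely that $M_S$ is a contraction on the dense set of polynomials, hence on all of $\rH(\mathbb B)$, which is equivalent to $S\in\mathcal R(\mathbb B,\overline{\mathbb B})$.

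Concretely, the steps I would carry out are: (i) Fix $n$ and a quaternionic vector $c=(c_0,\dots,c_n)^T$; form the polynomial $q(p)=\sum_{k=0}^n p^k c_k$ and compute $\|q\|^2_{\rH}=\sum|c_k|^2=c^*c$ and $\|S\cdot q\|^2_{\rH}$, where the $*$-product (slice regular product) of $S$ and $q$ has coefficients given by the convolution $\sum_{j} S_{k-j}c_j$ — i.e. the $k$-th entry of ${\bf S}_n c$ for $k\le n$, plus higher-order terms. (ii) Show $\|S\cdot q\|^2_{\rH}\ge \|{\bf S}_n c\|^2 = c^*{\bf S}_n^*{\bf S}_n c$ since truncating a series only decreases the Hardy norm; hence if $S$ is a contractive multiplier then $c^*({\bf I}_n-{\bf S}_n^*{\bf S}_n)c\ge 0$, giving ${\bf I}_n-{\bf S}_n^*{\bf S}_n\ge 0$, which is equivalent (same nonzero spectrum) to ${\bf I}_n-{\bf S}_n{\bf S}_n^*\ge 0$. (iii) Conversely, if ${\bf I}_n-{\bf S}_n{\bf S}_n^*\ge 0$ for all $n$, then for any polynomial $q$ of degree $n$ the truncation of $S\cdot q$ to degree $n$ has norm $\le\|q\|$; but one must also control the tail. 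Here I would use a limiting/normal-families argument: apply the estimate to the dilated functions $S(rp)$, $r<1$, whose coefficients are $r^kS_k$ and whose associated matrices are still contractive, deduce $|S(rp)|\le 1$ pointwise from a Cauchy–Schwarz/reproducing-kernel estimate in $\rH(\mathbb B)$, and let $r\to 1$.

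The main obstacle I anticipate is step (iii): passing from the family of finite matrix inequalities to the global bound $|S(p)|\le 1$. In the commutative (Schur) setting one evaluates the reproducing kernel at a point and gets $|S(z)|\le 1$ directly; over $\mathbb H$ the evaluation of a $*$-product is twisted (one has $(f\cdot g)(p)=f(p)\,g(f(p)^{-1}pf(p))$ when $f(p)\ne 0$), so the pointwise estimate is less transparent. I would handle this by working with the $\rH(\mathbb B)$-inner product and its reproducing kernel $k(p,q)=\sum_k p^k\bar q^k=(1-p\bar q\cdots)^{-*}$ rather than with pointwise products: contractivity of all compressions of $M_S$ is a genuinely basis-level (hence coordinate-free) statement, so the equivalence "${\bf I}_n-{\bf S}_n{\bf S}_n^*\ge 0\ \forall n \iff M_S$ contraction" is really linear algebra on $\ell^2(\mathbb H)$, and the only quaternion-specific input needed is that the norm on $\rH(\mathbb B)$ is genuinely given by $\sum|f_k|^2$ and that the $*$-product's coefficient sequence is the Cauchy product — both of which follow from the Splitting Lemma and the definition of $\rH(\mathbb B)$. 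A secondary technical point is the switch between ${\bf S}_n{\bf S}_n^*$ and ${\bf S}_n^*{\bf S}_n$: these have the same nonzero eigenvalues, so one direction of the positivity is automatic, but since ${\bf S}_n$ is square the equivalence ${\bf I}_n-{\bf S}_n{\bf S}_n^*\ge0\iff {\bf I}_n-{\bf S}_n^*{\bf S}_n\ge0$ holds outright, and I would state it as such to match the theorem.
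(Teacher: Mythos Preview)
Your overall strategy matches the paper's: both reduce Theorem~\ref{T:1.1} to the contractive-multiplier characterization of $\mathcal R(\mathbb B,\overline{\mathbb B})$ on $\rH(\mathbb B)$ and then compare the multiplication operator on polynomials with the Toeplitz matrices $\mathbf S_n$. Two points, however, separate your proposal from a complete proof.

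First, the paper works with the \emph{adjoint} $M_S^*$ rather than with $M_S$. Since $M_S^*$ sends $p^k$ to $\sum_{j=0}^k p^j\overline{S}_{k-j}$, it preserves the space of polynomials of degree at most $n$, and one obtains for $f(p)=\sum_{k=0}^n p^k f_k$ the \emph{exact identity}
\[
\|f\|_{\rH}^2-\|M_S^*f\|_{\rH}^2=\mathbf f^*\bigl(\mathbf I_{n+1}-\mathbf S_n\mathbf S_n^*\bigr)\mathbf f,
\]
which yields both directions of $(2)\Leftrightarrow(4)$ at once, with $\mathbf S_n\mathbf S_n^*$ appearing directly and no tail to control. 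Your route through $M_S$ works too, but the dilation detour you propose is unnecessary and its key claim (that the Toeplitz matrices of $S(rp)$ remain contractive) is not obvious over $\mathbb H$, since the usual Poisson-averaging trick relies on commutativity. The simple fix is this: for a polynomial $q$ of degree $N$ with coefficient vector $c$, the first $M{+}1$ coefficients of $S\star q$ are exactly $\mathbf S_M\tilde c$ (with $\tilde c$ the zero-padding of $c$), so $\|\mathbf S_M\tilde c\|^2\le\|c\|^2$ for every $M\ge N$ already gives $\|M_S q\|_{\rH}\le\|q\|_{\rH}$ in the limit.

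Second, you invoke the equivalence ``$|S|\le 1$ on $\mathbb B$ $\Leftrightarrow$ $M_S$ is a contraction on $\rH$'' as a black box. In the paper this is proved in the same theorem as the Toeplitz criterion, and the substantive direction $(1)\Rightarrow(2)$ uses the pointwise formula $(f\star S)(p)=f(p)\,S(f(p)^{-1}pf(p))$ together with the integral expression for the $\rH$-norm, followed by a slice-conjugate trick to pass from $\|f\star S\|\le\|f\|$ to $\|S\star f\|\le\|f\|$. Your proposal does not address this step, so as written it reduces Theorem~\ref{T:1.1} to a statement of the same depth rather than proving it.
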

Here and in what follows, the symbol ${\bf I}_n$ denotes the $n\times n$ identity matrix.
The notions of adjoint matrices, of  Hermitian matrices, of
positive semidefinite and positive definite matrices over $\mathbb H$ are similar to those over
$\mathbb C$.

\smallskip

The Hardy space ${\rm H}^2(\mathbb B)$ of slice regular square summable power series has been recently
introduced in \cite{acs2}. Our second objective is to identify the class $\mathcal R(\mathbb B,
\overline{\mathbb B})$  with the unit ball of the multiplier algebra of ${\rm H}^2(\mathbb B)$. This in turn
will enable us to apply operator-theoretic tools to solve the quaternionic version of the Nevanlinna-Pick
interpolation problem (we refer to \cite{pick, nevan1} for the classical origins) which is
the main objective of the present paper and which is formulated as follows.

\medskip

$\PR$: {\em Given $n$ distinct points $p_1,\ldots,p_n\in \mathbb B$ and given $n$ target values
$s_1,\ldots, s_n\in{\mathbb H}$, find a function  $S\in\mathcal R(\mathbb B, \overline{\mathbb B})$ such that
\begin{equation}
S(p_i)=s_{i}\quad\quad{for}\quad i=1,\ldots,n.
\label{1.5}
\end{equation}}
The interpolation problem is called {\em determinate} if it has a
unique solution. By the convexity of the solution set, the
indeterminate problem always has infinitely many solutions.
The standard questions arising in any interpolation context are:
\begin{enumerate}
\item Find necessary and sufficient conditions for the problem to have a solution (the solvability
criterion).
\item Find necessary and sufficient conditions for the problem to have a unique solution (the
determinacy criterion).
\item Describe all solutions in the indeterminate case.
\end{enumerate}
As in the classical case the answers for these questions can be given in terms of the
{\em Pick matrix} $P$ of the problem which we define from the interpolation data as follows:
\begin{equation}
P=\left[\sum_{k=0}^\infty p_i^k(1-s_i\bar s_j)\bar p_j^k\right]_{i,j=1}^n.
\label{1.6}
\end{equation}
We remark that infinite series in \eqref{1.6} converge absolutely since $|p_i|<1$ for all
$i=1,\ldots,n$ and
that the diagonal entries of $P$ are equal to
\begin{equation}
P_{ii}=\frac{1-|s_i|^2}{1-|p_i|^2}\quad\mbox{for}\quad i=1,\ldots,n.
\label{1.7}
\end{equation}
Our next result is the following analog of the classical Nevanlinna-Pick theorem.
\begin{theorem}
The problem $\PR$ has a solution if and only if the Pick matrix $P$ is positive semidefinite.
\label{T:1.2}
\end{theorem}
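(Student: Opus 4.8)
The plan is to follow the operator-theoretic route that the paper has set up. By the second stated objective, $\mathcal R(\mathbb B,\overline{\mathbb B})$ is the closed unit ball of the multiplier algebra of $\rH(\mathbb B)$, so a function $S\in\mathcal R(\mathbb B,\overline{\mathbb B})$ with $S(p_i)=s_i$ exists precisely when there is a contractive multiplier $M_S$ on $\rH(\mathbb B)$ solving the interpolation conditions. The first step is therefore to identify the reproducing kernel of $\rH(\mathbb B)$: since $\rH(\mathbb B)$ consists of power series $\sum p^k f_k$ with $\sum|f_k|^2<\infty$, its kernel should be $k(p,q)=\sum_{k=0}^\infty p^k\bar q^k$, the quaternionic Szeg\H{o} kernel, and one checks that point evaluation at $q\in\mathbb B$ is the bounded functional implemented by $k(\cdot,q)$. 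The crucial algebraic identity is that, for any $S$ slice regular, $M_S^*$ acts on kernel functions by $M_S^* k(\cdot,q)=k(\cdot,q)\overline{S(q)}$ (with the appropriate quaternionic ordering); this is the quaternionic analog of the classical $M_\varphi^*k_w=\overline{\varphi(w)}k_w$ and will need the slice-regular product structure to be verified carefully.

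The second step is the necessity direction. If $S$ solves $\PR$, then $M_S$ is a contraction, so $\mathbf I-M_SM_S^*\succeq 0$, and evaluating the quadratic form of this operator against an arbitrary finite linear combination $\sum_j k(\cdot,p_j)c_j$ of kernel functions at the interpolation nodes gives
\begin{equation}
\sum_{i,j}\bar c_i\,\big\langle (\mathbf I-M_SM_S^*)k(\cdot,p_j),k(\cdot,p_i)\big\rangle\,c_j
=\sum_{i,j}\bar c_i\Big(\sum_{k=0}^\infty p_i^k(1-s_is^{-}_j)\bar p_j^k\Big)c_j\ge 0,
\end{equation}
using $M_S^*k(\cdot,p_j)=k(\cdot,p_j)\bar s_j$ and $S(p_i)=s_i$; here one must be attentive to the noncommutativity, keeping all scalars on the correct side so that the middle expression is exactly the quadratic form of the Pick matrix $P$ in \eqref{1.6}. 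Hence $P\succeq 0$. One small point to nail down is that $\langle k(\cdot,p_j),k(\cdot,p_i)\rangle=k(p_i,p_j)=\sum_k p_i^k\bar p_j^k$, which matches \eqref{1.7} on the diagonal.

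The third step, sufficiency, is the main obstacle. Assuming $P\succeq 0$, one must produce a contractive multiplier interpolating the data. The cleanest approach is a commutant-lifting / Arveson-type extension argument adapted to the quaternionic setting: on the finite-dimensional right $\mathbb H$-module spanned by $\{k(\cdot,p_i)\}_{i=1}^n$ define the map $T:k(\cdot,p_i)\mapsto k(\cdot,p_i)\bar s_i$; the positivity of $P$ is exactly the statement that $\mathbf I-TT^*\succeq 0$ on this module, i.e. $T$ extends to a contraction intertwining the (backward-shift-type) compressed multiplication operators, and then a lifting theorem yields a contractive multiplier $M_S$ on all of $\rH(\mathbb B)$ with $M_S^*k(\cdot,p_i)=k(\cdot,p_i)\bar s_i$, forcing $S(p_i)=s_i$. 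Since the multiplier norm of $M_S$ is at most one, Theorem~\ref{T:1.1} (via the Toeplitz matrices $\mathbf S_n$) identifies $S$ as an element of $\mathcal R(\mathbb B,\overline{\mathbb B})$. The delicate parts here are that the requisite commutant-lifting machinery must be available over $\mathbb H$ (one may instead argue more concretely by a Schur-type recursive construction, or by a direct colligation/realization as in \cite{acs1}, building a coisometric or contractive realization whose transfer function solves the problem); and that all the reproducing-kernel manipulations respect the side on which quaternionic scalars act. I would therefore structure the proof so that the quaternionic reproducing-kernel identities for $\rH(\mathbb B)$ and its multipliers are isolated as preliminary lemmas, after which both directions follow the classical template almost verbatim.
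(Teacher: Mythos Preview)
Your necessity argument is essentially the paper's (Corollary~\ref{C:3.6}), but your key formula $M_S^*k(\cdot,q)=k(\cdot,q)\overline{S(q)}$ is \emph{false} in the quaternionic setting and this error propagates into your sufficiency plan. The correct computation (see \eqref{3.10}) gives
\[
\big(M_S^*k_{\rH}(\cdot,q)\big)(p)=\sum_{k\ge 0}p^k\,\overline{S(q)}\,\bar q^{\,k},
\]
which is $k_{\rH}(\cdot,\,S(q)^{-1}qS(q))\,\overline{S(q)}$ when $S(q)\neq 0$, \emph{not} $k_{\rH}(\cdot,q)\overline{S(q)}$; the kernel functions are not ``eigenvectors'' of $M_S^*$ because $\overline{S(q)}$ does not commute with $\bar q$. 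Consequently your proposed operator $T:k(\cdot,p_i)\mapsto k(\cdot,p_i)\bar s_i$ is not the restriction of any $M_S^*$, and the condition ``$I-TT^*\succeq 0$ on the span of $k(\cdot,p_i)$'' is not the positivity of the Pick matrix \eqref{1.6}. The paper handles this by working with the operators $T_1,T_2$ of \eqref{4.6}, where the second kernel sits at the conjugated point $s_i^{-1}p_is_i$.

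For sufficiency the paper takes a genuinely different route from your commutant-lifting sketch: it follows Potapov's fundamental-matrix-inequality method. Theorem~\ref{T:2.3} characterizes solutions by the positivity of the augmented kernel \eqref{4.1}; when $P>0$ one builds the explicit $J$-inner matrix $\Theta$ in \eqref{5.1}, proves the identity \eqref{5.4} for $K_{\Theta,J}$, and shows (Theorem~\ref{T:2.6}) that every $\cE\in\mathcal R(\mathbb B,\overline{\mathbb B})$ plugged into the linear-fractional map \eqref{5.6} produces a solution. The singular case is then obtained in Section~6 by a reduction argument. This is constructive and avoids any appeal to a quaternionic commutant-lifting theorem, which is not available in the paper and whose formulation would itself have to confront the conjugated-point phenomenon above. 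Your outline could perhaps be salvaged by redefining $T$ correctly and developing a quaternionic lifting theorem, but as written the sufficiency argument has a real gap at both the algebraic level (wrong action of $M_S^*$) and the structural level (the lifting step is asserted, not proved).
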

\begin{remark}
{\rm It is known that the restriction of any slice regular function $S$ to any $2$-sphere is
completely determined by the
values of $S$ at any two points of this sphere. Thus, if three interpolation nodes (say,
$p_1,p_2,p_3$) are equivalent,
then the value of $s_3$ must be uniquely specified by $s_1$ and $s_2$  in order the problem to have
a solution.
Theorem \ref{T:1.2} asserts that condition $P\ge 0$ specifies $s_3$ in this unique way. }
\label{R:1.3}
\end{remark}
Therefore, once we know that the problem $\PR$ is solvable, there is no need to keep more than two
interpolation conditions on the
same $2$-sphere. For each set of more than two conditions on the same $2$-sphere, we keep any two of
them and remove the others.
In this way we reduce the original problem to the one for which
$$
{\bf (A)} :\quad \mbox {{\em None three of the interpolation nodes belong to the same $2$-sphere}}.
$$
By Remark \ref{R:1.3} the reduced problem will have the same solution set as the original one. Thus
it is sufficient to get the
uniqueness criterion and the description of the solution set for the reduced problem which is
characterized by the property ${\bf (A)}$.
\begin{theorem}
Under assumption  ${\bf (A)}$, the problem $\PR$ is
determinate if and only if the Pick matrix $P$ of the problem
is positive semidefinite and singular.
\label{T:1.4}
\end{theorem}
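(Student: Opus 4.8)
The plan is to prove Theorem~\ref{T:1.4} by establishing both implications, using Theorem~\ref{T:1.2} as the solvability engine and exploiting the structure of the Pick matrix under assumption~${\bf (A)}$.

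\textbf{The ``if'' direction (singular $P\ge0$ $\Rightarrow$ determinate).} Assume $P\ge 0$ is singular, so there is a nonzero vector $u=(u_1,\dots,u_n)^T\in\mathbb H^n$ (a right null vector) with $Pu=0$, hence $u^*Pu=0$. First I would record the reproducing-kernel interpretation: writing $k_w(p)=\sum_k p^k\bar w^k$ for the Hardy kernel of ${\rm H}^2(\mathbb B)$, each entry $P_{ij}=\langle (1-M_S^*M_S)^{\sim}k_{p_j}, k_{p_i}\rangle$-type expression, where for any solution $S$ with multiplier $M_S$ one has the standard factorization $P=C^*C-D^*D$ for appropriate block operators built from $M_S$ and the point evaluations; more concretely, if $S,\widetilde S$ are two solutions, the difference of the corresponding Pick-type data must still agree since the interpolation conditions fix $S(p_i)=s_i$. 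The key point is that any solution $S$ satisfies $\|M_S\|\le 1$, so the associated operator identity forces $u^*Pu=0$ to propagate: the null vector $u$ of $P$ produces a relation $\sum_i k_{p_i}(p)u_i$ lying in a space on which $M_S^*$ acts isometrically, which pins down the values of $S$ not just at the nodes but forces $S$ to be the unique function solving a homogeneous constraint. I would make this precise by the contractive-multiplier functional model from the identification of $\mathcal R(\mathbb B,\overline{\mathbb B})$ with the unit ball of the multiplier algebra: the null space of $P$ encodes a subspace invariant under the model operators, on which the solution is rigidly determined, and assumption~${\bf(A)}$ guarantees the nodes $p_i$ lie on pairwise distinct $2$-spheres so that the kernels $k_{p_i}$ are ``independent enough'' that $u\ne0$ genuinely constrains $S$ rather than being an artifact of a sphere degeneracy.

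\textbf{The ``only if'' direction (determinate $\Rightarrow$ $P$ singular).} I would argue the contrapositive: if $P>0$ (strictly positive definite), then the problem is indeterminate. The idea is a perturbation argument. Given one solution $S_0$ (which exists by Theorem~\ref{T:1.2}), I want to produce a one-parameter family of distinct solutions. The natural device is to add a term of the form $B(p)\cdot c$ where $B$ is a slice-regular ``Blaschke-type'' product vanishing at $p_1,\dots,p_n$ (so the interpolation conditions are preserved) and $c\in\mathbb H$ is small; since $P>0$ there is room — the Schwarz--Pick/Pick inequality is strict — to absorb a small perturbation while keeping $\|S_0+Bc\|_\infty\le1$. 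Quantitatively, strict positivity of $P$ means $P-\varepsilon P'\ge0$ for the Pick matrix $P'$ of a perturbed target $(s_1,\dots,s_n)$ that has been nudged, or more cleanly: the set of solutions is a (nontrivial) ball in an operator sense, and $P>0$ forces its ``radius'' to be positive. Alternatively, using the description of all solutions in the indeterminate case (the linear-fractional parametrization, presumably established elsewhere in the paper), $P>0$ means the Redheffer/chain-scattering coefficient matrix is invertible, so the free Schur parameter ranges over the entire ball $\mathcal R(\mathbb B,\overline{\mathbb B})$, which is infinite. I would cite whichever parametrization the paper develops and note that its coefficient matrix is $J$-unitary with the relevant block invertible precisely when $P>0$.

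\textbf{Main obstacle.} The hard part will be the ``if'' direction: showing that the null vector of $P$ truly forces uniqueness in the noncommutative quaternionic setting. Over $\mathbb C$ one uses that a solution $S$ with $\langle(1-M_S^*M_S)f,f\rangle=0$ for $f=\sum u_i k_{p_i}$ makes $f$ a common eigenvector forcing $S$ to be a Blaschke product of degree $<n$, hence unique. Over $\mathbb H$ one must be careful that the inner product is $\mathbb H$-valued, the multiplier algebra is noncommutative, and the null vector is only a \emph{right} null vector; one has to verify that the relevant operator factorization $P=\Gamma^*\Gamma$ still holds with the $\mathbb H$-linearity on the correct side, and that ${\bf(A)}$ is exactly what prevents the degenerate situation flagged in Remark~\ref{R:1.3} (three nodes on a sphere) from masquerading as a singularity of $P$ unrelated to genuine rigidity of $S$. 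I would handle this by reducing, via the Splitting Lemma, to a complex-linear statement on each slice $\mathbb C_I$ and then tracking how the values on the $2$-spheres $[p_i]$ are recovered, using that each sphere carries at most two nodes so the classical argument applies slice-by-slice and glues consistently.
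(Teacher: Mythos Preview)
Your ``only if'' direction is essentially the paper's approach: when $P>0$ the paper constructs the matrix $\Theta$ in \eqref{5.1} and proves (Theorem~\ref{T:2.6}) that the linear-fractional map $\cE\mapsto S$ parametrizes all solutions by the full class $\mathcal R(\mathbb B,\overline{\mathbb B})$, so there are infinitely many. Your option~(b) is exactly this; the perturbation idea in option~(a) would require a quaternionic Blaschke product with prescribed zeros and a nontrivial sup-norm estimate, which is more work than simply invoking the parametrization.

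The ``if'' direction has a genuine gap. ``Determinate'' means \emph{exactly one} solution, so you must prove both uniqueness and existence, and you address only uniqueness---and that only vaguely. The paper handles the two parts by completely different mechanisms. For uniqueness (Lemma~\ref{L:4.7}) it does not use any slice-by-slice reduction or Splitting Lemma argument; instead it invokes the positive-kernel characterization of solutions (Theorem~\ref{T:2.3}): for any solution $S$ the block kernel $\widehat K_S(p,q)$ with $P$ in the corner is positive, so $P{\bf y}=0$ forces $B^S(p){\bf y}\equiv 0$, which unwinds to a $\star$-equation $S\star Q=R$ with $Q,R$ built explicitly from ${\bf y}$ and the kernels $k_{\rH}(\cdot,p_i)$. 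Assumption~${\bf (A)}$ enters only to guarantee (via Proposition~\ref{R:3.0}) that $R\not\equiv 0$, hence $Q\not\equiv 0$, hence $S=R\star Q^{-\star}$ is forced. Your proposed reduction to complex slices would run into the difficulty that distinct nodes $p_i$ need not lie on a common $\mathbb C_I$, so there is no single slice on which the classical argument applies.

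Existence in the singular case is the substantial part you have not touched. The paper (Theorem~\ref{T:6.1}) proves it by a reduction: pick a maximal invertible principal submatrix of $P$ (say the leading $d\times d$ block), apply the indeterminate-case parametrization \eqref{5.6} to the corresponding subproblem, and then show by a lengthy Stein-identity computation that the remaining $n-d$ interpolation conditions force the free parameter $\cE$ to be a single unimodular constant. The crux is proving that the induced conditions on $\cE$ are of the form $\cE(\cdot)=\gamma$ with $|\gamma|=1$, which requires verifying $|u_{n+i}|=|v_{n+i}|\neq 0$ in \eqref{6.7}; the nonvanishing uses assumption~${\bf (A)}$ in an essential way (Cases~1 and~2 at the end of Section~6). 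None of this is suggested by your outline, and without it the ``if'' direction is incomplete.
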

A fairly explicit formula for this unique solution will be given in Lemma \ref{L:4.7} below.

\smallskip

The outline of the paper is as follows. In Section 2 we recall some needed background on
slice regular functions and positive kernels. In Section 3 we characterize functions $f\in\mathcal
R(\mathbb B, \overline{\mathbb B})$
as contractive multipliers of the Hardy space $\rH(\mathbb B)$ of the unit ball and prove Theorem
\ref{T:1.1}. In Section 4 we characterize solutions to the problem $\PR$ in terms of positive kernels of
certain structure. Using this characterization,
in Section 5 we give a linear fractional parametrization of all solutions to the problem $\PR$ in
the indeterminate case and recover the Schwarz-Pick lemma as a consequence of this
description. Finally the determinate case of the problem $\PR$ is handled in  Section 6.

\section{Slice hyperholomorphic functions and kernels}
\setcounter{equation}{0}

In this section we collect  a number of basic facts needed in the sequel.
Interpreting the set  $\mathcal R(\mathbb B)$ as the right quaternionic vector space of power series
\eqref{3.4}  converging in $\mathbb B$, one can introduce the ring structure on $\mathcal R(\mathbb
B)$
using the convolution multiplication
\begin{equation}
g\star f (p)=\sum_{k=0}^\infty p^k\cdot \left(\sum_{r=0}^k g_r f_{k-r}\right)\quad\mbox{if}\quad
f(p)=\sum_{k=0}^\infty p^kf_k, \; \;  g(p)=\sum_{k=0}^\infty p^kg_k
\label{2.1}
\end{equation}
which is called  (left) {\em slice regular multiplication} in the present context. As a convolution
multiplication of the power
series over the noncommutative ring, the $\star$-multiplication  is associative and noncommutative.
Since the series in \eqref{2.1} converge absolutely in
$\mathbb B$, we can rearrange the terms getting
\begin{equation}
g\star f (p)=\sum_{k=0}^\infty p^k\left(\sum_{r=0}^\infty p^rg_r\right)f_k=\sum_{k=0}^\infty p^k
g(p)f_k
\label{2.2}
\end{equation}
which can also be written as
\begin{equation}
g\star f (p)=g(p)\sum_{k=0}^\infty (g(p)^{-1}pg(p))^kf_k=g(p)f(g(p)^{-1}pg(p)) \qquad (g(p)\neq 0),
\label{2.3}
\end{equation}
with the understanding that $g\star f (p)=0$ whenever $g(p)=0$. Observe that the point-wise formula
\eqref{2.3} makes sense even if
the functions $f$ and $g$ are not in $\mathcal R(\mathbb B)$
whereas formula \eqref{2.2} makes sense only for $f\in\mathcal R(\mathbb B)$. We also observe that
$g\star f(x)=g(x)f(x)$ for every $x\in\mathbb R$.

\smallskip

If the function $f\in\mathcal R(\mathbb B)$ is as in \eqref{2.1}, then we can construct its slice
regular inverse $f^{-\star}$ as
$f^{-\star}(p)=(f^c\star f)^{-1}f^c(p)$ where the {\em slice regular conjugate} $f^c$ of $f$ is defined by
\begin{equation}
f^c(p)=\sum_{k=0}^\infty p^k\overline{f}_k  \quad\mbox{if}\quad
 f(p)=\sum_{k=0}^\infty p^kf_k
\label{2.5}
\end{equation}
and $f^{-\star}$ is defined in $\mathbb B$ outside the zeros of $f^c\star f$.
If $f$
satisfies $f(0)=f_0\neq 0$,
one can define its $\star$-inverse $f^{-\star}$ using  the power series
$$
f^{-\star}(p)=\sum_{k=0}p^k a_k,\quad\mbox{where}\quad a_0=f_0^{-1}\quad\mbox{and}\quad
a_k=-f_0^{-1}\sum_{j=1}^kf_ja_{k-j}\; \;
(k\ge 1)
$$
with the coefficients $a_k$ defined recursively. If $f(p)\neq 0$ for all $p\in\mathbb B$, the
latter power series converges on $\mathbb B$. Equalities $f^{-\star}\star f=f\star f^{-\star}\equiv
1$ and
$(g\star f)^{-\star}=f^{-\star}\star g^{-\star}$ are immediate.  An application of \eqref{2.3} shows
that
\begin{equation}
f^{-\star}(p)=f(\widetilde{p})^{-1},\quad\mbox{where}\quad \widetilde
p=f^c(p)^{-1}pf^c(p),\quad
 f(p)=\sum_{k=0}^\infty p^kf_k.
\label{2.4}
\end{equation}
\subsection{Right slice regular functions} A real differentiable function $f: \, \Omega\to \mathbb
H$ is called
{\em right slice regular} on $\Omega$ (in notation, $f\in\mathcal R^r(\Omega)$) if for every
$I\in\mathbb S$ its restriction $f_I$ to $\Omega\cap \mathbb C_I$ is subject to
$$
\frac{\partial}{\partial x}f_I(x+Iy)
+\frac{\partial}{\partial y}f_I(x+Iy)I\equiv 0.
$$
The results for right slice regular functions are completely parallel to those for (left) regular
ones.
The functions in $f\in\mathcal R^r(\mathbb B)$ can be identified with power series
$\; f(p)=\sum_{k=0}^\infty f_k p^k\; $
converging on $\mathbb B$. The set $\mathcal{R}^r(\mathbb B)$ itself is a left quaternionic vector
space and it becomes a ring
once we introduce the {\em right slice multiplication}
$$
g\star_r f (p)=\sum_{k=0}^\infty \left(\sum_{r=0}^k g_r f_{k-r}\right)p^k\quad\mbox{if}\quad
f(p)=\sum_{k=0}^\infty f_kp^k, \; \;  g(p)=\sum_{k=0}^\infty g_k p^k
$$
which can be written alternatively (analogously to formulas \eqref{2.2} and \eqref{2.3}) as
\begin{equation}
g\star_r f (p)=\sum_{k=0}^\infty g_kf(p)p^k=\left\{\begin{array}{cll}  g(f(p)pg(p)^{-1}) f(p)
& {\rm if} & f(p)=0, \\ 0 & {\rm if} & f(p)\neq 0.\end{array}\right.
\label{2.6}
\end{equation}

\subsection{Positive kernels} A matrix-valued function $K(p,q): \, \Omega\times\Omega\to \mathbb
H^{m\times m}$ is called a
positive kernel (in notation, $K\succeq 0$) if the block matrix $\left[K(q_i,q_j)\right]_{i,j=1}^r$
is positive semidefinite for any
choice of finitely many points $q_1,\ldots,q_r$. Equivalently,
$$
\sum_{i,j=1}^rc_i^* K (q_i,q_j)c_j\ge 0\quad\mbox{for all}\quad r\in\mathbb N, \;
c_1,\ldots,c_r\in\mathbb H^m, \;
q_1,\ldots,q_r\in\Omega.
$$
\begin{definition}
We say that the kernel $K(p,q): \, \Omega\times\Omega\to \mathbb H^{m\times m}$ is {\em slice
sesquiregular}
on an open set $\Omega\subset\mathbb H$ if it is (left) slice regular in $p$ and right slice regular
in $\bar q$.
\end{definition}

\smallskip

Several simple statements on positive kernels are collected in the next proposition.
\begin{proposition}
Let $\Omega\subset\mathbb H$ and let $K: \, \Omega\times \Omega\to \mathbb H^{m\times m}$ be a
positive kernel.
Then
\begin{enumerate}
\item For every $A: \, \Omega\to {\mathbb H}^{k\times m}$,
 the kernel $A(p)K(p,q)A(q)^*$ is positive on $\Omega\times\Omega$.
\item For every positive definite matrix $P\in {\mathbb H}^{k\times k}$ and any function $B: \,
\Omega\to {\mathbb H}^{m\times k}$, the kernel $\left[\begin{smallmatrix}P & B(q)^*\\
B(p) & K(p,q)\end{smallmatrix}\right]$ is positive if and only if the Schur complement of $P$
defined below is positive semidefinite:
$$
K(p,q)-B(p)P^{-1}B(q)^*\succeq 0.
$$
\item If in addition, $m=1$, $\Omega$ is open and contains the origin,
and $K: \, \Omega\times\Omega\to \mathbb H$ is  slice sesquiregular, then for every
(left) slice regular function $A: \, \Omega\to {\mathbb H}$, the kernel
$(A\star K\star_r \overline{A})(p,q)$ is positive and  slice sesquiregular.
\end{enumerate}
\label{P:2.1}
\end{proposition}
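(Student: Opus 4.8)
The plan is to treat (1) and (2) by the noncommutative analogues of the usual substitution and Schur‑complement arguments, and to prove (3) by combining the power‑series definitions of the $\star$‑ and $\star_r$‑products (to obtain slice sesquiregularity) with the pointwise product formulas \eqref{2.3} and \eqref{2.6} (to obtain positivity). For (1): given $q_1,\dots,q_r\in\Omega$ and $c_1,\dots,c_r\in\mathbb H^k$, put $d_i:=A(q_i)^*c_i\in\mathbb H^m$; then $\sum_{i,j}c_i^*A(q_i)K(q_i,q_j)A(q_j)^*c_j=\sum_{i,j}d_i^*K(q_i,q_j)d_j\ge0$ because $K\succeq0$. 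For (2): representing the test vector at $q_i$ by the pair $(a_i,c_i)$ with $a_i\in\mathbb H^k$ and $c_i\in\mathbb H^m$, and setting $a:=\sum_i a_i$, $v:=\sum_i B(q_i)^*c_i$ and $Q:=\sum_{i,j}c_i^*K(q_i,q_j)c_j$, a direct expansion shows that the quadratic form of the block kernel at these data equals $a^*Pa+a^*v+v^*a+Q$. Since $P$ is Hermitian and positive definite, it is invertible with $P^{-1}$ again Hermitian positive definite, and completing the square gives
\[
a^*Pa+a^*v+v^*a+Q=(a+P^{-1}v)^*P(a+P^{-1}v)+\bigl(Q-v^*P^{-1}v\bigr).
\]
For each fixed choice of nodes $q_i$ and vectors $c_i$ the element $a$ ranges over all of $\mathbb H^k$ (take $a_1=a$ and the other $a_i=0$) and attains its minimum $Q-v^*P^{-1}v$ at $a=-P^{-1}v$; hence the block kernel is positive if and only if $Q-v^*P^{-1}v\ge0$ for all admissible data, which is precisely $K-BP^{-1}B^*\succeq0$.

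For (3) I would first verify slice sesquiregularity. Expanding $K(p,q)=\sum_{m,n\ge0}p^mK_{mn}\bar q^n$ near the origin and $A(p)=\sum_k p^k a_k$, the $\star$‑product $A\star K$ in the variable $p$ is, by \eqref{2.1}, the power series $\sum_k p^k\bigl(\sum_{r=0}^k a_r\sum_n K_{k-r,n}\bar q^n\bigr)$, which is again left slice regular in $p$ and whose coefficients, being left $\mathbb H$‑linear combinations of the functions $q\mapsto\sum_n K_{mn}\bar q^n$, are right slice regular in $\bar q$. Since $\overline{A(q)}=\sum_k\bar a_k\bar q^k$ is right slice regular in $\bar q$, forming $\star_r\overline A$ in the variable $\bar q$ preserves left slice regularity in $p$ of the coefficients and again returns a power series in $\bar q$; hence $A\star K\star_r\overline A$ is slice sesquiregular.

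For positivity I would pass to the pointwise formulas. Applying \eqref{2.3} in the variable $p$ with left factor $A$ and right factor $K(\cdot,q)$ gives $(A\star K)(p,q)=A(p)\,K\!\bigl(A(p)^{-1}pA(p),\,q\bigr)$ when $A(p)\ne0$; applying next the $\star_r$‑version \eqref{2.6} in the variable $\bar q$ with right factor $\overline A$, together with the identity $\overline{A(q)}\,\bar q\,\overline{A(q)}^{-1}=\overline{A(q)^{-1}qA(q)}$ (which uses $\overline{u^{-1}}=(\overline u)^{-1}$), yields
\[
(A\star K\star_r\overline A)(p,q)=A(p)\,K\!\bigl(A(p)^{-1}pA(p),\ A(q)^{-1}qA(q)\bigr)\,\overline{A(q)},
\]
the right‑hand side being $0$ whenever $A(p)=0$ or $A(q)=0$. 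Write $\Phi(q):=A(q)^{-1}qA(q)$; the point $\Phi(q)$ is equivalent to $q$, hence still lies in $\Omega$ (which in the applications, notably for $\Omega=\mathbb B$, is axially symmetric), so $K(\Phi(q_i),\Phi(q_j))$ is defined. Given $q_1,\dots,q_r\in\Omega$ and $c_1,\dots,c_r\in\mathbb H$, I would drop the indices with $A(q_i)=0$ (whose rows and columns in the kernel vanish), set $d_i:=\overline{A(q_i)}\,c_i$ so that $\bar c_i A(q_i)=\overline{d_i}$, and conclude
\[
\sum_{i,j}\bar c_i\,(A\star K\star_r\overline A)(q_i,q_j)\,c_j=\sum_{i,j}\overline{d_i}\,K(\Phi(q_i),\Phi(q_j))\,d_j\ge0
\]
from $K\succeq0$. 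The step I expect to be the main obstacle is the bookkeeping here: confirming that $\star_r$ in the variable $\bar q$ conjugates the second entry of $K$ by exactly the same map $q\mapsto A(q)^{-1}qA(q)$ that $\star$ produces on the left, and tracking the degenerate points where $A(p)=0$ or $A(q)=0$; everything else is rearrangement of absolutely convergent series.
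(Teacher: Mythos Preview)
Your arguments for (1) and (2) are correct and amount to the same Schur--complement reasoning the paper uses. The only cosmetic difference is in (2): the paper writes the block kernel as
\[
\begin{bmatrix}P & B(q)^*\\ B(p) & K(p,q)\end{bmatrix}
=A(p)\begin{bmatrix}P & 0\\ 0 & K(p,q)-B(p)P^{-1}B(q)^*\end{bmatrix}A(q)^*,
\qquad A(p)=\begin{bmatrix}\mathbf I_k & 0\\ B(p)P^{-1} & \mathbf I_m\end{bmatrix},
\]
and then appeals to (1) together with the invertibility of $A(p)$, whereas you expand the quadratic form and complete the square. These are two presentations of the same identity.

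For (3) the paper gives no argument at all; it simply cites \cite[Proposition~5.3]{acs3}. Your direct proof is a genuine addition. The power--series computation establishing slice sesquiregularity is fine, and your pointwise identity
\[
(A\star K\star_r\overline A)(p,q)=A(p)\,K\!\bigl(A(p)^{-1}pA(p),\ A(q)^{-1}qA(q)\bigr)\,\overline{A(q)}
\]
is correct: the conjugation on the right comes out exactly as you say because $\overline{A(q)}\,\bar q\,\overline{A(q)}^{-1}=\overline{A(q)^{-1}qA(q)}$, and the degenerate rows/columns with $A(q_i)=0$ do vanish since $g\star f(p)=0$ when $g(p)=0$ and $g\star_r f(p)=0$ when $f(p)=0$. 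The one point that your argument needs but the proposition does not literally supply is that $A(q)^{-1}qA(q)\in\Omega$; this requires $\Omega$ to be axially symmetric (equivalently, a union of $2$--spheres). You note this yourself, and it holds in every use of part (3) in the paper, where $\Omega=\mathbb B$. So your approach trades a citation for an explicit computation, at the cost of making visible a mild domain hypothesis that the stated proposition leaves implicit.
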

\begin{proof}
Statement (1) follows by the definition of the
positive kernel and the corresponding property of positive
semidefinite matrices. Due to factorization
$$
\begin{bmatrix}P & B(q)^*\\ B(p) & K(p,q)\end{bmatrix}=A(p)
\begin{bmatrix}P & 0\\ 0 & K(p,q)-B(p)P^{-1}B(q)^*\end{bmatrix}
A(q)^*,
$$
where $A(p)=\left[\begin{smallmatrix} \mathbf{I}_k & 0 \\ B(p)P^{-1} &
\mathbf{I}_m\end{smallmatrix}\right]$, part (2) follows from part (1) and the fact that the matrix $A(p)$
is invertible for every $p\in\Omega$. For part (3), see \cite[Proposition 5.3]{acs3}.
\end{proof}


\section{The space $\rH(\mathbb B)$ and its contractive multipliers}
\setcounter{equation}{0}

In this section we show that the class $\mathcal R(\mathbb B, \overline{\mathbb B})$ can be identified with
the class of contractive
multipliers of the
quaternionic Hardy space  $\rH$ of the unit ball $\mathbb B$. This space is defined as the space of
square summable  (left) slice regular power series:
\begin{equation}
\rH=\left\{f(p)=\sum_{k=0}^\infty p^kf_k: \, \|f\|^2_{\rH}:=\sum_{k=0}^\infty|f_k|^2<\infty\right\}.
\label{3.1}
\end{equation}
The space $\rH$ is a right quaternionic Hilbert space with inner product
\begin{equation}
\label{3.2}
\langle f, \, g\rangle=\sum_{k=0}^\infty \bar g_k f_k\quad\mbox{if}\quad
f(p)=\displaystyle\sum_{k=0}^\infty p^kf_k, \; \, g(p)=\sum_{k=0}^\infty p^kg_k.
\end{equation}
A power-series computation followed by integration of (uniformly converging on compact sets) power
series  shows that for $f$ as in \eqref{3.2} and for a fixed $I\in\mathbb S$,
\begin{align*}
\int_{0}^{2\pi}|f(re^{I\theta})|^2d\theta&=
\int_{0}^{2\pi}\left(\sum_{j,k=0}^\infty r^{k+j}\overline{f}_ke^{I(j-k)\theta}f_j\right)d\theta\\
&=\sum_{j,k=0}^\infty r^{k+j}\overline{f}_k
\left(\int_{0}^{2\pi}e^{I(j-k)\theta}\theta \right)f_j=2\pi \cdot\sum_{n=0}^\infty r^{2n}|f_n|^2.
\end{align*}
The latter formula implies that the norm in $\rH$ can be equivalently defined as
\begin{equation}
\| f\|^2_{\rH}= \sup_{0\le r< 1} \frac{1}{2\pi}\int_0^{2\pi} | f(re^{I\theta})|^2 d\theta
\label{3.3}
\end{equation}
where the value of the integral on the right is the same for each $I\in\mathbb S$.
Observe that the supremum in the last formula can be replaced by the limit as $r$ tends to one.

\smallskip

The space $\rH$ can be alternatively characterized as the reproducing
kernel Hilbert space with reproducing kernel
\begin{equation}
\label{3.4}
k_{\rH}(p,q)=\sum_{n=0}^\infty p^n \overline{q}^n.
\end{equation}
The latter means that the function $k_{\rH}(\cdot,q)$ belongs to $\rH$ for every
$q\in\B$ and for any function $f\in\rH$ as in \eqref{3.2},
\begin{equation}
\left\langle f, \, k_{\rH}(\cdot,q) \right\rangle_{\rH}=\sum_{k=0}^\infty q^k f_k=f(q).
\label{3.5}
\end{equation}
\begin{proposition}
The finite collection of functions $\{k_{\rH}(\cdot,q_i)\}$ based on distinct points
$q_1,\ldots,q_k\in\mathbb B$
is (right) linearly independent in $\rH$ if and only if none three of these points belong to the
same $2$-sphere.
\label{R:3.0}
\end{proposition}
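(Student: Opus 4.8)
The plan is to translate linear (in)dependence of the kernels into a condition on the points $q_i$ via the evaluation functionals, and then into a concrete condition on the quaternions $q_i^n$. By the reproducing property \eqref{3.5} and conjugate‑linearity of the inner product \eqref{3.2} in its second argument, a relation $\sum_{i=1}^{k}k_{\rH}(\cdot,q_i)c_i=0$ holds in $\rH$ if and only if $\sum_{i=1}^{k}\overline{c_i}\,f(q_i)=0$ for every $f\in\rH$; testing against the monomials $f(p)=p^{n}\in\rH$, and summing these identities back (the double series converges absolutely since $|q_i|<1$), shows this is equivalent to the existence of quaternions $d_i=\overline{c_i}$, not all zero, with $\sum_{i=1}^{k}d_i\,q_i^{n}=0$ for every $n\ge0$. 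So the proposition reduces to: such a nonzero $(d_i)$ exists if and only if three of the $q_i$ are equivalent. I will repeatedly use that for $q=x+v$ with $v=\mathrm{Im}\,q$ one has $v^{2}=-|v|^{2}\in\mathbb R$, hence $q^{n}=a_n+v\,b_n$ with real scalars $a_n,b_n$ depending only on $x$ and $|v|$ — that is, only on $[q]$ — namely $a_n=\mathrm{Re}(z^{n})$ and, when $v\neq0$, $b_n=\mathrm{Im}(z^{n})/|v|$, for the complex number $z$ with $\mathrm{Re}\,z=x$ and $\mathrm{Im}\,z=|v|\ge0$.

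For the ``if'' direction, let $q_1,q_2,q_3$ be equivalent. Since they are distinct their common imaginary modulus is positive, so the imaginary parts $v_\ell=\mathrm{Im}\,q_\ell$ are distinct and $v_\ell^{2}$ is a common negative real. Then $\sum_{\ell=1}^{3}d_\ell q_\ell^{n}=a_n\sum_\ell d_\ell+b_n\sum_\ell d_\ell v_\ell$, which vanishes for all $n$ as soon as $\sum_\ell d_\ell=0$ and $\sum_\ell d_\ell v_\ell=0$; eliminating $d_3$ this becomes the single quaternionic equation $d_1(v_1-v_3)+d_2(v_2-v_3)=0$, solvable with $d_2=1$ and $d_1=-(v_2-v_3)(v_1-v_3)^{-1}$ since $v_1-v_3$ is a nonzero quaternion, hence invertible. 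Padding with $d_i=0$ for $i\ge4$ produces a nonzero relation $\sum_i d_i q_i^{n}=0$, so the kernels are right linearly dependent.

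For the ``only if'' direction I argue the contrapositive: assume no three $q_i$ are equivalent and $\sum_i d_i q_i^{n}=0$ for all $n$, and deduce $d_i=0$ for all $i$. Partition the indices by equivalence class, $\{1,\dots,k\}=\bigsqcup_{t=1}^{m}J_t$ with $|J_t|\in\{1,2\}$, let $x_t$ and $y_t\ge0$ be the common real part and imaginary modulus on $J_t$, and let $z_t\in\mathbb C$ be the associated number; distinct classes have distinct $(x_t,y_t)$, so the $z_t$ are distinct. With $D_t=\sum_{i\in J_t}d_i$ and $E_t=\sum_{i\in J_t}d_i v_i$, the hypothesis reads $\sum_{t}\bigl(a_n^{(t)}D_t+b_n^{(t)}E_t\bigr)=0$ for all $n$, where $a_n^{(t)}=\mathrm{Re}(z_t^{n})$ and $b_n^{(t)}=\mathrm{Im}(z_t^{n})/y_t$ (read as $0$ when $y_t=0$, which is consistent since then $v_i=0$ and $E_t=0$). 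Reading this identity in each real coordinate of $\mathbb H$, the crux is the $\mathbb R$-linear independence of the scalar sequences $\{(a_n^{(t)})_n:1\le t\le m\}\cup\{(b_n^{(t)})_n:y_t>0\}$. I obtain it by passing to $z_t^{n}$ and $\bar z_t^{n}$: the $z_t$ are distinct, and the extra conjugates $\bar z_t$ with $y_t>0$ are distinct from one another and from every $z_s$ (they have negative imaginary part), so the classical independence of geometric sequences $(w^{n})_{n\ge0}$ for distinct $w$ (a Vandermonde argument) forces all coefficients to vanish. Hence $D_t=0$ for all $t$ and $E_t=0$ whenever $y_t>0$, so $D_t=E_t=0$ for every $t$. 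Finally this forces $d_i=0$ on each class: immediate if $|J_t|=1$, and if $J_t=\{i,i'\}$ then $d_i+d_{i'}=0$ together with $d_iv_i+d_{i'}v_{i'}=0$ gives $d_i(v_i-v_{i'})=0$, whence $d_i=d_{i'}=0$ since $v_i\neq v_{i'}$.

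The routine ingredients are the reproducing‑kernel bookkeeping of the first paragraph and the elementary identity $q^{n}=a_n+v\,b_n$; the step I expect to demand genuine care is the $\mathbb R$-linear independence of the sequences $(a_n^{(t)})_n,(b_n^{(t)})_n$ — in particular the correct handling of the degenerate real classes, where $(b_n^{(t)})_n\equiv0$ and $E_t$ is not an independent parameter — which is exactly what the passage to $\{z_t,\bar z_t\}$ and the Vandermonde argument settle.
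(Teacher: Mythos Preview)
Your proof is correct. The initial reduction to the condition $\sum_i d_i\,q_i^{\,n}=0$ for all $n\ge 0$ is the same as the paper's; the paper reaches it by expanding $k_{\rH}(\cdot,q_i)=\sum_n p^n\bar q_i^{\,n}$ and equating coefficients, while you go through the reproducing property and test against monomials, but these are equivalent one-line translations (related by a single conjugation). The substantive difference is that the paper then \emph{cites} Lam's work on quaternionic Vandermonde matrices for the equivalence ``$\exists$ nontrivial $(d_i)$ $\Leftrightarrow$ three of the $q_i$ are equivalent'', whereas you supply a complete self-contained argument: the elementary identity $q^{n}=a_n+v\,b_n$ (with $a_n,b_n$ real and depending only on $[q]$) lets you group by equivalence class, and the passage to the distinct complex points $\{z_t\}\cup\{\bar z_t:y_t>0\}$ together with the classical complex Vandermonde independence cleanly handles both the generic and the degenerate real-class cases. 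What you gain is a proof independent of the external reference, which also makes transparent why the critical number is exactly three points per sphere (two quaternionic constraints $D_t=E_t=0$ against at most two unknowns $d_i$ per class); what the paper's approach gains is brevity.
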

\begin{proof}
Let us assume that
$\sum_{i=1}^kk_{\rH}(p,q_i)\alpha_i\equiv 0$. Substituting the
power series expansions \eqref{3.4} for $k_\rH(\cdot,q_i)$ into
this identity and equating the corresponding coefficients we
conclude that the columns of the Vandermonde matrix
$V=\left[q_i^{j-1}\right]_{i,j=1}^k$ are linearly dependent which
is the case  if and only if there are three equivalent points in
$\{q_1,\ldots,q_k\}$; we refer to \cite{lam_paper} for more details.
\end{proof}
\begin{remark}
{\rm The linear dependence of three functions $k_{\rH}(\cdot,p_i)$ based on equivalent points
implies that
the restriction of any function $f\in\rH$ to any $2$-sphere is completely determined by the values
of $f$ at any two
points of this sphere. Indeed, if
$$
p_i=x+yI_i,\quad (x,y\in{\mathbb R}, \; I_i\in{\mathbb S}, \; i=1,2,3),
$$
then it is readily checked that
$$
\overline{p}_3^n=\overline{p}_1^n(I_1-I_2)^{-1}(I_3-I_2)+\overline{p}_2^n(I_1-I_2)^{-1}(I_1-I_3)
\quad\mbox{for all}\quad n\ge 0
$$
which implies the identity
\begin{equation}\label{3.6}
k_{\rH}(p,p_3)\equiv k_{\rH}(p,p_1)(I_1-I_2)^{-1}(I_3-I_2)+k_{\rH}(p,p_2)(I_1-I_2)^{-1}(I_1-I_3).
\end{equation}
Combining the latter identity with \eqref{3.5} leads us to
\begin{align}
f(p_3)=\left\langle f, \, k_{\rH}(\cdot,p_3) \right\rangle_{\rH}=&
\left\langle f, \, k_{\rH}(\cdot,p_1)(I_1-I_2)^{-1}(I_3-I_2) \right\rangle_{\rH}\notag\\
&+\left\langle f, \, k_{\rH}(\cdot,p_2)(I_1-I_2)^{-1}(I_1-I_3) \right\rangle_{\rH}\notag\\
=&(\overline{I}_1-\overline{I}_2)^{-1}(\overline{I}_3-\overline{I}_2)f(p_1)+
(\overline{I}_1-\overline{I}_2)^{-1}(\overline{I}_1-\overline{I}_3)f(p_2)\notag\\
=&(I_2-I_1)^{-1}\left\{(I_2-I_3)f(p_1)+(I_3-I_1)f(p_2)\right\}.\label{3.7}
\end{align}}
\label{R:pr}
\end{remark}
The latter representation was established in  \cite{cgss} for
general slice regular functions on axially symmetric s-domains.  We now pass to the main result of
this section.
\begin{theorem}
Let $S: \, \mathbb B\to \mathbb H$. The following are equivalent:
\begin{enumerate}
\item $S$ is slice regular on $\mathbb B$ and $|S(p)|\le 1$ for
all $p\in\mathbb B$.
\item The operator $M_S$ of  left $\star$--multiplication by $S$
\begin{equation}\label{3.8}
M_S: \, f\mapsto S\star f
\end{equation}
is a contraction on $\rH$, that is, $\|S\star f\|_{\rH}\le \|f\|_{\rH}$ for all $f\in\rH$.
\item The kernel
\begin{equation}\label{3.9}
K_S(p,q)=\sum_{k=0}^\infty p^k(1-S(p)\overline{S(q)})\bar q^k
\end{equation}
is positive on $\mathbb B\times\mathbb B$.
\item $S\in\mathcal R(\mathbb B)$ and ${\bf I}_n-{\bf S}_n{\bf
S}_n^*\ge 0$
for all $n\ge 0$ where ${\bf S}_n$ is the matrix given in \eqref{1.4}.
\end{enumerate}
\label{T:3.1}
\end{theorem}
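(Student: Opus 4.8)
The plan is to establish the cycle of implications $(1)\Rightarrow(2)\Rightarrow(3)\Rightarrow(4)\Rightarrow(1)$, which then gives equivalence of all four statements. The key conceptual engine is the reproducing kernel identity \eqref{3.5} together with the pointwise multiplication formula \eqref{2.3}, which lets one compute how $M_S$ interacts with the kernel functions $k_{\rH}(\cdot,q)$.

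\medskip

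\emph{$(1)\Rightarrow(2)$.} First I would verify that for $S$ slice regular and bounded by $1$ on $\mathbb B$, the $\star$-multiplication operator $M_S$ is bounded on $\rH$. The clean way is to use the integral form of the norm \eqref{3.3}: for $f\in\rH$, formula \eqref{2.3} gives $(S\star f)(re^{I\theta}) = S(re^{I\theta})\, f\big(S(re^{I\theta})^{-1} re^{I\theta} S(re^{I\theta})\big)$ wherever $S(re^{I\theta})\neq 0$ (and the value is $0$ otherwise). Since conjugation by $S(re^{I\theta})$ is an isometry of $\mathbb H$ fixing the modulus, the point $\widetilde{p}=S^{-1}pS$ traces the same circle $|\,\cdot\,|=r$ in a possibly different slice; a change of variables in the integral \eqref{3.3}, combined with $|S(re^{I\theta})|\le 1$, yields $\|S\star f\|_{\rH}\le\|f\|_{\rH}$. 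The technical nuisance here is handling the zero set of $S$ on each circle (measure zero, so it does not affect the integral) and being careful that the reparametrization of the circle is measure-preserving — this is where the slice structure really gets used.

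\medskip

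\emph{$(2)\Rightarrow(3)$.} This is the standard reproducing-kernel argument. I would first show that $M_S^* k_{\rH}(\cdot,q) = \overline{S(q)}\, k_{\rH}(\cdot,q)$, i.e. the kernel functions are eigenvectors of the adjoint with eigenvalue $\overline{S(q)}$; this follows by testing against an arbitrary $f\in\rH$: $\langle f, M_S^* k_{\rH}(\cdot,q)\rangle = \langle S\star f, k_{\rH}(\cdot,q)\rangle = (S\star f)(q)$, and one checks $(S\star f)(q) = \overline{S(q)}^{\,*}\!\cdot$ (something) — more precisely one uses \eqref{3.5} and the power-series form of $\star$ to get $(S\star f)(q)=\sum_k q^k\sum_{r}S_r f_{k-r}$, which must be reconciled with $\langle f, \overline{S(q)} k_{\rH}(\cdot,q)\rangle$. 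A short computation confirms the eigenvalue relation (the noncommutativity means one must be attentive to the side on which $S(q)$ appears, but \eqref{3.2} is designed for exactly this). Then for points $q_1,\dots,q_r$ and quaternions $c_1,\dots,c_r$,
\[
\sum_{i,j} c_i^* K_S(q_i,q_j) c_j = \sum_{i,j} c_i^* \langle ({\bf I}-M_SM_S^*) k_{\rH}(\cdot,q_j), k_{\rH}(\cdot,q_i)\rangle c_j = \big\| ({\bf I}-M_SM_S^*)^{1/2} \textstyle\sum_j k_{\rH}(\cdot,q_j)c_j\big\|^2 \ge 0
\]
since $\|M_S\|\le 1$ forces ${\bf I}-M_SM_S^*\ge 0$. (Strictly, one writes it via $\langle ({\bf I}-M_SM_S^*)v,v\rangle$ with $v=\sum_j k_{\rH}(\cdot,q_j)c_j$, using the eigenvalue relation to produce the $(1-S(q_i)\overline{S(q_j)})$ factors.)

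\medskip

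\emph{$(3)\Rightarrow(4)$.} Positivity of $K_S$ on all finite subsets of $\mathbb B$ in particular applies to the points $q_i = r\zeta^{i-1}$ for a fixed real $r\in(0,1)$ and $\zeta$ a root of unity, or — cleaner — one extracts the coefficient information directly: expanding \eqref{3.9} and using that $S\in\mathcal R(\mathbb B)$ (which follows from $(3)$ once one knows a kernel-positive $S$ is regular, or is simply part of the hypothesis chain via $(1)$), one identifies the block Toeplitz structure. The concrete step is to test the positive kernel against the points $0$ with higher-order ``derivatives'' — equivalently, to use that positivity of $K_S(p,q)$ as a slice sesquiregular kernel on $\mathbb B\times\mathbb B$ is equivalent to positive semidefiniteness of the Gram-type matrix of Taylor coefficients, which is exactly ${\bf I}_n - {\bf S}_n{\bf S}_n^*$. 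I expect this identification — matching the $(i,j)$ block of the coefficient matrix of $K_S$ against $({\bf I}_n-{\bf S}_n{\bf S}_n^*)_{ij}$ — to be a routine but slightly fiddly bookkeeping computation with the convolution formula \eqref{2.1}.

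\medskip

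\emph{$(4)\Rightarrow(1)$.} Finally, from ${\bf I}_n-{\bf S}_n{\bf S}_n^*\ge 0$ for all $n$, the top-left $1\times 1$ corner gives $|S_0|\le 1$; more importantly, positivity of all the ${\bf S}_n$ blocks reassembles into positivity of the kernel $K_S$, hence (taking $q=p$) $\sum_k p^k(1-|S(p)|^2)\bar p^k = (1-|S(p)|^2)\cdot\frac{1}{1-|p|^2}\ge 0$ after using that $p$ and $\bar p$ commute — wait, they do: $p\bar p=|p|^2=\bar p p$. So $K_S(p,p) = \frac{1-|S(p)|^2}{1-|p|^2}\ge 0$, forcing $|S(p)|\le 1$. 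That $S$ is slice regular on $\mathbb B$ is already assumed in $(4)$ (the condition $\limsup|S_k|^{1/k}\le 1$ follows from boundedness of the Toeplitz matrices, giving $|S_k|\le 1$ hence convergence on $\mathbb B$).

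\medskip

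\emph{Main obstacle.} The genuinely delicate step is $(1)\Rightarrow(2)$: proving $M_S$ is a contraction. Over $\mathbb C$ this is immediate from the integral definition of the $H^2$ norm because multiplication is pointwise; over $\mathbb H$ the $\star$-product is \emph{not} pointwise multiplication, and the substitute formula \eqref{2.3} involves the $p$-dependent conjugation $\widetilde{p}=S(p)^{-1}pS(p)$. Controlling the integral \eqref{3.3} of $|(S\star f)(re^{I\theta})|^2$ requires understanding that, for $r$ fixed, the map $e^{I\theta}\mapsto \widetilde{p}$ sends the circle of radius $r$ in $\mathbb C_I$ onto a (generally non-planar) curve on the sphere $|p|=r$, and one must argue that integrating $|f|^2$ over this curve is bounded by integrating over the round circle — which is where one needs the sharper fact that $f\in\rH$ controls $\sup$ over \emph{all} slices uniformly, or alternatively one works slice by slice after noting $\widetilde p$ stays in the two-dimensional subspace spanned by $I$ and the imaginary part of $S(p)$ only when... — in short, the cleanest route may be to avoid the integral entirely and prove $(1)\Rightarrow(3)$ directly via the Splitting Lemma, reducing to the classical complex Schur result on each slice $\mathbb C_I$ and then reassembling, with $(3)\Rightarrow(2)$ following from the reproducing kernel functions spanning a dense subspace. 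I would pursue that alternative if the change-of-variables argument proves awkward.
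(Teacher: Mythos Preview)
You have correctly identified the delicate step as $(1)\Rightarrow(2)$, and your diagnosis of the obstacle is accurate --- but your proposed fix does not work. With $(S\star f)(p)=S(p)\,f\big(S(p)^{-1}pS(p)\big)$ the bound $|S|\le 1$ controls only the first factor, leaving you to estimate $\int_0^{2\pi}|f(\widetilde p(\theta))|^2\,d\theta$ along the curve $\widetilde p(\theta)=S(re^{I\theta})^{-1}re^{I\theta}S(re^{I\theta})$, which does not lie in any single slice $\mathbb C_J$; there is no measure-preserving change of variables back to a planar circle, and membership of $f$ in $\rH$ gives no direct control of $|f|^2$ along such a curve. The paper's resolution is to reverse the order of the factors: apply \eqref{2.3} to $f\star S$ instead, getting $(f\star S)(p)=f(p)\,S\big(f(p)^{-1}pf(p)\big)$ and hence $|(f\star S)(p)|\le|f(p)|$ \emph{pointwise}, so that $\|f\star S\|_{\rH}\le\|f\|_{\rH}$ follows directly from \eqref{3.3} with no reparametrization needed. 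This shows that \emph{right} $\star$-multiplication by $S$ is contractive. The passage to left multiplication is a slice-conjugate bootstrap: since $(f\star S)^c=S^c\star f^c$ and $\|g^c\|_{\rH}=\|g\|_{\rH}$, the operator $M_{S^c}$ is a contraction; the already-established implications $(2)\Rightarrow(3)\Rightarrow(1)$ then give $S^c\in\mathcal R(\mathbb B,\overline{\mathbb B})$, and rerunning the same argument with $S^c$ in place of $S$ yields that $M_S=M_{(S^c)^c}$ is contractive. This conjugate trick is the idea you are missing; your Splitting-Lemma alternative would also have to contend with the fact that $K_S(p,q)$ couples different slices.

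Two smaller points. In $(2)\Rightarrow(3)$ your ``eigenvector'' formulation is not quite right in the quaternionic setting: $M_S^*k_{\rH}(\cdot,q)$ is the function $p\mapsto\sum_k p^k\,\overline{S(q)}\,\bar q^k$, with $\overline{S(q)}$ sandwiched between $p^k$ and $\bar q^k$ rather than appearing as a right scalar multiple of $k_{\rH}(\cdot,q)$ --- but your subsequent quadratic-form computation goes through once this correct formula is used. And your route $(3)\Rightarrow(4)\Rightarrow(1)$ via ``reassembling'' Toeplitz positivity into kernel positivity is left vague; the paper avoids this by linking $(4)$ directly to $(2)$: one computes $\|f\|_{\rH}^2-\|M_S^*f\|_{\rH}^2={\bf f}^*({\bf I}_n-{\bf S}_n{\bf S}_n^*){\bf f}$ for polynomials $f$ of degree $\le n$ and then invokes density of polynomials in $\rH$.
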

\begin{proof} We first remark that the operator $M_S$ can be defined via formula \eqref{2.2}
which does not assume any regularity of $S$. However, if $M_S$ maps $\rH$ into itself, then
the function $S=M_S 1$ belongs to $\rH$ and hence is slice regular.

\smallskip

{\em Proof of  $(2)\Longrightarrow(3)$}: Let us assume that $M_S: \, \rH\to \rH$ is a
contraction. Combining formulas \eqref{2.2} and \eqref{3.4} gives
$$
M_Sk_{\rH}(\cdot,q)=\sum_{j=0}^\infty p^kS(p)\bar q^k
$$
which together with reproducing kernel property \eqref{3.5} implies
\begin{align}
\left(M_S^*k_{\rH}(\cdot,q)\right)(p)&=\left\langle M_S^*k_{\rH}(\cdot,q), \, k_{\rH}(\cdot,p)
\right\rangle_{\rH}\notag\\
&=\left\langle k_{\rH}(\cdot,q), \, S\star
k_{\rH}(\cdot,p)\right\rangle_{\rH}=
\sum_{k=0}^\infty p^k\overline{S(q)}\bar q^k,
\label{3.10}
\end{align}
and subsequently,
$$
\left\langle (I-M_SM_S^*)k_{\rH}(\cdot,q), \, k_{\rH}(\cdot,p)\right\rangle_{\rH}=
\sum_{k=0}^\infty p^k(1-S(p)\overline{S(q)})\bar q^k.
$$
Therefore, for any function $f\in\rH$ of the form
\begin{equation}
\label{3.11}
f=\sum_{i=1}^r k_{\rH}(\cdot, p_i)\alpha_i,\qquad
r\in{\mathbb N}, \; p_i\in\mathbb B, \; \alpha_i\in\mathbb H,
\end{equation}
we have
\begin{align}
\left\langle (I-M_SM_S^*)f, \, f\right\rangle_{\rH}&=
\left\langle f, \, f\right\rangle_{\rH}-
\left\langle M_S^*f, \, M_S^*f\right\rangle_{\rH}\notag \\
&=\sum_{i,j=1}^r \overline{\alpha}_i k_{\rH}(p_i,p_j)\alpha_j
-\sum_{i,j=1}^r\sum_{k=0}^\infty \overline{\alpha}_i
p_i^k S(p_i)\overline{S(p_j)}\overline{p}_i^k\alpha_j\notag \\
&=\sum_{i,j=1}^r \overline{\alpha}_i K_S(p_i,p_j)\alpha_j.\label{3.12}
\end{align}
Since $M_S: \, \rH\to \rH$ is a contraction,
the inner product on the left hand side of \eqref{3.12} is nonnegative.
Consequently, the quadratic  form on the right hand side of
\eqref{3.12} is  nonnegative so that $K_S$ is a positive kernel.

\smallskip

{\em Proof of  $(3)\Longrightarrow(2)$}: Let us assume that the kernel \eqref{3.9} is positive on
$\mathbb B\times \mathbb B$. Observing that the function on the right side of \eqref{3.10}
belongs to $\rH$ (for each fixed $q\in\mathbb B$) with $\rH$-norm equal
$\frac{|S(q)|^2}{1-|q|^2}$, we define the operator $T: \, \rH\to \rH$ by letting
$T: \, k_{\rH}(\cdot,q)\mapsto {\displaystyle \sum_{k=0}^\infty p^k\overline{S(q)}\bar q^k}$
(that is, using the formula \eqref{3.8} obtained earlier for $M_S^*$) with subsequent extention
by linearity to functions $f$ of the form \eqref{3.11} and then, since such functions are dense in
$\rH$, extending by continuity to all of $\rH$. Due to this density, the calculation \eqref{3.12}
(with $T$ instead of $M_S^*$) shows that $T$ is a contraction on $\rH$. We then calculate
its adjoint getting $T^*f=S\star f=M_Sf$. Since $T$ is a contraction on $\rH$, its adjoint $M_S$ is
a
contraction as well.

\smallskip

{\em Proof of  $(3)\Longrightarrow(1)$}: If the kernel $K_S$ is positive on $\mathbb B\times\mathbb
B$, we have, in particular,
$$
0\le K_S(q,q)=\sum_{k=0}^\infty q^k(1-|S(q)|^2)\bar q^k=\frac{1-|S(q)|^2}{1-|q|^2}
$$
and therefore, $|S(q)|\le 1$ for every $q\in\mathbb B$. On the other hand, by implication
$(3)\Longrightarrow(2)$, the operator $M_S$ maps $\rH$ into itself and thus $S=M_S1$
belongs to $\rH\subset \mathcal R(\mathbb B)$.

\smallskip

{\em Proof of  $(1)\Longrightarrow(2)$}:  We now assume that $S$ is in
$\mathcal R(\mathbb  B,\overline{\mathbb B})$, i.e., $S$ is
slice regular and with $|S(p)|\le 1$ for all $p\in\mathbb B$.
By formulas \eqref{3.3} and \eqref{2.3}, we have for every $f\in\rH$ and every $I\in\mathbb S$,
\begin{align}
\| f\star S\|^2_{\rH}&=\sup_{0\le r< 1} \frac{1}{2\pi}\int_0^{2\pi}|f\star S(re^{I\theta})|^2
d\theta\notag \\
&= \sup_{0\le r< 1} \frac{1}{2\pi}\int_0^{2\pi}| f(re^{I\theta})S(f(re^{I\theta})^{-1}
re^{I\theta}f(re^{I\theta})|^2 d\theta\notag \\
& \leq \sup_{0\le r<1}\frac{1}{2\pi}\int_0^{2\pi} |f(r e^{I\theta})|^2 d\theta= \|
f\|^2_{\rH}.\label{3.13}
\end{align}
Let $S^c$ and $f^c$ be the slice conjugates of $S$ and $f$ defined via formula \eqref{2.5}. Due to
obvious relations  $(f^c)^c=f, \quad \|f\|_{\rH}=\|f^c\|_{\rH},\quad (f\star g)^c=g^c\star f^c$
holding for
all $f,g\in\rH$, we have from \eqref{3.13}
\begin{equation}
\|S^c\star f^c\|_{\rH}=\|(f\star S)^c\|_{\rH}=\|f\star S\|_{\rH}\le \| f\|_{\rH}=\| f^c\|_{\rH}.
\label{3.14}
\end{equation}
Therefore the operator $M_{S^c}: \, f\mapsto S^c\star f$ is a contraction on $\rH$. By implications
$(2)\Longrightarrow(3)\Longrightarrow(1)$ which have been already proved, we conclude that $S^c\in\mathcal
R(\mathbb  B,\overline{\mathbb B})$.
We then apply \eqref{3.14} to $S^c$ rather than to $S$ concluding that the operator $M_{(S^c)^c}=M_S$
is a contraction on $\rH$.

\smallskip

{\em Proof of  $(2)\Longrightarrow(4)$}: The proof is similar to that of $(2)\Longleftrightarrow(4)$
with the only difference that
instead of functions of the form \eqref{3.11} we will use slice regular polynomials, namely the polynomials with coefficients written on the right.
We first assume
(2). The calculation analogous to that
in \eqref{3.10} shows that for $S$ with the Taylor series as in \eqref{1.4},
$$
M_S^*: \, p^k\mapsto \sum_{j=0}^k p^j\overline{S}_{k-j}\quad\mbox{for all}\quad k\ge 0
$$
which extends by linearity to
$$
M_S^*: \, f(p)=\sum_{k=0}^n p^k f_k\mapsto \sum_{k=0}^np^k\left(
\sum_{j=k}^{n}\overline{S}_{j-k}f_j\right).
$$
Letting ${\bf f}:=\begin{bmatrix} f_0 & f_1 & \ldots & f_n\end{bmatrix}^\top$
we  get the following analog of \eqref{3.12} in terms of the matrix ${\bf S}_n$ from \eqref{1.4}:
\begin{equation}
\|f\|_{\rH}^2-\|M_S^*f\|_{\rH}^2=\sum_{k=0}^n
|f_k|^2-\sum_{k=0}^n\left|\sum_{j=k}^{n}\overline{S}_{j-k}f_j \right|^2
={\bf f}^*\left({\bf  I}_n-{\bf S}_n{\bf S}_n^*\right){\bf f}.
\label{3.15}
\end{equation}
If $M_S$ is a contraction on $\rH$, the latter expression is nonnegative for every vector ${\bf
f}\in\mathbb H^{n+1}$
and therefore the matrix ${\bf  I}_n-{\bf S}_n{\bf S}_n^*$ is positive semidefinite. Conversely, if
this matrix is positive
semidefinite for each $n\ge 1$, then equality \eqref{3.15} shows that  $M_S^*$ acts contractively
(in $\rH$-metric) on any
polynomial. Since the polynomials are dense in $\rH$, the operators $M_S^*$ and $M_S$ are
contractions on the whole $\rH$.
\end{proof}
We point out several consequences of the last theorem.
\begin{corollary}
Let $S: \,V\to\mathbb H$ be such that the kernel \eqref{3.9} is positive on $V\times V$, where
$V$ is an open subset of $\mathbb B$. Then $S$ extends to a function from $\mathcal R(\mathbb
B,\overline{\mathbb B})$.
\label{R:3.2}
\end{corollary}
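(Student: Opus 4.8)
The plan is to run the reproducing‑kernel argument from the proof of Theorem~\ref{T:3.1} almost verbatim, the only genuinely new ingredient being that the kernel functions based on an \emph{open} subset $V$ of $\mathbb B$ already span a dense subspace of $\rH$.

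First, taking $p=q$ in the hypothesis shows $0\le K_S(q,q)=\frac{1-|S(q)|^2}{1-|q|^2}$, so $|S(q)|\le1$ for every $q\in V$; this turns out to be automatic rather than assumed, but it is reassuring. Next, exactly as in the proof of $(3)\Longrightarrow(2)$ in Theorem~\ref{T:3.1}, I would define a map $T$ on the (right) linear span of $\{k_{\rH}(\cdot,q):\,q\in V\}$ by
\[
T:\ k_{\rH}(\cdot,q)\ \longmapsto\ \sum_{k=0}^\infty p^k\,\overline{S(q)}\,\bar q^k ,
\]
whose right‑hand side lies in $\rH$ because its squared norm equals $|S(q)|^2/(1-|q|^2)<\infty$. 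The computation \eqref{3.12}, carried out with $T$ in place of $M_S^*$ and with all nodes taken inside $V$, gives, for $f=\sum_{i=1}^r k_{\rH}(\cdot,p_i)\alpha_i$,
\[
\|f\|_{\rH}^2-\|Tf\|_{\rH}^2=\sum_{i,j=1}^r \overline{\alpha}_i\,K_S(p_i,p_j)\,\alpha_j\ \ge\ 0 ,
\]
since $K_S\succeq0$ on $V\times V$. Hence $T$ is well defined and contractive on that span.

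The crucial additional point is density: if $f\in\rH$ is orthogonal to $k_{\rH}(\cdot,q)$ for every $q\in V$, then $f(q)=\langle f,k_{\rH}(\cdot,q)\rangle_{\rH}=0$ throughout the open set $V$, whence $f\equiv0$ on $\mathbb B$ by the uniqueness theorem for slice regular functions. So the span of $\{k_{\rH}(\cdot,q):\,q\in V\}$ is dense in $\rH$ and $T$ extends to a contraction on all of $\rH$. Passing to the adjoint and using the reproducing property as in \eqref{3.10}, one finds $(T^*f)(q)=\sum_{k\ge0}q^k S(q)f_k$ for $q\in V$ and $f(p)=\sum_k p^k f_k\in\rH$; in particular $\widetilde S:=T^*1\in\rH\subseteq\mathcal R(\mathbb B)$ satisfies $\widetilde S(q)=S(q)$ for all $q\in V$, so $\widetilde S$ is a slice regular extension of $S$ to $\mathbb B$.

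It remains to check that $\widetilde S\in\mathcal R(\mathbb B,\overline{\mathbb B})$. For a polynomial $f$ the function $\widetilde S\star f$ is slice regular on $\mathbb B$ and, by \eqref{2.2}, coincides with $T^*f$ on $V$; by the uniqueness theorem $T^*f=\widetilde S\star f=M_{\widetilde S}f$ on all of $\mathbb B$, and since polynomials are dense in $\rH$ this identifies $M_{\widetilde S}$ with the contraction $T^*$. The implication $(2)\Longrightarrow(1)$ of Theorem~\ref{T:3.1} then yields $|\widetilde S(p)|\le1$ for every $p\in\mathbb B$, so $\widetilde S\in\mathcal R(\mathbb B,\overline{\mathbb B})$ extends $S$, as claimed. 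I expect the only real subtleties to be the bookkeeping with the noncommutative inner product when computing $T^*$ and matching it to $\star$‑multiplication, together with the routine but indispensable appeals to the uniqueness theorem needed to transfer everything from $V$ to all of $\mathbb B$; everything else is a transcription of the proof of Theorem~\ref{T:3.1}.
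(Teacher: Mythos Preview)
Your proposal is correct and follows exactly the approach the paper indicates: the paper's proof is the single sentence that the argument for $(3)\Rightarrow(2)$ in Theorem~\ref{T:3.1} goes through verbatim once one notes that kernels $k_{\rH}(\cdot,q)$ with $q\in V$ are already dense in $\rH$, and you have simply written out that density argument (via the reproducing property and the uniqueness theorem) and the identification $T^*=M_{\widetilde S}$ in full. There is no substantive difference in strategy.
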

\begin{proof}
The proof is the same as that of implication $(3)\Longrightarrow(2)$ in Theorem \ref{T:3.1} once we
observe
that the functions of the form \eqref{3.11} with $p_i\in V$ (rather than in $\mathbb B$) are still
dense in $\rH$.
\end{proof}
In analogy to the classical case we may introduce the Hardy space ${\rm H}^\infty(\mathbb B)$ of
bounded slice regular
functions on $\mathbb B$ with norm $\|S\|_{\infty}=\sup_{p\in\mathbb B}|S(p)|<\infty$ and the space
${\mathcal M}(\rH)$ of
bounded multipliers, that is, the functions $S: \, \mathbb B\to\mathbb H$ such that the operator
$M_S$ of left
$\star$--multiplication \eqref{3.8} is bounded on $\rH$. By the very definition,
$\mathcal R(\mathbb  B,\overline{\mathbb B})$ is the closed
unit ball of ${\rm H}^\infty(\mathbb B)$. The following conclusion is a consequence of Theorem
\ref{T:3.1}.
\begin{corollary}
${\rm H}^\infty(\mathbb B)={\mathcal M}(\rH)$ and $\|S\|_\infty =\|M_S\|$ for every
$S\in {\rm H}^\infty(\mathbb B)$.
\label{R:3.3}
\end{corollary}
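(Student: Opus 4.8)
The plan is to reduce the statement entirely to Theorem \ref{T:3.1} by a scaling argument, exploiting that $\star$-multiplication is real-homogeneous: reading off \eqref{2.1}, for any real $c\ge 0$ the Taylor coefficients of $cS$ are $cS_k$, which commute through the convolution sums, so $M_{cS}=cM_S$ as operators (whenever either side is defined). With this in hand, both the set equality ${\rm H}^\infty(\mathbb B)={\mathcal M}(\rH)$ and the norm identity $\|S\|_\infty=\|M_S\|$ become two short normalization arguments.

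First I would prove ${\rm H}^\infty(\mathbb B)\subseteq{\mathcal M}(\rH)$ together with $\|M_S\|\le\|S\|_\infty$. If $S\equiv 0$ this is trivial; otherwise set $c=\|S\|_\infty>0$, so that $S/c$ is slice regular on $\mathbb B$ with $|(S/c)(p)|\le 1$ everywhere, i.e. $S/c\in\mathcal R(\mathbb B,\overline{\mathbb B})$. By the implication $(1)\Longrightarrow(2)$ of Theorem \ref{T:3.1}, $M_{S/c}$ is a contraction on $\rH$; since $M_{S/c}=c^{-1}M_S$, this gives $\|M_S\|\le c$, in particular $S\in{\mathcal M}(\rH)$. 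Conversely, to obtain ${\mathcal M}(\rH)\subseteq{\rm H}^\infty(\mathbb B)$ with $\|S\|_\infty\le\|M_S\|$, I would start from $M_S$ bounded on $\rH$ and invoke the remark at the beginning of the proof of Theorem \ref{T:3.1}: since $M_S$ maps $\rH$ into itself, $S=M_S 1\in\rH$, hence $S$ is slice regular on $\mathbb B$. Put $c=\|M_S\|$; if $c=0$ then $S=M_S1=0$ and we are done, while if $c>0$ then $M_{S/c}=c^{-1}M_S$ is a contraction, so $(2)\Longrightarrow(1)$ of Theorem \ref{T:3.1} yields $|(S/c)(p)|\le 1$ on $\mathbb B$, i.e. $\|S\|_\infty\le c<\infty$ and $S\in{\rm H}^\infty(\mathbb B)$. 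Combining the two inclusions gives ${\rm H}^\infty(\mathbb B)={\mathcal M}(\rH)$, and combining the two inequalities gives $\|S\|_\infty=\|M_S\|$ for every $S\in{\rm H}^\infty(\mathbb B)$.

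I do not expect a genuine obstacle here: the only point deserving an explicit line is the homogeneity identity $M_{cS}=cM_S$ (valid because $c$ is a real scalar and hence commutes with all quaternionic coefficients), and the only thing to be mildly careful about is the degenerate case $S\equiv 0$ (equivalently $\|M_S\|=0$), which must be set aside before dividing by $c$ but is handled trivially on its own. Everything else is a direct citation of the already-established equivalence $(1)\Longleftrightarrow(2)$ in Theorem \ref{T:3.1}.
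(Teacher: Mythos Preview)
Your proposal is correct and follows essentially the same scaling argument as the paper: normalize by $\|S\|_\infty$ (respectively $\|M_S\|$) to reduce to the equivalence $(1)\Longleftrightarrow(2)$ of Theorem~\ref{T:3.1}, using $M_{cS}=cM_S$ for real $c$. The paper's proof is terser (it handles one direction explicitly and dismisses the other as ``much the same way''), whereas you spell out both directions and the degenerate case $S\equiv 0$, but the substance is identical.
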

\begin{proof}
If $S\in {\rm H}^\infty(\mathbb B)$ with  $\|S\|_{\infty}=r>0$, then the scaled function
$\frac{1}{r}S$ belongs to $\mathcal R(\mathbb  B,\overline{\mathbb B})$
and by Theorem \ref{T:3.1}, the operator $M_{\frac{1}{r}S}: \, \rH\to \rH$
is a contraction. Since $M_{\frac{1}{r}S}=\frac{1}{r}M_S$, we conclude that $\|M_S\|=\|r
M_{\frac{1}{r}S}\|\le r$ and  thus,
$S\in {\mathcal M}(\rH)$ with $\|M_S\|\le \|S\|_\infty$. In particular, ${\rm H}^\infty(\mathbb
B)\subset{\mathcal M}(\rH)$.
The reverse inclusion and the reverse norm inequality is established in much the same way.
\end{proof}
As another consequence of Theorem \ref{T:3.1}, we get the necessity part in Theorem \ref{T:1.2}.
\begin{corollary}
If the problem $\PR$ has a solution, then the Pick matrix $P$ is positive semidefinite.
\label{C:3.6}
\end{corollary}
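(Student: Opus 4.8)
The plan is to deduce Corollary \ref{C:3.6} directly from the equivalence $(1)\Longleftrightarrow(3)$ in Theorem \ref{T:3.1}, using the reproducing kernel $k_{\rH}$ to realize the Pick matrix $P$ as a compression of the positive kernel $K_S$. Suppose $S\in\mathcal R(\mathbb B,\overline{\mathbb B})$ solves the problem $\PR$, so that $S(p_i)=s_i$ for $i=1,\ldots,n$. By Theorem \ref{T:3.1}, the kernel
\[
K_S(p,q)=\sum_{k=0}^\infty p^k\bigl(1-S(p)\overline{S(q)}\bigr)\bar q^k
\]
is positive on $\mathbb B\times\mathbb B$. Positivity of the kernel means precisely that for any finite collection of points the associated block matrix is positive semidefinite; applying this to the interpolation nodes $p_1,\ldots,p_n$ gives that $\bigl[K_S(p_i,p_j)\bigr]_{i,j=1}^n\ge 0$.

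The key observation is that $K_S(p_i,p_j)$ coincides with the $(i,j)$ entry of the Pick matrix. Indeed, substituting $S(p_i)=s_i$ and $S(p_j)=s_j$ into \eqref{3.9} yields
\[
K_S(p_i,p_j)=\sum_{k=0}^\infty p_i^k\bigl(1-s_i\bar s_j\bigr)\bar p_j^k,
\]
which is exactly $P_{ij}$ as defined in \eqref{1.6}. Hence $P=\bigl[K_S(p_i,p_j)\bigr]_{i,j=1}^n\ge 0$, which is the desired conclusion.

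There is essentially no obstacle here: the statement is a direct specialization of the already-established positivity of $K_S$. The only point requiring a word of care is that the entries of $P$ lie in the noncommutative algebra $\mathbb H$, so one must make sure the notion "positive semidefinite matrix over $\mathbb H$" used in Theorem \ref{T:3.1} is the same one appearing in Theorem \ref{T:1.2}; this is guaranteed by the remark following Theorem \ref{T:1.1} that the relevant matrix notions over $\mathbb H$ parallel those over $\mathbb C$. One may also note in passing that convergence of the series defining $P_{ij}$ is automatic because $|p_i|,|p_j|<1$ and $|s_i\bar s_j|\le 1$, as already remarked after \eqref{1.6}. Thus the corollary follows at once, and it supplies the necessity direction of Theorem \ref{T:1.2}.
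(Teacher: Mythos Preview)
Your proof is correct and follows essentially the same route as the paper: invoke Theorem \ref{T:3.1} to get $K_S\succeq 0$, evaluate at the interpolation nodes, and identify the resulting matrix with $P$ via the conditions $S(p_i)=s_i$. The additional remarks about the quaternionic notion of positive semidefiniteness and the convergence of the series are harmless clarifications but not needed for the argument.
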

\begin{proof}
Let $S$ be a solution to the problem $\PR$. Since $S\in\mathcal R(\mathbb B,\overline{\mathbb B})$,
the kernel $K_S$ is positive on $\mathbb B\times \mathbb B$. Then the matrix
$\left[K_S(p_i,p_j)\right]_{i,j=1}^n$ is positive
semidefinite. Since $S$ satisfies the interpolation conditions \eqref{1.5},
\begin{equation}
K_S(p_i,p_j)=\sum_{k=0}^\infty p_i^k(1-S(p_i)\overline{S(p_j)})\overline{p}_j^k=\sum_{k=0}^\infty
p_i^k(1-s_i\overline{s}_j)\overline{p}_j^k.
\label{3.16}
\end{equation}
Comparing \eqref{3.16} with \eqref{1.6} we see that the matrix $\left[K_S(p_i,p_j)\right]_{i,j=1}^n$
is equal to the Pick matrix $P$
which is therefore positive semidefinite.
\end{proof}

\section{Characterization of solutions to the problem $\PR$ in terms of positive kernels}
\setcounter{equation}{0}

The classical complex-valued Nevanlinna-Pick
problem has been studied using different approaches, including in particular, the iterative Schur algorithm
\cite{nevan1}, the Commutatnt Lifting approach \cite{sarason}, the
Grassmanian approach \cite{ballhelton}, Potapov's method of fundamental matrix
inequalities \cite{potap} and its far-reaching extension the Abstract Interpolation Problem approach \cite{kky, khyu}.
Each method has its strengths and weaknesses; so it would be interesting to clarify how each of
them  extends to the quaternionic setting. The method we chose
for the present paper has its origins in \cite{potap}. The
first step is carried out in the next  theorem which characterizes
solutions of the problem $\PR$ in terms of positive kernels of special structure.

\begin{theorem}
A function  $S:\B\to {\mathbb H}$ is a solution to the problem $\PR$ if and only if the
following kernel is positive on $\B\times \B$:
\begin{equation}
\widehat K_S(p,q):=\begin{bmatrix} P & B^S(q)^* \\  B^S(p) & K_S(p,q) \end{bmatrix}\succeq 0,
\label{4.1}
\end{equation}
where $P$ is given in \eqref{1.6} and where
\begin{align}
B^S(p)&=\begin{bmatrix} B^S_1(p) & B^S_2(p) & \ldots & B^S_n(p)\end{bmatrix}\label{4.2}\\
&=\sum_{k=0}^\infty p^k\begin{bmatrix} (1-S(p)\bar s_1) \bar p_1^k &
(1-S(p)\bar s_2) \bar p_2^k  & \ldots & (1-S(p)\bar s_n) \bar p_n^k\end{bmatrix}.\notag
\end{align}
\label{T:2.3}
\end{theorem}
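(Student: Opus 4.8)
The plan is to prove both implications by exploiting the reproducing kernel machinery already set up for $\rH$ together with Proposition \ref{P:2.1}(2), which handles exactly the block-kernel Schur complement structure appearing in \eqref{4.1}. The key observation is that the bottom-right entry $K_S(p,q)$ in \eqref{4.1} is the kernel \eqref{3.9} from Theorem \ref{T:3.1}, the top-left entry $P$ is the Pick matrix \eqref{1.6}, and the off-diagonal block $B^S(p)$ in \eqref{4.2} is built from $S$ and the interpolation data; moreover one checks directly from \eqref{4.2} and \eqref{1.6} that $B^S(p_i)$ is the $i$-th row of $P$ precisely when $S(p_i)=s_i$. So the whole question reduces to comparing the positivity of the block kernel $\widehat K_S$ with the positivity of $K_S$ plus the interpolation conditions.

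First I would prove the ``if'' direction. Assume $\widehat K_S\succeq 0$ on $\B\times\B$. Restricting to the single point $q=p$ and looking at diagonal $2\times 2$ principal submatrices (or more precisely applying the definition of positive kernel to a single node and extracting the block structure), positivity of $\widehat K_S$ forces in particular $K_S(p,q)\succeq 0$, hence by Theorem \ref{T:3.1} (implication $(3)\Rightarrow(1)$) $S\in\mathcal R(\B,\overline{\B})$. It remains to verify the interpolation conditions \eqref{1.5}. For this I would evaluate the block kernel at the interpolation nodes $p_1,\dots,p_n$: positivity of $\widehat K_S$ at those nodes together with the structure of $B^S$ should, via an explicit finite-dimensional argument, force $B^S(p_i)$ to coincide with the $i$-th row of $P$. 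Concretely, writing $s_i':=S(p_i)$, one has $B^S(p_i)=\bigl[\sum_k p_i^k(1-s_i'\bar s_j)\bar p_j^k\bigr]_j$ while the $i$-th column of $P$ is $\bigl[\sum_k p_j^k(1-s_j\bar s_i')\bar p_i^k\bigr]$ after conjugation — the positivity of the $(n+1)\times(n+1)$ matrix obtained by adding the node $p_i$ to the $P$-block, whose $(i, n+1)$ and diagonal entries must be consistent, pins down $s_i'=s_i$. This is the step I expect to be the main obstacle: making the passage ``positivity of the bordered matrix $\Rightarrow$ $s_i'=s_i$'' clean in the noncommutative setting, where one cannot just cancel scalars; I would do it by noting that the $2\times 2$ submatrix indexed by $\{i,n+1\}$ (in the enlarged system) has the form $\left[\begin{smallmatrix} P_{ii} & * \\ * & P_{ii}\end{smallmatrix}\right]$ with equal diagonal entries $P_{ii}=\frac{1-|s_i|^2}{1-|p_i|^2}$ and off-diagonal entry $\frac{1-s_i'\bar s_i}{1-|p_i|^2}$, whose $2\times 2$ positivity forces $|1-s_i'\bar s_i|^2\le (1-|s_i|^2)^2$, and combined with $|s_i'|\le 1$ this yields $s_i'=s_i$.

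For the ``only if'' direction, suppose $S$ solves $\PR$. Then $S\in\mathcal R(\B,\overline{\B})$, so by Theorem \ref{T:3.1} the kernel $K_S$ is positive on $\B\times\B$. The interpolation conditions give $B^S(p_i)=$ ($i$-th row of $P$) as computed above, and more importantly $P=\bigl[K_S(p_i,p_j)\bigr]_{i,j=1}^n$ by the computation in \eqref{3.16}. The clean way to see $\widehat K_S\succeq 0$ is to recognize it as a compression/bordering of the positive kernel $K_S$: consider the kernel $\widetilde K_S$ on $(\B\cup\{p_1,\dots,p_n\})\times(\text{same})$ given by $K_S$ itself, which is positive since $K_S\succeq 0$; then $\widehat K_S(p,q)$ is obtained by selecting the ``rows/columns'' indexed by $p_1,\dots,p_n$ for the first block and by general $p,q\in\B$ for the second block, and such a selection of a positive kernel is again positive (this is just the statement that any principal submatrix, possibly with repeated indices, of a positive semidefinite matrix is positive semidefinite). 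If a more hands-on argument is wanted, one invokes Proposition \ref{P:2.1}(2): when $P$ is positive \emph{definite}, $\widehat K_S\succeq 0$ is equivalent to the Schur complement $K_S(p,q)-B^S(p)P^{-1}B^S(q)^*\succeq 0$, and one computes that this Schur complement is itself the kernel $K_{S_0}$ associated with a suitable $S_0\in\mathcal R(\B,\overline{\B})$ — but since we only need $\succeq 0$ and $P$ may be singular, the cleaner route is the direct compression argument, passing to $P+\varepsilon{\bf I}_n$ and letting $\varepsilon\to 0^+$ if one insists on using Proposition \ref{P:2.1}(2) verbatim. I would present the compression argument as the main proof and mention the Schur-complement reformulation as a remark, since the latter is what gets used in Sections 5 and 6.
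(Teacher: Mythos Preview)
Your ``only if'' direction is correct and matches the paper's argument: one recognizes the block matrix $[\widehat K_S(q_i,q_j)]_{i,j=1}^r$ as a rearrangement of the matrix $[K_S(\zeta_\alpha,\zeta_\beta)]$ built on the $(n+1)r$ points obtained by interleaving $p_1,\dots,p_n$ (each repeated $r$ times) with $q_1,\dots,q_r$, and positivity of $K_S$ (plus the interpolation conditions, which give $K_S(p_i,p_j)=P_{ij}$ and $K_S(q_\ell,p_j)=B_j^S(q_\ell)$) does the rest.

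Your ``if'' direction, however, has a genuine gap. The $2\times 2$ submatrix of $\widehat K_S(p_i,p_i)$ indexed by $\{i,n+1\}$ does \emph{not} have the form you claim. With $s_i':=S(p_i)$, the bottom-right entry is $K_S(p_i,p_i)=\frac{1-|s_i'|^2}{1-|p_i|^2}$, not $P_{ii}$; and the off-diagonal entry $B_i^S(p_i)=\sum_kp_i^k(1-s_i'\bar s_i)\bar p_i^k$ is \emph{not} $\frac{1-s_i'\bar s_i}{1-|p_i|^2}$ unless $1-s_i'\bar s_i$ happens to commute with $p_i$. These errors are fatal to your inequality: taking $s_i=0$, your claimed matrix becomes $\frac{1}{1-|p_i|^2}\left[\begin{smallmatrix}1&1\\1&1\end{smallmatrix}\right]$, which is positive semidefinite for every $s_i'$, so nothing forces $s_i'=0$. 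The $2\times 2$ idea \emph{can} be made to work, but it requires the correct entries: splitting $c=s_i'\bar s_i=c_0+c_\perp$ relative to $\mathbb C_I$ (where $p_i\in\mathbb C_I$), one finds that positivity rearranges to $(1-\mu)|c_\perp|^2\ge|s_i-s_i'|^2$ with $\mu=\frac{(1-|p_i|^2)^2}{|1-\bar p_i^2|^2}\in(0,1]$, and a Cauchy--Schwarz estimate in $\mathbb C_I$-coordinates gives $|c_\perp|\le\min(|s_i|,|s_i'|)\,|s_i-s_i'|$, forcing $s_i'=s_i$. None of this is in your proposal.

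The paper avoids this finite-dimensional quaternionic computation entirely by an operator-theoretic argument adapted from \cite{bb,bol}. Positivity of the $2\times 2$ kernel $\left[\begin{smallmatrix}P_{ii}&\overline{B_i^S(q)}\\B_i^S(p)&K_S(p,q)\end{smallmatrix}\right]$ is reformulated as positivity of an operator $\mathbf P_i$ on $\mathbb H\oplus\rH$; this $\mathbf P_i$ is then recognized as one Schur complement of a $3\times 3$ operator matrix $\widehat{\mathbf P}_i$ whose \emph{other} Schur complement has a zero diagonal block, forcing the off-diagonal $T_2-M_S^*T_1$ to vanish, where $T_1\alpha=k_{\rH}(\cdot,p_i)\alpha$ and $T_2\alpha=k_{\rH}(\cdot,s_i^{-1}p_is_i)\bar s_i\alpha$. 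Evaluating at $p=0$ yields $\bar s_i=\overline{S(p_i)}$. This route uses the reproducing-kernel structure of $\rH$ and never touches the explicit value $B_i^S(p_i)$, which is what makes it cleaner in the noncommutative setting.
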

\begin{proof}
For the necessity part we modify the argument used in the previous section. If $S$ is a solution to
the problem $\PR$,
it belongs to $\mathcal R(\mathbb  B,\overline{\mathbb B})$ and therefore, the kernel $K_S$ is positive on
$\mathbb B\times \mathbb
B$.
Let us pick finitely many points $q_1,\ldots,q_r\in \mathbb B$ and let us consider the (positive
semidefinite) matrix
$R=\left[K_S(\zeta_i,\zeta_j)\right]_{i,j=1}^{(n+1)r}$ based on the $(n+1)r$ points $\zeta_j$ chosen
as follows:
$$
\zeta_j=\left\{\begin{array}{lll}p_i &{\rm if} & j=i \, {\rm mod}(n+1),\\ q_\ell &{\rm if} &
j=(n+1)\ell.
\end{array}\right.
$$
Since $S$ satisfies interpolation conditions \eqref{1.5}, the entries $K_S(p_i,p_j)$ in $R$ are
given as in \eqref{3.16}.
On the other hand, in view of \eqref{4.2},
\begin{equation}
K_S(q_i,p_j)=\sum_{k=0}^\infty q_i^k(1-S(q_i)\overline{S(p_j)})\overline{p}_j^k=\sum_{k=0}^\infty
q_i^k(1-S_i\overline{s}_j)\overline{p}_j^k=B_j^S(q_i).
\label{4.3}
\end{equation}
A careful but straightforward verification based on \eqref{3.16} and \eqref{4.3} confirms that
the matrix $R=\left[K_S(\zeta_i,\zeta_j)\right]_{i,j=1}^{(n+1)r}$ can be written in the block-matrix
form as
$$
R=\left[\widehat K_S(q_i,q_j)\right]_{i,j=1}^r
$$
where $\widehat K$ is the kernel defined in \eqref{4.1}. Since $R$ is positive semidefinite and the
points $q_1,\ldots,r$ were chosen
arbitrarily in $\mathbb B$, the kernel \eqref{4.1} is positive on $\mathbb B\times \mathbb B$.

\smallskip

The proof of the sufficiency part is based on the operator-theoretic argument involving Schur
complements and multiplication operators which is adapted from \cite{bb, bol}.
Let us assume that the kernel \eqref{4.1} is positive on $\B\times \B$. Then in
particular, the kernel $K_S$ is positive on  $\B\times \B$ and therefore, $S\in\mathcal R(\mathbb
B,\overline{\mathbb B})$.  Furthermore,  it
follows from \eqref{4.1} that the following $2\times 2$ matrix valued  kernel is positive
\begin{equation}
K_i(p,q)=\begin{bmatrix} P_{ii} & B^S_i(q)^* \\ B^S_i(p) & K_S(p,q)\end{bmatrix}\succeq 0
\label{4.4}
\end{equation}
for each $i=1,\ldots,n$.
The positivity condition \eqref{4.4} is equivalent to the positivity of
the operator
\begin{equation}
{\bf P}_i=\begin{bmatrix} P_{ii} & M_{B^S_i}^* \\  M_{B^S_i} & I-M_SM_S^*\end{bmatrix}: \;
\begin{bmatrix}\mathbb H \\ \rH \end{bmatrix}\to \begin{bmatrix}\mathbb H \\ \rH \end{bmatrix}
\label{4.5}
\end{equation}
due to the identity
$$
\left\langle {\bf P}_i
\begin{bmatrix}\alpha \\ k_{\rH}(\cdot,q)\beta \end{bmatrix}, \; \begin{bmatrix}\alpha^\prime \\
 k_{\rH}(\cdot,p)\beta^\prime\end{bmatrix}
\right\rangle_{\mathbb H\oplus\rH}
=\left\langle K_i(p,q)
\begin{bmatrix}\alpha \\ \beta\end{bmatrix}, \; \begin{bmatrix}\alpha^\prime \\
\beta^\prime\end{bmatrix}
\right\rangle_{\mathbb H^2}
$$
holding for all $\alpha,\alpha^\prime,\beta,\beta^\prime\in\mathbb H$ and all $p,q\in\mathbb B$, and
since
linear combinations of vectors of the form $\alpha\oplus k_{\rH}(\cdot,q)\beta$
($\alpha,\beta\in{\mathbb H}, \, q\in\mathbb B$) are dense in $\mathbb H\oplus\rH$.
\smallskip

We next fix $i\in\{1,\ldots,n\}$ and introduce two operators $T_1, \, T_2: \,
\mathbb H\to \rH$ as follows:
\begin{equation}
T_1\alpha=k_{\rH}(\cdot,p_i)\alpha\quad\mbox{and}\quad T_2\alpha =:\begin{cases} \,
k_{\rH}(\cdot,s_i^{-1}p_is_i)\bar s_i\alpha,\,\,{\rm if}\,\, s_i\not=0,\\
\; 0 ,\quad\quad \hspace{2.1cm} {\rm if}\,\,s_i=0.
\end{cases}
\label{4.6}
\end{equation}
Since $k_{\rH}$ is the reproducing kernel for $\rH$ we have
$$
T_1^*T_1-T_2^*T_2=\left\{\begin{array}{lll}k_{\rH}(p_i,p_i)-s_ik_{\rH}(s_i^{-1}p_is_i,s_i^{-1}p_is_i)\bar
s_i=\frac{1-|s_i|^2}{1-|p_i|^2} &\mbox{if}& s_i\neq 0, \\
k_{\rH}(p_i,p_i)=\frac{1}{1-|p_i|^2}&\mbox{if}& s_i=0,\end{array}\right.
$$
which being compared with \eqref{1.7} gives
$$
T_1^*T_1-T_2^*T_2=P_{ii}.
$$
We also observe from \eqref{4.2} that the function $B_i^S$ can be written as
$$
B_i^S=k_{\rH}(\cdot ,p_i)-S\star k_{\rH}(\cdot,s_i^{-1}p_is_i)\bar s_i,
$$
so that $M_{B^S_i}=T_1-M_ST_2$. Therefore, we can rewrite \eqref{4.5} as
$$
{\bf P}_i=\begin{bmatrix} T_1^*T_1-T_2^*T_2 & T_1^*-T_2^*M_S
\\ T_1-M_ST_2 & I-M_SM_S^*\end{bmatrix}.
$$
The operator ${\bf P}_i$ equals the Schur complement of the left top block
in the extended operator
$$
\widehat {\bf P}_i=\begin{bmatrix} I & T_2 & M_S^* \\ T_2^* & T_1^*T_1 & T_1^*\\
M_S & T_1 & I\end{bmatrix}: \, \begin{bmatrix}\rH \\ \mathbb H \\ \rH\end{bmatrix}\to
\begin{bmatrix}\rH \\ \mathbb H \\ \rH\end{bmatrix},
$$
and therefore, ${\bf P}_i\ge 0$ if and only if $\widehat {\bf P}_i\ge 0$. But then
the Schur complement of the right bottom block in $\widehat {\bf P}_i$ is also positive
semidefinite:
$$
\begin{bmatrix}I-M_S^*M_S & T_2-M_S^*T_1 \\ T_2^*-T_1^*M_S & T_1^*T_1-T_1^*T_1\end{bmatrix}
=\begin{bmatrix}I-M_S^*M_S & T_2-M_S^*T_1 \\ T_2^*-T_1^*M_S & 0 \end{bmatrix}\ge 0
$$
from which we conclude $T_2-M_S^*T_1=0$. We next use \eqref{3.10} and definitions
\eqref{4.6} to rewrite the last equality as
$$
0\equiv T_2-M_S^*T_1=k_{\rH}(p,s_i^{-1}p_is_i)\bar s_i-\sum_{k=0}^\infty p^k\overline{S(p_i)}\bar
p_i^k
$$
and finally, letting $p=0$ we get $\bar s_i=\overline{S(p_i)}$ which is equivalent to \eqref{1.5}.
Thus, $S$ solves the problem $\PR$.
\end{proof}
\begin{remark}
{\rm The positivity condition \eqref{4.1} implies $P\ge 0$; thus Theorem \ref{T:2.3} contains the
necessity part
of Theorem \ref{T:1.2}.}
\label{R:4.2}
\end{remark}

\begin{remark}
{\rm The Pick matrix $P$ of the problem $\PR$ satisfies the Stein equality
\begin{equation}
P-TPT^*=EE^*-NN^*
\label{4.8}
\end{equation}
where
\begin{equation}
T=\left[\begin{array}{ccc}p_1 &&0
\\ &\ddots & \\ 0 && p_n\end{array}\right],\quad
E=\left[\begin{array}{c}1\\ \vdots \\ 1 \end{array}\right],\quad
N=\left[\begin{array}{c}s_1 \\ \vdots\\ s_n\end{array}\right].
\label{4.9}
\end{equation}
The entry-wise verification of \eqref{4.8} is immediate. In fact, if $T$ is any square matrix
with right spectrum contained in $\B$, then the Stein equation $P-TPT^*=D$ has a unique solution
given by converging series  $P=\sum_{k\ge 0}T^kDT^{*k}$. In particular,
if $D=EE^*-NN^*$, this series produces $P$ as in \eqref{1.6}.}
\label{R:4.4}
\end{remark}
\begin{remark}
{\rm Let us note that the function \eqref{4.2} can be written in terms of \eqref{4.9}  as
\begin{equation}
B^S(p)=\sum_{k=0}^\infty p^k\left (E^*-S(p)N^*\right)T^{*k}
=\begin{bmatrix}1 & -S(p)\end{bmatrix}\star\left (\sum_{k=0}^\infty p^k\begin{bmatrix}E^* \\
N^*\end{bmatrix}T^{*k}\right)
\label{4.10}
\end{equation}
Therefore, all the entries in the kernel inequality \eqref{4.1} are defined in terms
of given $E$, $N$, $T$ and an unknown function $S$. The description of all functions $S$
satisfying the latter inequality does not rely on the specific formulas \eqref{1.6}, \eqref{4.9};
it will be established under the assumptions that (1) the right spectrum of $T$ is contained in $\B$
and
(2) the unique solution $P$ of the Stein equation  \eqref{4.8} is positive semidefinite.}
\label{R:2.4}
\end{remark}
We conclude this section with two results which substantially
simplify the subsequent analysis. The first one is about the
"consistency" of interpolation data set.
\begin{lemma}
Let us assume that the Pick matrix $P$ \eqref{1.6} is positive
semidefinite and that three interpolation nodes, say $p_1$, $p_2$
and $p_3$ belong to the same $2$-sphere:
\begin{equation}
p_i=x+yI_i,\quad (x,y\in{\mathbb R}, \; I_i\in{\mathbb S}, \; i=1,2,3).
\label{4.11}
\end{equation}
Then the three top rows in $P$ are left linearly dependent and the target values $s_1$, $s_2$ and
$s_3$
are related by
\begin{equation}
s_3=(I_2-I_1)^{-1}\left\{(I_2-I_3)s_1+(I_3-I_1)s_2\right\}.
\label{4.12}
\end{equation}
\label{L:3.6}
\end{lemma}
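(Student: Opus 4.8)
The statement has two parts: (i) the top three rows of $P$ are left linearly dependent, and (ii) the target values satisfy the rigidity relation \eqref{4.12}. My plan is to derive both from the positivity of $P$ together with the structure of the reproducing kernel $k_\rH$ on a $2$-sphere, exactly the mechanism exposed in Remark \ref{R:pr}.

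For part (i), I would start from the Stein representation $P=\sum_{k\ge 0}T^kDT^{*k}$ with $D=EE^*-NN^*$ from Remark \ref{R:4.4}, or equivalently write the $(i,j)$ entry as $P_{ij}=\langle k_\rH(\cdot,p_j), k_\rH(\cdot,p_i)\rangle_\rH - \langle k_\rH(\cdot,p_j)\bar s_j, k_\rH(\cdot,p_i)\bar s_i\rangle_\rH$ after recognizing $\sum_k p_i^k\bar p_j^k = k_\rH(p_i,p_j)$. Now invoke the key identity \eqref{3.6}: on the sphere through $p_1,p_2,p_3$ with imaginary units $I_1,I_2,I_3$ we have $\overline{p}_3^{\,n}=\overline{p}_1^{\,n}a + \overline{p}_2^{\,n}b$ for all $n\ge 0$, where $a=(I_1-I_2)^{-1}(I_3-I_2)$ and $b=(I_1-I_2)^{-1}(I_1-I_3)$ are fixed quaternions (coming from the Vandermonde dependence of Proposition \ref{R:3.0}). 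Multiplying the defining series $P_{3j}=\sum_k p_3^k(1-s_3\bar s_j)\bar p_j^k$ is not directly what I want; instead I want the dependence among rows, i.e. among the index $i$. So I should use the conjugate relation $p_3^{\,n}=\bar a\,p_1^{\,n}+\bar b\,p_2^{\,n}$? That is false in general because left and right coefficients differ. The clean route is: the third row of $P$ is $(P_{3j})_j$ with $P_{3j}=\sum_k p_3^k(1-s_3\bar s_j)\bar p_j^k$, and I want scalars $\lambda_1,\lambda_2$ (on the left) with $\lambda_1 P_{1j}+\lambda_2 P_{2j}=P_{3j}$ for all $j$. Taking $\lambda_\ell$ to be the conjugates of the coefficients in $p_3^n=\overline{a}\,p_1^n+\cdots$—here I must be careful—so the honest statement is that the left-dependence of the rows of $P$ is equivalent (via $P\ge 0$ and taking a Cholesky-type factorization $P=X^*X$) to the left-dependence of the corresponding columns of $X$; and those columns are built from $k_\rH(\cdot,p_i)$ and $k_\rH(\cdot,p_i)\bar s_i$, whose span on the sphere collapses by Proposition \ref{R:3.0} and \eqref{3.6}. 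So I would phrase part (i) as: since $P\ge 0$, write $P=[v_1\ \cdots\ v_n]^*[v_1\ \cdots\ v_n]$ in $\rH\oplus\rH$ with $v_i=k_\rH(\cdot,p_i)\oplus k_\rH(\cdot,s_i^{-1}p_is_i)\bar s_i$ (cf. the operators $T_1,T_2$ of \eqref{4.6}); by \eqref{3.6} the three vectors $k_\rH(\cdot,p_1),k_\rH(\cdot,p_2),k_\rH(\cdot,p_3)$ are right-linearly dependent, hence $v_1,v_2,v_3$ satisfy one right-linear relation in the first coordinate, and the second coordinates then satisfy the corresponding relation—forcing one relation on $v_1,v_2,v_3$, hence on the top three rows of $P$.

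For part (ii): once $k_\rH(\cdot,p_3)$ is the explicit right-combination \eqref{3.6} of $k_\rH(\cdot,p_1)$ and $k_\rH(\cdot,p_2)$, I apply the functional "evaluate a solution," but there is no solution a priori; instead I use the factorization directly. Pairing: the $(3,3)$ entry $P_{33}=\langle v_3,v_3\rangle$, and using the relation $k_\rH(\cdot,p_3)=k_\rH(\cdot,p_1)c_1+k_\rH(\cdot,p_2)c_2$ with $c_1=(I_1-I_2)^{-1}(I_3-I_2)$, $c_2=(I_1-I_2)^{-1}(I_1-I_3)$, the block structure of $P$ being positive semidefinite with a genuine linear relation among $v_1,v_2,v_3$ forces the same relation in the second coordinate: $k_\rH(\cdot,s_3^{-1}p_3s_3)\bar s_3 = k_\rH(\cdot,s_1^{-1}p_1s_1)\bar s_1 c_1 + k_\rH(\cdot,s_2^{-1}p_2s_2)\bar s_2 c_2$. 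Evaluating both sides at $p=0$ kills all but the constant term and yields $\bar s_3 = \bar s_1 c_1 + \bar s_2 c_2$; conjugating gives $s_3 = \bar c_1 s_1 + \bar c_2 s_2$, and since $\overline{(I_1-I_2)^{-1}(I_3-I_2)}=(I_2-I_3)(I_2-I_1)^{-1}\cdot(-1)\cdots$—after simplifying the conjugates of the purely imaginary $I_\ell$ (using $\bar I_\ell=-I_\ell$) one lands on precisely \eqref{4.12}. The main obstacle is the bookkeeping of left versus right scalars: I must track whether the dependence among the $v_i$ lives on the left or the right, and make sure that "a right-linear relation among the first coordinates forces the same right-linear relation among the second coordinates" is legitimate—this is where I will use that $P\ge 0$ (so $\langle v_3 - v_1 c_1 - v_2 c_2,\ v_3 - v_1 c_1 - v_2 c_2\rangle\ge 0$ expands to a combination of entries of $P$ that, by the already-established row dependence, vanishes, hence the vector $v_3 - v_1 c_1 - v_2 c_2$ is zero in $\rH\oplus\rH$, giving both coordinate identities simultaneously). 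Everything else is the routine conjugation identities for unit imaginary quaternions.
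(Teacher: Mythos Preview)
Your overall idea matches the paper's exactly: the identity \eqref{3.6} kills the ``first component'' built from the $k_\rH(\cdot,p_i)$'s, and the positivity $P\ge 0$ then forces the ``second component'' built from the $s_i$'s to satisfy the same relation, which at $p=0$ reads as \eqref{4.12}. However, your write-up contains a concrete error that, as stated, breaks the argument.

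The mistake is the claimed factorization $P=[v_1\cdots v_n]^*[v_1\cdots v_n]$ with $v_i=k_\rH(\cdot,p_i)\oplus k_\rH(\cdot,s_i^{-1}p_is_i)\bar s_i$ in the Hilbert space $\rH\oplus\rH$. The (positive) Gram matrix of those vectors is $P_1+P_2$, not $P$, where $P_1=[\langle k_\rH(\cdot,p_j),k_\rH(\cdot,p_i)\rangle]$ and $P_2=[s_i k_\rH(s_i^{-1}p_is_i,s_j^{-1}p_js_j)\bar s_j]$. What $P$ actually equals is the \emph{indefinite} $J$-Gram matrix of the $v_i$'s, i.e.\ $P=P_1-P_2$. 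This matters because your later step ``$\langle v_3-v_1c_1-v_2c_2,\,v_3-v_1c_1-v_2c_2\rangle=0\Rightarrow$ the vector is $0$'' is only valid for a definite inner product, which is precisely the one that does \emph{not} reproduce the entries of $P$. As written, the argument is circular: you assume the second coordinates satisfy the relation in order to get the full dependence of $v_1,v_2,v_3$, which is what you are trying to prove.

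The fix is exactly what the paper does: keep $P_1$ and $P_2$ separate. By \eqref{3.6} the row vector $\mathbf{x}=\bigl[(I_1-I_2)^{-1}(I_2-I_3)\ \ (I_1-I_2)^{-1}(I_3-I_1)\ \ 1\ \ 0\ \cdots\ 0\bigr]$ annihilates $P_1$. Since $P_1\ge 0$, $P_2\ge 0$ and $P=P_1-P_2\ge 0$, one gets $0\le \mathbf{x}P\mathbf{x}^*=-\mathbf{x}P_2\mathbf{x}^*\le 0$, hence $\mathbf{x}P_2=0$ and $\mathbf{x}P=0$ (this is the row dependence). From the Stein relation $P_2=TP_2T^*+NN^*$ (or directly from $\mathbf{x}P_2\mathbf{x}^*=\sum_k|\mathbf{x}T^kN|^2$) one reads off $\mathbf{x}N=0$, which is \eqref{4.12}. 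Phrased your way: with the $J$-form, $\mathbf{x}P\mathbf{x}^*=\|u_3-u_1c_1-u_2c_2\|^2-\|w_3-w_1c_1-w_2c_2\|^2$; the first term is zero by \eqref{3.6}, $P\ge 0$ forces the second to be zero, and evaluating $w_3=w_1c_1+w_2c_2$ at $p=0$ gives $\bar s_3=\bar s_1c_1+\bar s_2c_2$, i.e.\ \eqref{4.12}. So your plan works once you replace the incorrect Hilbert-Gram claim by the splitting $P=P_1-P_2$.
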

\begin{proof}
Let us define two positive semidefinite  matrices
$$
P_1={\displaystyle\sum_{k=0}^\infty T^kEE^*T^{*k}}\quad\mbox{and}\quad
P_2={\displaystyle\sum_{k=0}^\infty T^kNN^*T^{*k}}
$$
and observe that
\begin{equation}
P_1=TP_1T^*+EE^*, \quad P_2=TP_2T^*+NN^*\quad\mbox{and}\quad P=P_1-P_2.
\label{4.13}
\end{equation}
The matrix $P_1$ can be written more explicitly as
$$
P_1=\left[k_{\rH}(p_i,p_j)\right]_{i,j=1}^n=\left[\left\langle k_{\rH}(\cdot, p_j), \,
k_{\rH}(\cdot, p_i)\right\rangle\right]_{i,j=1}^n
$$
and is, therefore, the gram matrix of the set $\{k_{\rH}(\cdot, p_i)\}_{i=1}^n$. Due to identity
\eqref{3.6}, the three top rows in $P$
are
left linearly dependent and moreover,
$$
{\bf x}P_1=0,\quad\mbox{where}\quad {\bf x}=\begin{bmatrix}(I_1-I_2)^{-1}(I_2-I_3) &
(I_1-I_2)^{-1}(I_3-I_1)& 1 & 0 & \ldots & 0\end{bmatrix}.
$$
Since $P=P_1-P_2\ge 0$, it also follows that ${\bf x}P_2=0$ and therefore, by the second relation in
\eqref{4.13}, ${\bf x}N=0$.
Substituting explicit formulas for ${\bf x}$ and $N$ into the latter equality gives \eqref{4.12}.
\end{proof}
Comparing \eqref{4.12} and \eqref{3.7} shows that condition $P\ge 0$ indeed guarantees that the
target value $s_3$ at $p_2$ for the
unknown
slice regular interpolant is consistent with its values at $p_1$ and $p_2$. This advances us toward
establishing the "if" part in Theorem
\ref{T:1.2}: now it suffices to prove Theorem \ref{T:1.2} under the assumption  ${\bf (A)}$.
\begin{lemma}
Let us assume that ${\bf (A)}$ holds and the Pick matrix $P\ge 0$ is singular. Then the problem
$\PR$ has at most one solution which, if
exists, is given by the formula
\begin{equation}
S(p)=R\star Q(p)^{-\star},
\label{4.14}
\end{equation}
where
\begin{equation}
R=\sum_{i=1}^nk_{\rH}(\cdot,p_i)\alpha_i,\quad Q=\sum_{i: s_i\neq 0}k_{\rH}(\cdot,s_i^{-1}
p_i s_i)\overline{s}_i\alpha_i
\label{4.15}
\end{equation}
and where ${\bf y}=\left[\begin{smallmatrix} \alpha_1 \\
\vdots \\ \alpha_n\end{smallmatrix}\right]\in{\mathbb H}^n$ is any nonzero vector such that $P{\bf
y}=0$.
\label{L:4.7}
\end{lemma}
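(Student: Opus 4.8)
The plan is to fix an arbitrary solution $S$ of $\PR$ and show that it must equal $R\star Q^{-\star}$; since the latter function depends only on the interpolation data and on the chosen null vector ${\bf y}$ of $P$, this simultaneously gives the explicit formula \eqref{4.14} and the at-most-one-solution assertion. First, by Theorem \ref{T:2.3} the kernel $\widehat K_S$ of \eqref{4.1} is positive on $\B\times\B$, so in particular, evaluating at a single point $q$, the Hermitian matrix
\[
\widehat K_S(q,q)=\begin{bmatrix} P & B^S(q)^* \\ B^S(q) & K_S(q,q)\end{bmatrix}
\]
is positive semidefinite for every $q\in\B$.

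The second step is the bookkeeping identity $B^S(q){\bf y}=R(q)-(S\star Q)(q)$ for all $q\in\B$: this follows from the factorization $B^S_i=k_{\rH}(\cdot,p_i)-S\star k_{\rH}(\cdot,s_i^{-1}p_is_i)\bar s_i$ recorded in the proof of Theorem \ref{T:2.3} (the $\star$-term being absent when $s_i=0$), from the definitions \eqref{4.15} of $R$ and $Q$, and from right-linearity of $\star$-multiplication in its second factor, upon summing against the $\alpha_i$. Now compress $\widehat K_S(q,q)\ge 0$ by $\mathrm{diag}({\bf y}^*,1)$: using $P{\bf y}=0$, hence ${\bf y}^*P{\bf y}=0$, we obtain $\left[\begin{smallmatrix} 0 & \overline{g(q)}\\ g(q) & K_S(q,q)\end{smallmatrix}\right]\ge 0$ with $g(q):=B^S(q){\bf y}$. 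Since a Hermitian positive semidefinite matrix with a zero diagonal entry has the whole corresponding row equal to zero, $g(q)=0$ for every $q\in\B$, i.e. $S\star Q\equiv R$ on $\B$.

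It then remains to invert $Q$. If $Q\equiv 0$ then $R=S\star Q\equiv 0$, i.e. $\sum_{i=1}^n k_{\rH}(\cdot,p_i)\alpha_i\equiv 0$; under assumption ${\bf (A)}$, Proposition \ref{R:3.0} gives right linear independence of the $k_{\rH}(\cdot,p_i)$, forcing ${\bf y}=0$, a contradiction. Hence $Q\not\equiv 0$, and since $Q^c\star Q$ agrees on $(-1,1)$ with $|Q(x)|^2$ it is not identically zero; thus $Q^{-\star}=(Q^c\star Q)^{-1}Q^c$ is slice regular on $\B$ away from the discrete zero set of $Q^c\star Q$, with $Q\star Q^{-\star}\equiv 1$ there. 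Right $\star$-multiplying $S\star Q=R$ by $Q^{-\star}$ and using associativity yields $S=R\star Q^{-\star}$ off that set, hence on all of $\B$ by the identity principle, which proves both claims. The conceptual crux is turning the positivity of \eqref{4.1} together with the singularity of $P$ into the functional identity $S\star Q\equiv R$; the steps that most need care are then the identity $B^S(q){\bf y}=R(q)-(S\star Q)(q)$ bridging the abstract kernel condition with the concrete functions \eqref{4.15}, and the non-vanishing of $Q$, which is precisely where assumption ${\bf (A)}$ is used. The matrix compression and the algebra of $\star$-inverses from Section 2 are routine.
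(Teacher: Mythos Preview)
Your proof is correct and follows essentially the same route as the paper's: both use the positivity of the $2\times 2$ block matrix $\widehat K_S(p,p)$ together with $P{\bf y}=0$ to force $B^S(p){\bf y}\equiv 0$, then unwind this into $S\star Q\equiv R$ via the decomposition $B_i^S=k_{\rH}(\cdot,p_i)-S\star k_{\rH}(\cdot,s_i^{-1}p_is_i)\bar s_i$, invoke Proposition~\ref{R:3.0} under assumption~${\bf (A)}$ to get $R\not\equiv 0$ (hence $Q\not\equiv 0$), and finally pass to $S=R\star Q^{-\star}$ on the complement of the zero set of $Q^c\star Q$ and extend. Your presentation is slightly more explicit about the compression step and the $\star$-inverse algebra, but there is no substantive difference in strategy.
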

\begin{proof}
Let us assume that $S$ is a solution to the problem $\PR$. Then the matrix
$$\begin{bmatrix} P & B^S(p)^* \\  B^S(p) & \frac{1-|S(p)|^2}{1-|p|^2}\end{bmatrix}\ge 0
\quad\mbox{for all}\quad p\in\mathbb B,
$$
is positive
semidefinite for every $p\in\mathbb B$.
From this positivity and from the equality $P{\bf y}=0$ we conclude that $B^S(p){\bf y}\equiv 0$. Making
use of the formula \eqref{4.2} for $B^S$,
we write the latter identity  more explicitly as
\begin{align*}
0\equiv \sum_{i=1}^n\sum_{k=0}^\infty p^k\left(1-S(p)\overline{s}_i\right)\overline{p}_i^k
\alpha_i&=
\sum_{i=0}^n k_{\rH}(p,p_i)\alpha_i -\sum_{s_i\neq 0}\sum_{k=0}^\infty
p^kS(p)\left(\overline{s_i^{-1}p_is_i}\right)^k\overline{s}_i\alpha_i\\
&=R(p)-S\star Q(p)
\end{align*}
where the last step follows by formulas \eqref{4.15} and the definition of the $\star$-product. Thus
any solution $S$ to the problem
$\PR$ must satisfy
\begin{equation}
S\star Q(p)=R(p)\quad\mbox{for all}\quad p\in{\mathbb B}.
\label{4.16}
\end{equation}
By Proposition \ref{R:3.0} and due to assumption $({\bf A})$, the
function $R$ is not vanishing identically. Then it follows from
\eqref{4.16} that $Q$ is not vanishing identically as well.
Therefore, the formula \eqref{4.14} holds (first on an open
subset of $\mathbb B$ and then by continuity on the whole
$\mathbb B$, since $S$ is assumed to be in $\mathcal R(\mathbb B,\overline{\mathbb B})$).
So the solution (if exists) is unique, and this uniqueness
implies in particular, that the representation \eqref{4.14} does
not depend on the particular choice of ${\bf y}\in{\rm Ker} \, P$.
\end{proof}
It seems tempting to verify directly that the function $S$
defined in \eqref{4.14} belongs to $\mathcal R(\mathbb B,\overline{\mathbb B})$
and satisfies interpolation conditions \eqref{1.5}. The first part can be
achieved easily using the extension arguments (similar to those
used in the proof of Theorem \ref{T:2.6} below). Verification of
interpolation equalities is much harder, so the existence part
will be proven in Section 6 using the reduction method.

\section{The indeterminate case}
\setcounter{equation}{0}

In this section we handle the case where the Pick matrix $P$ of the problem $\PR$ is positive
definite.
By Lemma \ref{L:3.6} this may occur only if none three of the interpolation nodes belong to the same
$2$-sphere. On the other hand, Lemma \ref{L:4.7} tells us that this is the only option for the
indeterminacy.
We will show that in this case the problem  $\PR$ indeed has infinitely many solutions and we will
describe all solutions in terms of a linear fractional formula. Thus, assuming that $P$ is positive
definite
and making use of notation \eqref{4.9}, we introduce the $2\times 2$ matrix-valued function
\begin{equation}
\Theta(p)={\bf I}_2+(p-1)\sum_{k=0}^\infty p^k \begin{bmatrix}E^* \\ N^*\end{bmatrix}T^{*k}
P^{-1}({\bf I}_n-T)^{-1}\begin{bmatrix}
E & -N\end{bmatrix}\label{5.1}
\end{equation}
which is clearly slice regular in $\B$.
\begin{proposition}
Under assumptions (\ref{4.8}) and (\ref{4.9}) in Remark \ref{R:4.4}, let $\Theta$ be defined by
formula \eqref{5.1} and let
\begin{equation}
\Theta(z)=\begin{bmatrix}\Theta_{11}(p) & \Theta_{12}(p) \\ \Theta_{21}(p)&
\Theta_{22}(p)\end{bmatrix}
\quad\mbox{and}\quad J=\begin{bmatrix}1 & 0 \\ 0 & -1\end{bmatrix}.
\label{5.2}
\end{equation}
Then the kernel
\begin{equation}
K_{\Theta,J}(p,q)=\sum_{k=0}^\infty p^k\left(J-\Theta(p)J\Theta(q)^*\right)\bar q^k
\label{5.3}
\end{equation}
is positive on $\B\times\B$. Furthermore, $|\Theta_{22}(p)|>1$ for every $p\in\B$ and the functions
$\Theta_{22}^{-\star}$ and $\Theta_{22}^{-\star}\star \Theta_{21}$ are both in $\mathcal R(\mathbb
B,\overline{\mathbb B})$.
\label{P:2.5}
\end{proposition}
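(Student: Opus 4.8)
The plan is to verify the positivity of the kernel $K_{\Theta,J}$ directly from a Stein-type identity satisfied by $\Theta$, in the spirit of Potapov's method, and then to extract the scalar consequences about $\Theta_{22}$ by specializing the kernel inequality. First I would establish the fundamental identity linking $\Theta$ to the Pick matrix: writing $C = \left[\begin{smallmatrix} E^* \\ N^* \end{smallmatrix}\right]$ and $G = \left[\begin{smallmatrix} E & -N\end{smallmatrix}\right]$, so that $G J G^* = E E^* - N N^* = P - T P T^*$ by Remark \ref{R:4.4}, a power-series manipulation of \eqref{5.1} should yield
\begin{equation}
J - \Theta(p) J \Theta(q)^* = (1-p)\, C_\star(p)\, P^{-1}\left(P - T P T^*\right) P^{-1}\, C_\star(q)^* \,(1-\bar q) + \text{(cross terms)},\notag
\end{equation}
where $C_\star(p) = \sum_{k\ge 0} p^k C ({\bf I}_n - T)^{-1} T^{*k}$ abbreviates the vector appearing in \eqref{5.1}. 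More usefully, I expect that after multiplying by $\sum p^k(\cdot)\bar q^k$ and summing, the Stein equation $P - TPT^* = GJG^*$ forces the telescoping collapse
\begin{equation}
K_{\Theta,J}(p,q) = \sum_{k=0}^\infty p^k C({\bf I}_n-T)^{-1} T^{*k}\, P^{-1}\, T^j ({\bf I}_n - T^*)^{-1} C^* \bar q^j \succeq 0,\notag
\end{equation}
i.e. $K_{\Theta,J}(p,q) = D(p) P^{-1} D(q)^*$ for an explicit $\mathbb H^{2\times n}$-valued slice regular $D$; positivity is then immediate from Proposition \ref{P:2.1}(1) (or its slice version) since $P^{-1}\succ 0$. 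The bookkeeping with $({\bf I}_n - T)^{-1}$ and the noncommutativity of the quaternionic entries is the delicate part, but it is a finite computation.

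Given $K_{\Theta,J}\succeq 0$, the scalar claims follow by restriction. Taking the $(2,2)$ entry on the diagonal $p=q$ gives
\begin{equation}
\sum_{k=0}^\infty |p|^{2k}\left(|\Theta_{21}(p)|^2 - |\Theta_{22}(p)|^2 + 1\right) = \frac{1 + |\Theta_{21}(p)|^2 - |\Theta_{22}(p)|^2}{1-|p|^2} \ge 0,\notag
\end{equation}
so $|\Theta_{22}(p)|^2 \ge 1 + |\Theta_{21}(p)|^2 \ge 1$; strictness of the inequality $|\Theta_{22}(p)|>1$ should come from $P$ being positive \emph{definite} (so $P^{-1}\succ 0$ and the relevant quadratic form is strictly positive — one checks the $(2,2)$ diagonal block of $D(p)P^{-1}D(p)^*$ cannot vanish, using that the first row of $D(p)$ does not annihilate $P^{-1}$ on the relevant vector, e.g. by looking at $p=0$). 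In particular $\Theta_{22}(p)\neq 0$ for all $p\in\B$, so $\Theta_{22}^{-\star}$ is defined on all of $\B$; by formula \eqref{2.4}, $\Theta_{22}^{-\star}(p) = \Theta_{22}(\widetilde p)^{-1}$ where $\widetilde p\in[p]\subset\B$, whence $|\Theta_{22}^{-\star}(p)| = |\Theta_{22}(\widetilde p)|^{-1} < 1$, giving $\Theta_{22}^{-\star}\in\mathcal R(\mathbb B,\overline{\mathbb B})$.

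For the last assertion, that $\Theta_{22}^{-\star}\star\Theta_{21}\in\mathcal R(\mathbb B,\overline{\mathbb B})$, I would feed the kernel $K_{\Theta,J}\succeq 0$ into Proposition \ref{P:2.1}(3): the $2\times 2$ matrix kernel being positive means in particular that for the constant "column" $A = \left[\begin{smallmatrix} 0 \\ 1\end{smallmatrix}\right]$-type operations one can form $\bigl(A_1 \star K_{\Theta,J}\star_r \overline{A_2}\bigr)$; more directly, the positivity of $\sum p^k (J - \Theta J \Theta^*)\bar q^k$ says precisely that the $2\times 2$ multiplier $\Theta$ maps the $J$-metric contractively, and a standard Schur-complement argument (as in the proof of Theorem \ref{T:2.3}) shows that positivity of $\left[\begin{smallmatrix} 1 - \Theta_{21}\bar\Theta_{21} & \ast \\ \ast & |\Theta_{22}|^2 - 1\end{smallmatrix}\right]$-type kernels, combined with $\Theta_{22}^{-\star}$ being a bounded multiplier, yields that $K_S(p,q)$ is positive for $S := \Theta_{22}^{-\star}\star\Theta_{21}$. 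Then Theorem \ref{T:3.1}, implication $(3)\Rightarrow(1)$, gives $S\in\mathcal R(\mathbb B,\overline{\mathbb B})$. The main obstacle I anticipate is the first step — pushing the telescoping Stein identity through the quaternionic noncommutativity to get the clean factorization $K_{\Theta,J} = D P^{-1} D^*$ — since everything else is a routine specialization of results already proved in Sections 2–4.
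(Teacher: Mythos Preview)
Your overall strategy matches the paper's: establish the factorization $K_{\Theta,J}(p,q)=D(p)P^{-1}D(q)^*$ from the Stein identity \eqref{4.8}, then read off the scalar statements from the $(2,2)$ block. Two small corrections and one simplification are worth noting.

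First, in the factorized form the $({\bf I}_n-T)^{-1}$ factors cancel: the paper obtains simply
\[
K_{\Theta,J}(p,q)=\Bigl(\sum_{k\ge 0}p^k\begin{bmatrix}E^*\\ N^*\end{bmatrix}T^{*k}\Bigr)P^{-1}\Bigl(\sum_{k\ge 0}T^k\begin{bmatrix}E & N\end{bmatrix}\bar q^k\Bigr),
\]
so your $D$ should not carry $({\bf I}_n-T)^{-1}$. Second, your displayed diagonal formula has a sign slip: the $(2,2)$ entry of $J-\Theta J\Theta^*$ is $-1-|\Theta_{21}|^2+|\Theta_{22}|^2$, so positivity gives $|\Theta_{22}|^2\ge 1+|\Theta_{21}|^2$ directly (your conclusion is right, the intermediate inequality is reversed).

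For the last two claims the paper is cleaner than your separate treatments. Rather than invoking \eqref{2.4} for $\Theta_{22}^{-\star}$ and then a vague Schur-complement argument for $\Theta_{22}^{-\star}\star\Theta_{21}$, the paper takes the scalar $(2,2)$ kernel $K^{22}_{\Theta,J}\succeq 0$ and $\star$-conjugates it by $f=\Theta_{22}^{-\star}$ via Proposition~\ref{P:2.1}(3), obtaining in one stroke
\[
\sum_{k\ge 0}p^k\bigl(1-f(p)\overline{f(q)}-g(p)\overline{g(q)}\bigr)\bar q^k\succeq 0,\qquad g=f\star\Theta_{21},
\]
from which both $K_f\succeq 0$ and $K_g\succeq 0$ follow, placing $f$ and $g$ in $\mathcal R(\mathbb B,\overline{\mathbb B})$ simultaneously by Theorem~\ref{T:3.1}.
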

\begin{proof} A straightforward computation relying  solely on the identity \eqref{4.9}
shows that
\begin{equation}
K_{\Theta,J}(p,q)=\left(\sum_{k=0}^\infty p^k \begin{bmatrix}E^* \\ N^*\end{bmatrix}T^{*k}\right)
P^{-1}\left(\sum_{k=0}^\infty T^{k} \begin{bmatrix}E & N\end{bmatrix}\bar q^k\right)
\label{5.4}
\end{equation}
from which the positivity of $K_{\Theta,J}$ follows. The bottom diagonal entry
$K^{22}_{\Theta,J}$ of this kernel equals (as is easily seen from \eqref{5.2} and
\eqref{5.3})
\begin{equation}
K^{22}_{\Theta,J}(p,q)=\sum_{k=0}^\infty p^k\left(-1-\Theta_{21}(p)\overline{\Theta_{21}(q)}
+\Theta_{22}(p)\overline{\Theta_{22}(q)}\right)\bar q^k
\label{5.5}
\end{equation}
and is also positive. Therefore,
\begin{align*}
K^{22}_{\Theta,J}(p,p)&=\sum_{k=0}^\infty
p^k\left(-1-|\Theta_{21}(p)|^2+|\Theta_{22}(p)|^2\right)\bar
p^k\\
&=\frac{-1-|\Theta_{21}(p)|^2+|\Theta_{22}(p)|^2}{1-|p|^2}\ge 0
\end{align*}
and in particular, $|\Theta_{22}(p)|>1$ for all $p\in\B$.
Therefore, its slice regular inverse $f=\Theta_{22}^{-\star}$ is
defined on $\B$ as well as the
function $g=f\star
\Theta_{21}=\Theta_{22}^{-\star}\star\Theta_{21}$. Using for now
this compact notation, observe that the kernel $f\star
K^{22}_{\Theta,J}\star_r\overline{f}$ is positive on $\B\times\B$
by Proposition \ref{P:2.1} (part (3)). According to \eqref{5.5},
this kernel equals
\begin{align*}
&f(p)\star \left(\sum_{k=0}^\infty p^k\left(-1-\Theta_{21}(p)\overline{\Theta_{21}(q)}
+\Theta_{22}(p)\overline{\Theta_{22}(q)}\right)\bar q^k\right)\star_r\overline{f(q)}\\
&=\sum_{k=0}^\infty p^kf(p)\left(-1-\Theta_{21}(p)\overline{\Theta_{21}(q)}
+\Theta_{22}(p)\overline{\Theta_{22}(q)}\right)\overline{f(q)}\bar q^k\\
&=\sum_{k=0}^\infty p^k\left(1-f(p)\overline{f(q)}-g(p)\overline{g(q)}\right)\bar q^k\succeq 0,
\end{align*}
and thus, both $f$ and $g$ are in $\mathcal R(\mathbb  B,\overline{\mathbb B})$.
\end{proof}
\begin{theorem}
Let us assume that $P>0$ and let
$\Theta=\left[\begin{smallmatrix}\Theta_{11} & \Theta_{12}\\
\Theta_{21} & \Theta_{22}\end{smallmatrix}\right]$ be defined as in \eqref{5.1}.
Then all solutions $S$ to the problem $\PR$ are given by the formula
\begin{equation}
S=(\Theta_{11}\star\cE+\Theta_{12})\star (\Theta_{21}\star\cE+\Theta_{22})^{-\star}
\label{5.6}
\end{equation}
with the free parameter $\cE$ running through the class $\mathcal R(\mathbb  B,\overline{\mathbb B})$.
\label{T:2.6}
\end{theorem}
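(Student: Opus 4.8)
The plan is to establish both inclusions: every $S$ given by \eqref{5.6} with $\cE\in\mathcal R(\B,\overline{\B})$ solves $\PR$, and conversely. The backbone is a \emph{coupling identity}. With $N:=\Theta_{11}\star\cE+\Theta_{12}$, $D:=\Theta_{21}\star\cE+\Theta_{22}$ (so $\left[\begin{smallmatrix}N\\ D\end{smallmatrix}\right]=\Theta\star\left[\begin{smallmatrix}\cE\\ 1\end{smallmatrix}\right]$) and $a:=\Theta_{11}-S\star\Theta_{21}$, $b:=\Theta_{12}-S\star\Theta_{22}$ (so $\begin{bmatrix}a & b\end{bmatrix}=\begin{bmatrix}1 & -S\end{bmatrix}\star\Theta$), the relation $S\star D=N$ --- which is exactly \eqref{5.6} once $D$ is known to be $\star$-invertible --- is equivalent to $\begin{bmatrix}1 & -S\end{bmatrix}\star\Theta\star\left[\begin{smallmatrix}\cE\\ 1\end{smallmatrix}\right]=0$, i.e. to $a\star\cE+b=0$, i.e. to the factored form $\begin{bmatrix}1 & -S\end{bmatrix}\star\Theta=a\star\begin{bmatrix}1 & -\cE\end{bmatrix}$. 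I would also record, by a power-series rearrangement using \eqref{2.2}, \eqref{4.10}, \eqref{5.3}--\eqref{5.4} and the pointwise identity $1-S(p)\overline{S(q)}=\begin{bmatrix}1 & -S(p)\end{bmatrix}J\left[\begin{smallmatrix}1\\ -\overline{S(q)}\end{smallmatrix}\right]$, the kernel identity on $\B\times\B$
\[
K_S-B^S\,P^{-1}(B^S)^*=\begin{bmatrix}1 & -S\end{bmatrix}\star\Theta\star\bigl(k_{\rH}J\bigr)\star_r\Theta^*\star_r\left[\begin{smallmatrix}1\\ -\overline{S}\end{smallmatrix}\right].
\]
Since $P>0$, Proposition~\ref{P:2.1}(2) equates the positivity of $\widehat K_S$ --- equivalently, by Theorem~\ref{T:2.3}, the statement that $S$ solves $\PR$ --- with the positivity of either side.

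\emph{Sufficiency.} Fix $\cE\in\mathcal R(\B,\overline{\B})$. First I would check that $D=\Theta_{21}\star\cE+\Theta_{22}$ is $\star$-invertible on all of $\B$: write $D=\Theta_{22}\star(1+g\star\cE)$ with $g:=\Theta_{22}^{-\star}\star\Theta_{21}$. By Proposition~\ref{P:2.5}, $\Theta_{22}$ is non-vanishing on $\B$ (indeed $|\Theta_{22}|>1$) and $g\in\mathcal R(\B,\overline{\B})$, and in fact $|g(p)|<1$ for every $p\in\B$, because the proof of Proposition~\ref{P:2.5} yields $1-|\Theta_{22}^{-\star}(p)|^2-|g(p)|^2\ge 0$ with $|\Theta_{22}^{-\star}(p)|>0$; then \eqref{2.3} gives $|g\star\cE(p)|\le|g(p)|<1$, so $1+g\star\cE$ is non-vanishing, hence $\star$-invertible, on $\B$. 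Thus $S$ defined by \eqref{5.6} lies in $\mathcal R(\B)$, the coupling identity applies, and the displayed kernel identity collapses to $K_S-B^S P^{-1}(B^S)^*=a\star K_\cE\star_r\overline{a}$, where $K_\cE$ is the kernel \eqref{3.9} with $\cE$ in place of $S$. Since $\cE\in\mathcal R(\B,\overline{\B})$, the kernel $K_\cE$ is positive by Theorem~\ref{T:3.1}, whence $a\star K_\cE\star_r\overline{a}\succeq 0$ by Proposition~\ref{P:2.1}(3); therefore $\widehat K_S\succeq 0$, and by Theorem~\ref{T:2.3}, $S$ solves $\PR$.

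\emph{Necessity.} Let $S$ solve $\PR$, so $\widehat K_S\succeq 0$ by Theorem~\ref{T:2.3}, and set $\begin{bmatrix}a & b\end{bmatrix}:=\begin{bmatrix}1 & -S\end{bmatrix}\star\Theta$. By the displayed kernel identity and Proposition~\ref{P:2.1}(2), the kernel $\begin{bmatrix}a & b\end{bmatrix}\star\bigl(k_{\rH}J\bigr)\star_r\left[\begin{smallmatrix}\overline{a}\\ \overline{b}\end{smallmatrix}\right]=\sum_{k\ge 0}p^k\bigl(a(p)\overline{a(q)}-b(p)\overline{b(q)}\bigr)\bar q^k$ is positive on $\B\times\B$; its diagonal gives $|b(p)|\le|a(p)|$ for all $p\in\B$, and $a\not\equiv 0$ (otherwise $b\equiv 0$ as well, forcing $\begin{bmatrix}1 & -S\end{bmatrix}\star\Theta\equiv 0$, impossible since by \eqref{5.1} the function $\Theta$ is $\star$-invertible off a proper closed subset of $\B$). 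On the dense open set $V:=\{p\in\B:\ (a^c\star a)(p)\ne 0\}$ define $\cE:=-a^{-\star}\star b$. The positive kernel above equals $a\star K_\cE\star_r\overline{a}$ on $V\times V$; $\star$-dividing by $a$ (Proposition~\ref{P:2.1}(3) applied to $a^{-\star}$) shows $K_\cE\succeq 0$ on $V\times V$, so by Corollary~\ref{R:3.2} $\cE$ extends to a function in $\mathcal R(\B,\overline{\B})$, and by the identity principle the extension still satisfies $a\star\cE=-b$ on all of $\B$. Hence $\begin{bmatrix}1 & -S\end{bmatrix}\star\Theta\star\left[\begin{smallmatrix}\cE\\ 1\end{smallmatrix}\right]=a\star\cE+b=0$, i.e. $N=S\star D$; since $\cE\in\mathcal R(\B,\overline{\B})$, the sufficiency argument shows $D$ is $\star$-invertible on $\B$, so $S=N\star D^{-\star}$, which is \eqref{5.6}.

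\emph{Main obstacle.} Outside the necessity step everything is positivity bookkeeping via Proposition~\ref{P:2.1} together with careful tracking of $\star$- versus pointwise products (through \eqref{2.3}--\eqref{2.4}, where the twisted argument $g(p)^{-1}pg(p)$ must be accounted for each time a kernel identity is read off pointwise). The genuinely delicate point is that the would-be denominator $a=\Theta_{11}-S\star\Theta_{21}$ is \emph{not} $\star$-invertible on all of $\B$: for a solution $S$ it vanishes at every interpolation node (already for $n=1$ a direct computation gives $a(p_i)=0$), so $\cE$ is only obtained as the $\star$-quotient $-a^{-\star}\star b$, with the common zeros of $a$ and $b$ cancelling, and is thus defined a priori only on the proper subdomain $V$. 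Promoting the scalar bound $|b|\le|a|$ to positivity of $K_\cE$ on $V\times V$, and then invoking Corollary~\ref{R:3.2} to extend $\cE$ across the nodes to a genuine element of $\mathcal R(\B,\overline{\B})$, is where the real work lies.
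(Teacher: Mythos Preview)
Your proof is correct and follows essentially the same route as the paper's: both reduce, via Theorem~\ref{T:2.3} and the Schur-complement part of Proposition~\ref{P:2.1}, to the positivity of $K_S-B^SP^{-1}(B^S)^*$, rewrite this as $\begin{bmatrix}1&-S\end{bmatrix}\star\Theta\star(k_{\rH}J)\star_r\Theta^*\star_r\left[\begin{smallmatrix}1\\-\overline S\end{smallmatrix}\right]$, factor through $\begin{bmatrix}a&b\end{bmatrix}=\begin{bmatrix}1&-S\end{bmatrix}\star\Theta$, and then (for necessity) form $\cE$ as a $\star$-quotient on an open subset and extend via Corollary~\ref{R:3.2}. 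The only differences are cosmetic: you obtain $|\Theta_{22}^{-\star}\star\Theta_{21}|<1$ from the diagonal of the kernel in the proof of Proposition~\ref{P:2.5}, whereas the paper uses the normalization $\Theta(1)=\mathbf I_2$ together with the maximum modulus principle; and you argue $a\not\equiv 0$ from generic invertibility of $\Theta$ (which in practice is again the observation $\Theta(1)=\mathbf I_2$ implicit in \eqref{5.1}), while the paper gives the explicit bound $\limsup_{r\to 1^-}|a(r)|\ge 1$. Your sign convention $\cE=-a^{-\star}\star b$ yields \eqref{5.6} directly; the paper's $\cE=u^{-\star}\star v$ differs only by $\cE\mapsto -\cE$, which is a harmless reparametrization of $\mathcal R(\B,\overline\B)$.

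One remark on your ``main obstacle'' paragraph: the assertion that $a$ vanishes at every interpolation node is not part of your argument and is in fact not generally true (neither in the classical complex case nor here); what is true, and what both proofs actually use, is only that $a$ may have zeros somewhere in $\B$, so $\cE$ must first be defined on a subdomain and then extended. This does not affect the validity of your proof.
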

\begin{proof}
By Proposition \ref{P:2.5}, the function $\Theta_{22}$ is left
$\star$--invertible and $\Theta_{22}^{-\star}\star
\Theta_{21}\in\mathcal R(\mathbb  B,\overline{\mathbb B})$. It is seen from formula \eqref{5.1}
that $\Theta$ is continuous on the closed unit ball
$\overline{\mathbb B}$ and that $\Theta(1)={\bf I}_2$. Therefore
$\Theta_{21}(1)=0$, $\Theta_{22}(1)=1$ and therefore
$\Theta_{22}^{-\star}\star \Theta_{21}$ is not a unimodular
constant. Hence, $|\Theta_{22}^{-\star}\star \Theta_{21}(p)|<1$
by the maximum modulus principle. Therefore,
$|\Theta_{22}^{-\star}\star \Theta_{21}\star \cE(p)|< 1$ for all
$p\in \B$ and for any  $\cE\in\mathcal R(\mathbb  B,\overline{\mathbb B})$. Consequently, the
function
$$
\Theta_{21}\star\cE+\Theta_{22}=\Theta_{22}\star\left(\Theta_{22}^{-\star}\star\Theta_{21}\star\cE+1\right)
$$
is $\star$--invertible and the formula \eqref{5.6} makes sense for every $\cE\in\mathcal R(\mathbb
B,\overline{\mathbb B})$.

\smallskip

By Theorem \ref{T:2.3}, a function $S: \, \mathbb B\to \mathbb H$ solves
the problem $\PR$ if and only if the kernel \eqref{4.1} is positive, which in turn
is equivalent (by part (3) in Proposition \ref{P:2.1}) to
\begin{equation}
\widetilde{K}_S(p,q):=K_S(p,q)-B^S(p)P^{-1}B^S(q)^*\succeq 0 \quad (p,q\in\mathbb B).
\label{5.7}
\end{equation}
Multiplying both parts in \eqref{5.4} by $\begin{bmatrix} 1 & -S\end{bmatrix}$  on the
left and by its adjoint on the right and taking into account \eqref{4.10} we get
$$
\begin{bmatrix} 1 & -S(p)\end{bmatrix}\star  K_{\Theta,J}(p,q)\star_r\begin{bmatrix}1  \\
-\overline{S(q)}\end{bmatrix}=B^S(p)P^{-1}B^S(q)^*.
$$
On the other hand, the kernel $K_S$ in \eqref{3.9} can be written as
$$
K_S(p,q)=\begin{bmatrix}1 & -S(p)\end{bmatrix}\star \left(\sum_{k=0}^\infty p^kJ\bar q^k\right)
\star_r
\begin{bmatrix}1  \\ -\overline{S(q)}\end{bmatrix}.
$$
Substituting the two latter representations into the right side of \eqref{5.7}
and taking into account the formula \eqref{5.4} for $K_{\Theta,J}$  gives
\begin{align}
\widetilde{K}_S(p,q)&=
\begin{bmatrix}1 & -S(p)\end{bmatrix}\star \left(
\sum_{k=0}^\infty p^k \Theta(p)J\Theta(q)^*\bar q^k\right)\star_r
\begin{bmatrix}1  \\ -\overline{S(q)}\end{bmatrix}\notag \\
&=\sum_{k=0}^\infty p^k \begin{bmatrix}1 & -S(p)\end{bmatrix}\star
\Theta(p)J\Theta(q)^*\star_r\begin{bmatrix}1 \\   -\overline{S(q)}\end{bmatrix} \bar  q^k\succeq
0.\label{5.8}
\end{align}
It remains to show that $S$ satisfies inequality \eqref{5.8} if and only if it is of the form
\eqref{5.4}
for some $\cE\in\mathcal R(\mathbb  B,\overline{\mathbb B})$. For the "only if" direction, let us assume
that \eqref{5.8} holds and  let
us introduce the functions
\begin{equation}
u=\Theta_{11}-S\star\Theta_{21}\quad\quad\mbox{and} \quad v=\Theta_{12}-S\star\Theta_{22}
\label{5.9}
\end{equation}
so that  $\begin{bmatrix} u & v\end{bmatrix}=\begin{bmatrix} 1 & -S\end{bmatrix}\star \Theta$.
Substituting the latter equality  into \eqref{5.8} and making use of the formula for $J$ in
\eqref{5.2}
we get
\begin{equation}
\widetilde{K}_{S}(p,q):=\sum_{k=0}^\infty p^k(u(p)\overline{u(q)}-v(p)\overline{v(q)})\bar
q^k\succeq 0 \quad
(p,q\in\mathbb B).
\label{5.10}
\end{equation}
Since $\Theta_{11}(1)=1$, $\Theta_{21}(1)=0$ (by formula \eqref{5.1}) and since $|S(p)|\le 1$ for
all
$p\in\mathbb B$, it follows that
$$
\limsup_{r\rightarrow 1^-}|u(r)|\ge \limsup_{r\rightarrow 1^-}
\left(|\Theta_{11}(r)|-|S(r)|\cdot |\Theta_{21}(r)|\right)=1.
$$
By continuity, $u$ is not vanishing in a real interval $[r_1, \, r_2]$ near $1$ and therefore, by
compactness, on an
open set $V\subset \mathbb B$ containing this interval. Therefore we may introduce the function
$\cE:=u^{-\star}\star v$
and rewrite \eqref{5.10} (at least for $p,q\in V$) in terms of this function as
$$
u(p)\star\left(\sum_{k=0}^\infty p^k\left(1-\cE(p)\overline{\cE(q)}\right)\bar
q^k\right)\star_r\overline{u(q)}\succeq 0
\quad (p,q\in V).
$$
The inverses $u^{-\star}$ and $u^{-\star_r}$ exist on $V$ and we conclude by part (3) in Proposition
\ref{P:2.1} that
$$
\sum_{k=0}^\infty p^k(1-\cE(p)\overline{\cE(q)})\bar q^k\succeq 0 \quad (p,q\in V).
$$
By Remark \ref{R:3.2} $\cE$ can be extended to a function from $\mathcal R(\mathbb B,\overline{\mathbb B})$,
which we still denote by $\cE$. By
the uniqueness
theorem, the equality
\begin{equation}
v=u\star\cE
\label{5.11}
\end{equation}
holds on the whole $\mathbb B$. Substituting equalities \eqref{5.9} into \eqref{5.11} gives
$$
\Theta_{12}-S\star\Theta_{22}=(\Theta_{11}-S\star\Theta_{21})\star\cE
$$
which can be written as
\begin{equation}
S\star(\Theta_{21}\star\cE +\Theta_{22})=\Theta_{11}\star\cE+\Theta_{12}.
\label{5.12}
\end{equation}
Since the function $\Theta_{21}\star\cE +\Theta_{22}$ is slice invertible for any $\cE\in\mathcal
S$,
the latter equality implies \eqref{5.6}.

\smallskip

Conversely, if $S$ is of the form \eqref{5.6} for some parameter $\cE\in \mathcal R(\mathbb
B,\overline{\mathbb B})$, then
equivalently, $S$ and $\cE$ are related as in \eqref{5.12}. This means that $u$ and $v$ defined as
in
\eqref{5.9} satisfy equality \eqref{5.11}. Then the formula \eqref{5.10} for $\widetilde{K}_S$ takes
the form
$$
\widetilde{K}_{S}(p,q)=\sum_{k=0}^\infty
p^ku(p)\left(1-\cE(p)\cE(q)^*\right)\overline{u(q)}\bar q^k=u(p)\star K_\cE(p,q)\star_r
\overline{u(q)}
$$
and is positive by Proposition \ref{P:2.1} (part (3)), since $\cE$ belongs to $\mathcal R(\mathbb
B,\overline{\mathbb B})$ so that  $K_\cE(p,q)\succeq 0$.
Thus, inequality \eqref{5.8} holds which completes the proof.
\end{proof}
\subsection{Schwarz-Pick inequalities} The goal of this subsection is to demonstrate that even
the single-point version of  Theorem \ref{2.6} provides some non-trivial information.
Let us observe that in case $n=1$, the formulas
\eqref{1.6} and \eqref{4.9} amount to
$P=\frac{1-|s_1|^2}{1-|p_1|^2}$,  $T=p_1$,  $E=1$, $N=s_1$ so that the formula
\eqref{5.1} simplifies to
\begin{equation}
\Theta(p)={\bf I}_2+(p-1)\sum_{k=0}^\infty p^k \begin{bmatrix}1 \\
\overline{s}_1\end{bmatrix}\overline{p}_1^k \frac{1-|p_1|^2}{1-|s_1|^2}(1-p_1)^{-1}
\begin{bmatrix} 1 & -s_1\end{bmatrix}.
\label{5.13}
\end{equation}
Upon specifying Theorem \ref{2.6} to the single-point case and
we conclude: {\em Given $p_1, \, s_1\in \mathbb B$, all functions
$S\in\mathcal R(\mathbb B,\mathbb B)$ mapping $p_1$ to $s_1$ are given by the formula
\eqref{5.6} where $\cE$ is the parameter from $\mathcal R(\mathbb B,\overline{\mathbb B})$ and
where $\Theta$ is given as in \eqref{5.13}.}

\smallskip

We next observe that for any $S$ of the form \eqref{5.6} with $\Theta$ given by \eqref{5.13},
\begin{align}
(S(p)-s_1)\star(1-\overline{s}_1\star
S(p))^{-\star}=&(p-p_1)\star(1-p\overline{p}_1)^{-\star}\gamma\notag\\
&\quad\star
(\cE(p)-s_1)\star (1-\overline{s}_1\star\cE(p))^{-\star}\label{5.14}
\end{align}
where we have set for short $\; \gamma=(1-\overline{p}_1)(1-p_1)^{-1}$.
Indeed, upon substituting  the linear fractional formula \eqref{5.6} for $S$ into the left
hand side of \eqref{5.14} and canceling out the factors
$(\Theta_{21}\star\cE+\Theta_{22})^{-\star}$ we get
\begin{align}
(S-s_1)\star(1-\overline{s}_1\star
S)^{-\star}=&(\Theta_{11}\star\cE+\Theta_{12}-s_1\star(\Theta_{21}\star\cE+\Theta_{22}))
\notag\\
&\quad\star (\Theta_{21}\star\cE+\Theta_{22}-\overline{s}_1
\star(\Theta_{11}\star\cE+\Theta_{12}))^{-\star}\notag\\
=&\begin{bmatrix}1 & -s_1 \end{bmatrix}\star \Theta\star \begin{bmatrix}\cE \\ 1
\end{bmatrix}
\star\left(\begin{bmatrix}-\overline{s}_1 & 1\end{bmatrix}\star \Theta\star
\begin{bmatrix}\cE \\ 1\end{bmatrix}\right)^{-\star}.\label{5.15}
\end{align}
Furthermore, it follows from \eqref{5.13} by direct verifications that
\begin{align*}
\begin{bmatrix}1 & -s_1 \end{bmatrix}\star \Theta\star \begin{bmatrix}\cE \\
1\end{bmatrix}(p)
&=(p-p_1)\star(1-p\overline{p}_1)^{-\star}\gamma\star (\cE(p)-s_1), \\
\begin{bmatrix}-\overline{s}_1 & 1\end{bmatrix}\star \Theta\star \begin{bmatrix}\cE \\
1\end{bmatrix}(p)&=1-\overline{s}_1\star\cE(p),
\end{align*}
and substituting the two last equalities into \eqref{5.15} gives \eqref{5.14}.
The Schwarz-Pick lemma for slice regular functions established recently in \cite{bist}
is an immediate consequence of \eqref{5.14}.
\begin{lemma}
For any $S\in\mathcal R(\mathbb B,\mathbb B)$ and $p_1\in\mathbb B$,
\begin{equation}
|(S(p)-S(p_1))\star(1-\overline{S(p_1)}\star
S(p))^{-\star}|\le |(p-p_1)\star(1-p\overline{p}_1)^{-\star}|
\label{5.16}
\end{equation}
with equality holding if and only if $S$ is an automorphism of $\mathbb B$.
\label{L:5.3}
\end{lemma}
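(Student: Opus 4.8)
The plan is to derive the inequality \eqref{5.16}, and determine its equality case, from the identity \eqref{5.14} together with Theorem \ref{T:2.6} in the case $n=1$ and Theorem \ref{T:3.1}. Fix $p_1\in\mathbb B$ and $S\in\mathcal R(\mathbb B,\mathbb B)$, and set $s_1=S(p_1)$. Since $S$ takes values in the open ball, $|s_1|<1$, so the ($1\times1$) Pick matrix $P=\frac{1-|s_1|^2}{1-|p_1|^2}$ is positive and Theorem \ref{T:2.6} applies: $S$ is of the form \eqref{5.6} for some $\cE\in\mathcal R(\mathbb B,\overline{\mathbb B})$, so \eqref{5.14} holds. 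Now take moduli in \eqref{5.14} at a point $p\neq p_1$ (the case $p=p_1$ being trivial, both sides of \eqref{5.16} vanishing), using the pointwise product rule \eqref{2.3}, i.e. $|(f\star g)(p)|=|f(p)|\cdot|g(\widetilde p)|$ for some $\widetilde p$ with $|\widetilde p|=|p|$. Applying this first to peel off the Blaschke factor $(p-p_1)\star(1-p\overline{p}_1)^{-\star}$, and then again to peel off the unit-modulus constant $\gamma=(1-\overline{p}_1)(1-p_1)^{-1}$, one obtains
\[
\bigl|(S(p)-s_1)\star(1-\overline{s}_1\star S(p))^{-\star}\bigr|
=\bigl|(p-p_1)\star(1-p\overline{p}_1)^{-\star}\bigr|\cdot|M(\widehat p)|
\]
for some $\widehat p\in\mathbb B$ with $|\widehat p|=|p|$, where $M:=(\cE-s_1)\star(1-\overline{s}_1\star\cE)^{-\star}$. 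Hence \eqref{5.16} follows as soon as $|M(q)|\le1$ for all $q\in\mathbb B$, i.e. as soon as $M\in\mathcal R(\mathbb B,\overline{\mathbb B})$.

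To prove $M\in\mathcal R(\mathbb B,\overline{\mathbb B})$ I would pass to multiplication operators on $\rH$ and use Theorem \ref{T:3.1}. Since $\|M_{\overline{s}_1}M_{\cE}\|\le|s_1|\,\|M_{\cE}\|\le|s_1|<1$, the operator $M_{1-\overline{s}_1\star\cE}=I-M_{\overline{s}_1}M_{\cE}$ is boundedly invertible (so $M$ is a well-defined slice regular function on $\mathbb B$) and $M_M=(M_{\cE}-M_{s_1})(I-M_{\overline{s}_1}M_{\cE})^{-1}$. The decisive fact is that multiplication by the quaternion constant $s_1$ satisfies $M_{s_1}^*=M_{\overline{s}_1}$ and $M_{s_1}M_{s_1}^*=M_{s_1}^*M_{s_1}=|s_1|^2 I$, i.e. $M_{s_1}$ is $|s_1|$ times a unitary. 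Writing $h=(I-M_{\overline{s}_1}M_{\cE})g$, the estimate $\|M_Mh\|\le\|h\|$ becomes $\|M_{\cE}g-M_{s_1}g\|\le\|g-M_{\overline{s}_1}M_{\cE}g\|$ for every $g\in\rH$; expanding both sides, the cross terms cancel because $\langle M_{\cE}g,M_{s_1}g\rangle$ and $\langle g,M_{\overline{s}_1}M_{\cE}g\rangle$ have the same real part, and the identities $\|M_{s_1}g\|^2=|s_1|^2\|g\|^2$ and $\|M_{\overline{s}_1}M_{\cE}g\|^2=|s_1|^2\|M_{\cE}g\|^2$ reduce the claim to $(1-|s_1|^2)\|M_{\cE}g\|^2\le(1-|s_1|^2)\|g\|^2$, which holds since $\|M_{\cE}\|\le1$. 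Thus $M_M$ is a contraction on $\rH$, and by Theorem \ref{T:3.1}, $M\in\mathcal R(\mathbb B,\overline{\mathbb B})$, completing the proof of \eqref{5.16}.

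For the equality statement, assume equality holds in \eqref{5.16} at some $p\neq p_1$ (equivalently, at every $p\in\mathbb B$). Since the Blaschke factor is nonzero there, the displayed identity forces $|M(\widehat p)|=1$ at an interior point, so by the maximum modulus principle $M\equiv c$ with $|c|=1$. Then $\cE-s_1=c\star(1-\overline{s}_1\star\cE)$, and comparing Taylor coefficients gives $(1+c\overline{s}_1)e_k=0$ for $k\ge1$; since $|c\overline{s}_1|=|s_1|<1$ the quaternion $1+c\overline{s}_1$ is invertible, so $e_k=0$ for $k\ge1$ and $\cE\equiv e_0$ is a constant, which must be unimodular because $|M|=|(e_0-s_1)(1-\overline{s}_1e_0)^{-1}|=1$ forces $(1-|e_0|^2)(1-|s_1|^2)=0$. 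Conversely, if $\cE$ is a unimodular constant then $M$ reduces to a unimodular constant and the displayed identity gives equality in \eqref{5.16}; and by the classification of slice regular automorphisms of $\mathbb B$ (see \cite{gss}) the functions $S$ arising via \eqref{5.6} from a unimodular constant parameter are exactly those automorphisms, which yields the stated equivalence. The main obstacle is the step $M\in\mathcal R(\mathbb B,\overline{\mathbb B})$: the classical scalar identity $1-|M|^2=(1-|s_1|^2)(1-|\cE|^2)|1-\overline{s}_1\cE|^{-2}$ fails pointwise in the noncommutative setting, but its operator-theoretic reformulation survives precisely because $M_{s_1}$ is a scalar multiple of a unitary.
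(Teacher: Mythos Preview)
Your proof is correct and follows the same overall route as the paper: apply Theorem~\ref{T:2.6} with $n=1$ to write $S$ in the form \eqref{5.6}, invoke the identity \eqref{5.14}, and reduce \eqref{5.16} to the bound $|M(q)|\le 1$ for $M=(\cE-s_1)\star(1-\overline{s}_1\star\cE)^{-\star}$. The one substantive difference is how that bound is established. The paper simply cites \cite{acs2} for the fact that $M\in\mathcal R(\mathbb B,\overline{\mathbb B})$ (with equality $|M(p)|=1$ exactly when $|\cE(p)|=1$), whereas you supply a self-contained operator-theoretic argument via Theorem~\ref{T:3.1}: the identity $M_{s_1}^*=M_{\overline{s}_1}$ together with $M_{s_1}M_{s_1}^*=|s_1|^2 I$ makes the cross terms cancel and reduces $\|M_Mh\|\le\|h\|$ to $(1-|s_1|^2)(\|g\|^2-\|M_{\cE}g\|^2)\ge 0$. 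This buys independence from the external reference at the price of a short extra computation; the paper's version is terser but relies on a result proved elsewhere. Your treatment of the equality case (deducing first that $M$ is a unimodular constant, then that $\cE$ is) is slightly more explicit than the paper's, which again leans on \cite{acs2} for the statement that $|M(p)|=1$ forces $|\cE(p)|=1$ and then invokes the maximum modulus principle directly on $\cE$.
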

\begin{proof}
The function $S$ solves the interpolation problem with the single interpolation node $p_1$ and
the target value $s_1:=S(p_1)$. Therefore, $S$ is of the form \eqref{5.6} for some
$\cE\in \mathcal R(\mathbb B,\overline{\mathbb B})$, and identity \eqref{5.14} holds
with $S(p_1)$ instead of $s_1$. Since $\cE\in\mathcal R(\mathbb B,\overline{\mathbb B})$,
we have
$$
|(\cE(p)-s_1)\star (1-\overline{s}_1\star\cE(p))^{-\star}|\le 1
$$
with equality if and only if $|\cE(p)|=1$; see \cite{acs2}. Since
$|\gamma|=|(1-\overline{p}_1)(1-p)^{-1})|=1$, we conclude from \eqref{5.14} that
inequality \eqref{5.16} holds with equality if and only if (by the maximum modulus principle)
$\cE$ is a unimodular constant function. The latter is equivalent (as it is easily seen
again from \eqref{5.14}) to $S$ be an automorphism of the unit ball. \end{proof}
\begin{remark}
Letting $s_1=0$ in formula (\ref{5.14}) we get Schwarz lemma:
If $S\in\mathcal R(\mathbb B, \mathbb B)$ vanishes at $p_0\in\mathbb B$, then $S$ is equal to the Blaschke
factor multiplied by some function $\mathcal E\in \mathcal R(\mathbb B, \overline{\mathbb B})$.
\end{remark}

\section{The determinate case}
\setcounter{equation}{0}

Still assuming that none three of
interpolation nodes belong to the same $2$-sphere, we will assume in addition  that the Pick
matrix  $P$ of the problem has rank $d<n$. In fact, it can be
shown  that any $d\times d$ principal submatrix of $P$ is
positive definite. Instead of proving this result which is beyond
the scope of this paper, we will permute indices (if necessary)
and assume without loss of generality that the {\em leading}
principal $d\times d$ submatrix of $P$ is invertible. In order to
keep notation from the previous section we proceed slightly
differently. We extend the problem $\PR$ to the problem
$\widetilde\PR$ by $k$ additional conditions
\begin{equation}
S(p_{n+i})=s_{n+i}\quad (i=1,\ldots,d)
\label{6.1}
\end{equation}
still assuming that no three interpolation nodes from the extended set $\{p_1,\ldots,p_{n+d}\}$
belong
to the  same
$2$-sphere, that the Pick matrix of the extended problem $\widetilde\PR$ (with interpolation
conditions
\eqref{1.5} {\em and} \eqref{6.1}) is positive semidefinite
\begin{equation}
{\mathbb P}=\begin{bmatrix}P & P_1^* \\ P_1 & P_2\end{bmatrix}
=\left[\sum_{k=0}^\infty p_i^k(1-s_i\bar s_j)\bar p_j^k\right]_{i,j=1}^{n+d}\ge 0,
\label{6.2}
\end{equation}
and that
\begin{equation}
{\rm rank} \, {\mathbb P}={\rm rank} \, P=n.
\label{6.3}
\end{equation}
\begin{theorem}
Under assumptions \eqref{6.2} and \eqref{6.3}, the problem  $\widetilde\PR$ has a unique solution.
\label{T:6.1}
\end{theorem}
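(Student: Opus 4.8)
The plan is to split off the first $d$ interpolation conditions of $\widetilde\PR$, whose Pick matrix is the invertible leading block $P_{(d)}$ of $\mathbb P$, to parametrize the corresponding solutions via Theorem~\ref{T:2.6}, and then to use the rank hypothesis \eqref{6.3} to show that the remaining $n$ conditions force the free parameter to be a single unimodular constant. Concretely, let $P_{(d)}$ denote the leading principal $d\times d$ submatrix of $\mathbb P$ (equivalently of $P$), positive definite by hypothesis, and let $\Theta=\left[\begin{smallmatrix}\Theta_{11}&\Theta_{12}\\\Theta_{21}&\Theta_{22}\end{smallmatrix}\right]$ be the $2\times 2$ slice regular function built as in \eqref{5.1} from $p_1,\ldots,p_d$ and $s_1,\ldots,s_d$. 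By Theorem~\ref{T:2.6}, the functions $S\in\mathcal R(\mathbb B,\overline{\mathbb B})$ with $S(p_i)=s_i$, $i=1,\ldots,d$, are precisely those of the form $S=(\Theta_{11}\star\cE+\Theta_{12})\star(\Theta_{21}\star\cE+\Theta_{22})^{-\star}$ with $\cE\in\mathcal R(\mathbb B,\overline{\mathbb B})$, the parameter being recovered as $\cE=u^{-\star}\star v$ from $\begin{bmatrix}u&v\end{bmatrix}=\begin{bmatrix}1&-S\end{bmatrix}\star\Theta$. Such an $S$ solves $\widetilde\PR$ precisely when it additionally satisfies $S(p_{d+j})=s_{d+j}$ for $j=1,\ldots,n$, and — using the invertibility of the constant matrix $\Theta(p_{d+j})$, which I would record as a short lemma (the quaternionic analogue of the classical invertibility of $\Theta$ on the disk, established slice by slice via the Splitting Lemma) — one checks that these are equivalent to $\cE(p_{d+j})=\eta_{d+j}$ for explicit values $\eta_{d+j}\in\overline{\mathbb B}$ obtained from $s_{d+j}$ by the inverse of the linear fractional map with coefficient matrix $\Theta(p_{d+j})$. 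Thus, under the correspondence $S\leftrightarrow\cE$, the problem $\widetilde\PR$ becomes equivalent to the auxiliary problem $\PR^{\cE}$: find $\cE\in\mathcal R(\mathbb B,\overline{\mathbb B})$ with $\cE(p_{d+j})=\eta_{d+j}$, $j=1,\ldots,n$.

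The crucial point, and the step I expect to require the most care, is to identify the Pick matrix $P^{\cE}=\bigl[\sum_{k\ge0}p_{d+i}^{\,k}(1-\eta_{d+i}\bar\eta_{d+j})\bar p_{d+j}^{\,k}\bigr]_{i,j=1}^n$ of $\PR^{\cE}$ with the Schur complement of $P_{(d)}$ in $\mathbb P$. This is the finite-dimensional, noncommutative counterpart of Potapov's transformation rule: it should come out of running, at the nodes $p_{d+1},\ldots,p_{d+n}$, the very computation that in the proof of Theorem~\ref{T:2.6} rewrote $\widetilde K_S$ as a $\star$-conjugate of $K_{\cE}$, the only subtlety being the one-sided placement of quaternionic coefficients inside the $\star$-products. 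Granting this identification, hypothesis \eqref{6.3}, namely ${\rm rank}\,\mathbb P={\rm rank}\,P_{(d)}=d$, forces the Schur complement of $P_{(d)}$ in $\mathbb P$, hence $P^{\cE}$, to vanish.

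It then remains to solve $\PR^{\cE}$ by hand. From $P^{\cE}=0$ its diagonal gives $|\eta_{d+j}|=1$ for every $j$, while the off-diagonal relations $\sum_{k\ge0}p_{d+i}^{\,k}(1-\eta_{d+i}\bar\eta_{d+j})\bar p_{d+j}^{\,k}=0$, together with assumption $({\bf A})$ for the extended node set, force $\eta_{d+1}=\cdots=\eta_{d+n}=:\eta$ for a single unimodular quaternion $\eta$ — the quaternionic analog of the elementary complex observation, obtainable by specializing the relation along a common $2$-sphere through \eqref{3.6} (or directly). Hence $\cE\equiv\eta$ solves $\PR^{\cE}$, and by Lemma~\ref{L:4.7}, applicable since $P^{\cE}=0$ is singular and $({\bf A})$ holds, it is the unique solution. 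Pulling it back through the linear fractional formula produces $S_0=(\Theta_{11}\star\eta+\Theta_{12})\star(\Theta_{21}\star\eta+\Theta_{22})^{-\star}\in\mathcal R(\mathbb B,\overline{\mathbb B})$, a solution of $\widetilde\PR$. Uniqueness follows at once from Lemma~\ref{L:4.7} applied to $\widetilde\PR$ itself: $\mathbb P\ge0$ is singular (as $d<n+d$) and no three of $p_1,\ldots,p_{n+d}$ lie on a common $2$-sphere, so $\widetilde\PR$ has at most one solution; therefore it has exactly one.
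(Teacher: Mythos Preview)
Your overall strategy---parametrize solutions of the ``invertible'' sub-problem via Theorem~\ref{T:2.6} and then analyze what the residual conditions impose on the parameter $\cE$---is exactly the paper's approach. However, there are two genuine gaps in your execution, and a notational slip.

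First (minor): you have interchanged $n$ and $d$. In the paper's setup, $\mathbb P$ is $(n+d)\times(n+d)$, its leading $n\times n$ block $P$ is the one that is positive definite, and the \emph{additional} conditions are the $d$ conditions \eqref{6.1} at $p_{n+1},\ldots,p_{n+d}$.

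Second, and more seriously, the passage ``these are equivalent to $\cE(p_{d+j})=\eta_{d+j}$'' treats the $\star$-product as pointwise multiplication. In the quaternionic setting it is not: by \eqref{2.3}, the relation $v=u\star\cE$ evaluated at a node $p_{n+i}$ gives
\[
v(p_{n+i})=u(p_{n+i})\,\cE\!\bigl(u(p_{n+i})^{-1}p_{n+i}\,u(p_{n+i})\bigr),
\]
so the interpolation condition on $\cE$ lives at the \emph{conjugated} point $u_{n+i}^{-1}p_{n+i}u_{n+i}$, not at $p_{n+i}$, and the value is $u_{n+i}^{-1}v_{n+i}$. Invertibility of the matrix $\Theta(p_{n+i})$ is not what is needed here; what is needed is $u_{n+i}:=u(p_{n+i})\neq 0$. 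Consequently your Pick matrix $P^{\cE}$ is not the Schur complement itself but the congruent matrix $U^{-1}(P_2-P_1P^{-1}P_1^*)U^{-*}$ with $U=\operatorname{diag}(u_{n+1},\ldots,u_{n+d})$; this congruence is only defined once all $u_{n+i}\neq 0$.

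This brings up the central gap: establishing $u_{n+i}\neq 0$. In the complex case this is automatic (one checks $|u|^2-|v|^2>0$ pointwise), but over $\mathbb H$ it is not, and the paper devotes the bulk of its proof to it. The paper first derives the identity
\[
u_{n+i}\overline{u}_{n+j}-v_{n+i}\overline{v}_{n+j}=(P_{2,ij}-P_{1i}P^{-1}P_{1j}^*)-p_{n+i}(P_{2,ij}-P_{1i}P^{-1}P_{1j}^*)\overline{p}_{n+j},
\]
so that the rank hypothesis \eqref{6.3} yields $u_{n+i}\overline{u}_{n+j}=v_{n+i}\overline{v}_{n+j}$ for all $i,j$ (hence $|u_{n+i}|=|v_{n+i}|$). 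It then argues by contradiction: assuming $u_{n+i}=v_{n+i}=0$, one is led via the Stein identities to the eigenvector-type relation $p_{n+i}P_{1i}P^{-1}=P_{1i}P^{-1}T$ and to $P_{1i}P^{-1}E=1$, and the assumption $({\bf A})$ on the extended node set then forces either $P_{1i}P^{-1}=0$ or $p_{n+i}=p_j$ for some $j\le n$, both impossible. Only after this does one obtain the well-defined unimodular constant $\gamma=u_{n+i}^{-1}v_{n+i}$ (common to all $i$, by $u_{n+i}\overline{u}_{n+j}=v_{n+i}\overline{v}_{n+j}$), and conclude $\cE\equiv\gamma$ by the maximum modulus principle. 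Your sketch skips this nonvanishing argument entirely; without it, the quantities $\eta_{d+j}$ and the Pick matrix $P^{\cE}$ are not even defined, and the remainder of your argument cannot get started.
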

\begin{proof}
Let $\Theta$ be defined as in \eqref{5.1}; the formula makes sense
since $P$ is invertible. Any solution $S$ to the extended problem $\widetilde\PR$
(if exists)  is also a solution to the problem $\PR$, so that it is necessarily of the form
\eqref{5.6} for some parameter $\cE\in\mathcal R(\mathbb B,\overline{\mathbb B})$.
The functions $S$ and $\cE$ are related as in \eqref{5.12} or (which is the same) as in
\eqref{5.11},
where $u$ and $v$ are defined as in \eqref{5.9}. Evaluating \eqref{5.11} at $p=p_{n+i}$ implies that
$S$ of the form \eqref{5.6} satisfies the additional interpolation conditions \eqref{6.1}
if and only if the corresponding parameter $\cE$ satisfies conditions
\begin{equation}
v(p_{n+i})=u\star \cE(p_{n+i})=u(p_{n+i})\cE(u(p_{n+i})^{-1}p_{n+i}u(p_{n+i}))
\label{6.4}
\end{equation}
for $i=1,\ldots,d$, where according to \eqref{5.9} and \eqref{6.1},
\begin{align}
u_{n+i}:=u(p_{n+i})&=\Theta_{11}(p_{n+i})-s_{n+i}\Theta_{21}(s_{n+i}^{-1}p_{n+i}s_{n+i}),\label{6.5}\\
v_{n+i}:=v(p_{n+i})&=\Theta_{12}(p_{n+i})-s_{n+i}\Theta_{22}(s_{n+i}^{-1}p_{n+i}s_{n+i}).\label{6.6}
\end{align}
Let us assume for a moment that the numbers defined in \eqref{6.5}, \eqref{6.6} are subject to
relations
\begin{equation}
|u_{n+i}|=|u_{n+i}|\neq 0,\quad u_{n+i}^{-1}v_{n+i}=u_{n+j}^{-1}v_{n+j}=\gamma\in\partial \mathbb B
\label{6.7}
\end{equation}
for all $i,j=1,\ldots,k$. We then conclude from \eqref{6.4} that in order for $S$ to be a solution
to
the
extended problem $\widetilde\PR$, it is necessary and sufficient that $S$ is of
the
form \eqref{5.6} for some $\cE\in\mathcal R(\mathbb  B,\overline{\mathbb B})$ such that
$$
\cE(u_{n+i}^{-1}p_{n+i}u_{n+i})=\gamma\quad\mbox{for}\quad i=1,\ldots,d.
$$
Since $|\gamma|=1$, it then follows by the maximum modulus principle that
a unique $\cE\in\mathcal R(\mathbb  B,\overline{\mathbb B})$ satisfying the latter conditions is the
constant function $\cE\equiv
\gamma$.

\smallskip

We now verify \eqref{6.7}. Let, in analogy to \eqref{4.9},
\begin{equation}
\widetilde T=\left[\begin{array}{ccc}p_{n+1} &&0
\\ &\ddots & \\ 0 && p_{n+d}\end{array}\right],\quad
\widetilde E=\left[\begin{array}{c}1\\ \vdots \\ 1 \end{array}\right],\quad
\widetilde N=\left[\begin{array}{c}s_{n+1} \\ \vdots\\ s_{n+d}\end{array}\right]
\label{6.8}
\end{equation}
so that the block entries $P_1$ and $P_2$ in \eqref{6.2} can be alternatively defined as unique
solutions to
the Stein equations
\begin{equation}
P_1-\widetilde T P_1 T^*=\widetilde EE^*-\widetilde NN^*, \qquad P_2-\widetilde T P_2 \widetilde
T^*=
\widetilde E\widetilde E^*-\widetilde N\widetilde N^*.
\label{6.9}
\end{equation}
Equating the $i$-th rows in the first of the two last equalities we conclude that
the $i$-th row $P_{1i}$ of $P_1$ satisfies
\begin{equation}
P_{1i}-p_{n+i}P_{1i}T^*=E^*-s_{n+i}N^*
\label{6.10}
\end{equation}
and is recovered from this equality by the formula
\begin{equation}
P_{1i}=\sum_{k=0}^\infty p_{n+i}^k(E^*-s_{n+i}N^*)T^{*k}
\label{6.11}
\end{equation}
Similarly, one can see from the second equality in \eqref{6.9} that the $ij$-entry of $P_2$
satisfies
the linear equation
\begin{equation}
P_{2,ij}-p_{n+i}P_{2,ij}\overline{p}_{n+j}=1-s_{n+i}\overline{s}_{n+j}.
\label{6.12}
\end{equation}
We now plug in formula \eqref{5.1} into \eqref{6.5} and \eqref{6.6} and then make use of
\eqref{6.11} to get
\begin{align}
u_{n+i}&=1+(p_{n+i}-1)\cdot \sum_{k=0}^\infty
p_{n+i}^k\left(E^*-s_{n+i}N^*\right)T^{*k}P^{-1}(I-T)^{-1}E\notag \\
&=1+(p_{n+i}-1)\cdot P_{1i}P^{-1}(I-T)^{-1}E,\label{6.13}\\
v_{n+i}&=-s_{n+i}-(p_{n+i}-1)\cdot \sum_{k=0}^\infty
p_{n+i}^k\left(E^*-s_{n+i}N^*\right)T^{*k}P^{-1}(I-T)^{-1}N\notag\\
&=-s_{n+i}-(p_{n+i}-1)\cdot P_{1i}P^{-1}(I-T)^{-1}N,\label{6.14}
\end{align}
so that
\begin{align*}
\begin{bmatrix}u_{n+i} & v_{n+i}\end{bmatrix}&=
\begin{bmatrix}1 & -s_{n+i}\end{bmatrix}+(p_{n+i}-1)\cdot P_{1i}P^{-1}(I-T)^{-1}
\begin{bmatrix}E & N\end{bmatrix}
\end{align*}
We next use the two latter formulas and the formula \eqref{5.2} for $J$ to compute
\begin{align*}
&u_{n+i}\overline{u}_{n+j}-v_{n+i}\overline{v}_{n+j}=\begin{bmatrix}u_{n+i} &
v_{n+i}\end{bmatrix}J\begin{bmatrix}\overline{u}_{n+j} \\
\overline{v}_{n+j}\end{bmatrix}\notag\\
&=1-s_{n+i}\overline{s}_{n+j}+(p_{n+i}-1)\cdot P_{1i}
P^{-1}(I-T)^{-1}(E-N\overline{s}_{n+j}) \notag \\
&\qquad+(E^*-s_{n+i}N^*)(I-T^*)^{-1}P^{-1}P_{1,j}^*(\overline{p}_{n+j}-1)\notag \\
&\qquad+(p_{n+i}-1)\cdot P_{1i}P^{-1}(I-T)^{-1}\left (EE^*-NN^*\right)\notag\\
&\qquad\quad \times (I-T^*)^{-1}P^{-1}P_{1j}^*
(\overline{p}_{n+j}-1).\notag
\end{align*}
The latter expression can be further simplified due to \eqref{6.10} and the
equality
$$
P^{-1}(I-T)^{-1}\left (EE^*-NN^*\right)(I-T^*)^{-1}P^{-1}=
(I-T^*)^{-1}(P^{-1}-T^*P^{-1}T)(I-T)^{-1}
$$
which is a fairly straightforward consequence of the Stein identity \eqref{4.8}, as follows:
\begin{align}
&u_{n+i}\overline{u}_{n+j}-v_{n+i}\overline{v}_{n+j}\notag\\
&=1-s_{n+i}\overline{s}_{n+j}+(p_{n+i}-1)\cdot P_{1i}
P^{-1}(I-T)^{-1}(P_{ij}^*-TP_{1j}^*\overline{p}_{n+j}) \notag \\
&\qquad+(P_{1i}-p_{n+i}P_{1i}T^*)(I-T^*)^{-1}P^{-1}P_{1,j}^*(\overline{p}_{n+j}-1)\notag \\
&\qquad +(p_{n+i}-1)\cdot P_{1i}
(I-T^*)^{-1}(P^{-1}-T^*P^{-1}T)(I-T)^{-1}P_{1j}^*(\overline{p}_{n+j}-1).\notag
\end{align}
Further simplification follows thanks to \eqref{6.12} and the equality
$$
(P_{1i}-p_{n+i}P_{1i}T^*)(I-T^*)^{-1}=p_{n+i}P_{1i}-(p_{n+i}-1)P_{1i}(I-T^*)^{-1}.
$$
We get
\begin{align}
u_{n+i}\overline{u}_{n+j}-v_{n+i}\overline{v}_{n+j}=
&1-s_{n+i}\overline{s}_{n+j}+p_{n+i}P_{1i}P^{-1}P_{1j}^*(\overline{p}_{n+j}-1)\notag\\
&+(p_{n+i}-1)P_{1i}P^{-1}P_{1j}^*\overline{p}_{n+j}\notag \\
&-(p_{n+i}-1)P_{1i}P^{-1}P_{1j}^*(\overline{p}_{n+j}-1)\notag\\
=& P_{2,ij}-p_{n+i}P_{2,ij}\overline{p}_{n+j}+p_{n+i}P_{1i}P^{-1}P_{1j}^*\overline{p}_{n+j}
-P_{1i}P^{-1}P_{1j}^*\notag\\
=&P_{2,ij} -P_{1i}P^{-1}P_{1j}^*-p_{n+i}(P_{2,ij} -P_{1i}P^{-1}P_{1j}^*)\overline{p}_{n+j}.
\label{6.15}
\end{align}
Due to factorization
$$
{\mathbb P}=\begin{bmatrix}I_n &0 \\ P_1 P^{-1}& I_d\end{bmatrix}
\begin{bmatrix}P & 0 \\ 0 & P_2-P_1 P^{-1}P_1^*\end{bmatrix}
\begin{bmatrix}I_n & P^{-1}P_1 \\ 0 & I_d\end{bmatrix},
$$
the rank condition \eqref{6.3} implies $P_2=P_1 P^{-1}P_1^*$ or entry-wise,
$$
P_{2,ij}=P_{1i}P^{-1}P_{1j}\quad\mbox{for}\quad i,j=1,\ldots,d
$$
which together with \eqref{6.15} implies
\begin{equation}
u_{n+i}\overline{u}_{n+j}=v_{n+i}\overline{v}_{n+j}\quad\mbox{for}\quad i,j=1,\ldots,d.
\label{6.16}
\end{equation}
Letting $i=j$ in the latter equalities gives $|u_{n+i}|=|v_{n+i}|$ for $i=1,\ldots,d$.
To show that $u_{n+i}$ and $v_{n+i}$ are nonzero we will argue via contradiction.
Assuming that
\begin{equation}
u_{n+i}=v_{n+i}=0.
\label{6.17}
\end{equation}
for some $i\in\{1,\ldots,d\}$ we then get
\begin{align}
0=&u_{n+i}E^*+v_{n+i}N^*\notag\\
=&E^*-s_{n+i}N^*+(p_{n+i}-1)\cdot P_{1i}P^{-1}(I-T)^{-1}(EE^*-NN^*)\notag\\
=&P_{1i}-p_{n+i}P_{1i}T^*+(p_{n+i}-1)\cdot P_{1i}P^{-1}(I-T)^{-1}(P-TPT^*)\notag\\
=&\left(p_{n+i}P_{1i}P^{-1}-P_{1i}P^{-1}T\right)(I-T)^{-1}P(I-T^*).\label{6.18}
\end{align}
We remark that the second equality in the latter computation follows from
formulas \eqref{6.13}, \eqref{6.14},  the third equality is a consequence of
relations \eqref{4.8} and \eqref{6.10} while the last equality is easily verified directly.
Since the matrices $P$ and $I-T$ are invertible (recall that $P$ is Hermitian and $T$ is diagonal),
it follows from \eqref{6.18} that
\begin{equation}
p_{n+i}P_{1i}P^{-1}=P_{1i}P^{-1}T.
\label{6.19}
\end{equation}
Substituting the latter equality into \eqref{6.13}, \eqref{6.14} results in
$$
u_{n+i}1=1+P_{1i}P^{-1}T(I-T)^{-1}E- P_{1i}P^{-1}(I-T)^{-1}E=1-P_{1i}P^{-1}E
$$
which being combined with the assumption \eqref{6.17}, leads us to
\begin{equation}
P_{1i}P^{-1}E=1.
\label{6.20}
\end{equation}
Let ${\bf e}_j$ denote the $i$-th column in the identity matrix ${\bf I}_n$.
Multiplying both sides in
\eqref{6.18} by ${\bf e}_j$ on the right and making use of the diagonal structure \eqref{4.9}
of $T$ we get
\begin{equation}
p_{n+i}P_{1i}P^{-1}{\bf e}_j=P_{1i}P^{-1}{\bf e}_jp_j\quad\mbox{for}\quad j=1,\ldots,n.
\label{6.21}
\end{equation}
Therefore,
\begin{equation}
P_{1i}P^{-1}{\bf e}_j=0,\quad\mbox{whenever}\quad p_{j}\not\in [p_{n+i}].
\label{6.22}
\end{equation}
Due to the assumption that  no three points from the set $\{p_1,\ldots,p_{n+d}\}$
belong to the  same $2$-sphere, the intersection of the set $\{p_1,\ldots,p_n\}$
with the $2$-sphere $[p_{n+i}]$ is either empty or a singleton. We will show that either case
leads to a contradiction.

\smallskip

{\bf Case 1.} If $p_j\not\in[p_{n+i}]$  for all $j=1,\ldots,n$, then it follows from \eqref{6.22}
that $P_{1i}P^{-1}=0$ which contradicts to \eqref{6.20}.

\smallskip

{\bf Case 2.} Without loss of generality we assume that $p_1\in[p_{n+i}]$ and
$p_j\not\in[p_{n+i}]$ for $j=2,\ldots,n$. Therefore, equalities \eqref{6.22} hold for
all $j=2,\ldots,n$ and then we conclude from \eqref{6.20}
$$
1=P_{1i}P^{-1}E=P_{1i}P^{-1}({\bf e}_1+{\bf e}_2+\ldots+{\bf e}_n)=P_{1i}P^{-1}{\bf e}_1.
$$
Due to this latter relation, the equality \eqref{6.21} for $j=1$ simplifies to
$p_{n+i}=p_1$ which contradicts to the assumption that all interpolation nodes are distinct.

\smallskip

The derived contradictions show that equalities \eqref{6.17} cannot be in force which
completes the proof of the first part in \eqref{6.7}. Once we know that $u_{n+1}\neq 0$,
the second part in \eqref{6.7} follows from  \eqref{6.16}.
\end{proof}

\bigskip

\bibliographystyle{amsplain}

\end{document}